\newcommand{\eqref}[1]{(\ref{#1})}
\newtheorem{proposition}{Proposition}[section]
\newtheorem{lemma}[proposition]{Lemma}
\newtheorem{lemmaa}{Lemma}[section]
\newtheorem{theorem}[proposition]{Theorem}
\newtheorem{corollary}[proposition]{Corollary}
\def\La{\Lambda}
\def\ep{\varepsilon}
\newcommand{\wt}{\widetilde}
\def\R{{\mathbb R}}
\def\E{{\mathbb E}}
\def\P{{\mathbb P}}
\begin{document}
\begin{frontmatter}

%\dochead{}
\title{The mean Euler characteristic and excursion probability of
Gaussian random fields with stationary increments\thanksref{T1}}
\thankstext{T1}{Supported in part by NSF Grants DMS-10-06903,
DMS-13-07470 and DMS-13-09856.}
\runtitle{The mean Euler characteristic and excursion probability}

\begin{aug}
% Corresponding author: Yimin Xiao - xiaoyimi@stt.msu.edu% Updated by
%VTEXPTS2LaTeX.exe, 09.03.2015 12:17
\author[A]{\fnms{Dan}~\snm{Cheng}\ead[label=e1]{cheng@stt.msu.edu}}
%%,
%\author[]{\fnms{}~\snm{}\ead[label=]{}}
\and
\author[B]{\fnms{Yimin}~\snm{Xiao}\corref{}\ead[label=e2]{xiao@stt.msu.edu}}
\runauthor{D. Cheng and Y. Xiao}
\affiliation{North Carolina State University and Michigan State University}
%\dedicated{}
\address[A]{Department of Statistics\\
North Carolina State University\\
2311 Stinson Drive\\
Campus Box 8203\\
Raleigh, North Carolina 27695\\
USA\\
\printead{e1}}
\address[B]{Department of Statistics and Probability\\
Michigan State University\\
619 Red Cedar Road\\
C-413 Wells Hall\\
East Lansing, Michigan 48824\\
USA\\
\printead{e2}}
\end{aug}

% HISTORY:
%
\received{\smonth{11} \syear{2012}}% Updated by VTEXPTS2LaTeX.exe,
%09.03.2015 12:17
%
\revised{\smonth{12} \syear{2014}}% Updated by VTEXPTS2LaTeX.exe,
%09.03.2015 12:17

% ABSTRACT
%
\begin{abstract}
Let $X = \{X(t), t\in\R^{N} \}$ be a centered Gaussian random field
with stationary increments and
$X(0) = 0$. For any compact rectangle $T \subset\R^N$ and $u\in\R$,
denote by
$A_u = \{t\in T\dvtx X(t)\geq u\}$ the
excursion set. Under $X(\cdot) \in C^2(\R^N)$ and certain regularity
conditions, the mean Euler
characteristic of~$A_u$, denoted by $\E\{\varphi(A_u)\}$, is derived.
By applying the Rice method, it is shown that,
as $u \to\infty$, the excursion probability $\P\{\sup_{t\in T} X(t)
\geq u \}$ can be approximated by
$\E\{\varphi(A_u)\}$ such that the error is exponentially smaller than
$\E\{\varphi(A_u)\}$. This verifies the expected Euler characteristic
heuristic
for a large class of Gaussian random fields with stationary increments.
\end{abstract}

% KEYWORDS
% Pirmas kwd is didziosios raides
%
\begin{keyword}[class=AMS]
%\kwd[Primary ]{}
\kwd{60G15}
\kwd{60G60}
\kwd{60G70}
%\kwd[; secondary ]{}
\end{keyword}
\begin{keyword}
\kwd{Gaussian random fields with stationary increments}
\kwd{excursion probability}
\kwd{excursion set}
\kwd{Euler characteristic}
\kwd{super-exponentially small}
%\kwd{}}.
\end{keyword}
\end{frontmatter}

%s1 #&#
\section{Introduction}\label{sec1}
Let $X = \{X(t), t\in T \}$ be a real-valued Gaussian random field on
probability
space $(\Omega, \mathcal{F}, \P)$, where $T$ is the parameter set. The
study of
the excursion probability $\P\{\sup_{t\in T} X(t) \geq u \}$ is a
classical but very
important problem in probability theory and has many applications in statistics
and related areas. Many authors have developed various methods for precise
approximations of $\P\{\sup_{t\in T} X(t) \geq u \}$. These include the
double sum method [\citet{Piterbarg96book}], the tube method
[\citet{Sun93}], the
Euler characteristic method [\citet{Adler00}, \citet{TTA03},
Taylor, Takemura and Adler (\citeyear{TTA05}), \citet{AT07}] and the Rice method
[Aza{\"{\i}}s, Bardet and
Wschebor (\citeyear{ABW02}), Aza\"is and Delmas (\citeyear{AD02}), Aza\"is and Wschebor
(\citeyear{AW05,AzaisW08,AW09})].

For a centered, unit-variance smooth Gaussian random field $X = \{X(t),
t\in T \}$ parameterized
on a manifold $T$, Adler and Taylor [(\citeyear{AT07}), Theorem~14.3.3] proved,
under certain conditions on
the regularity of $X$ and topology of $T$, the following approximation:
%
%e1.1 #&#
\begin{equation}
\label{Eq:MEC approx error} \P \Bigl\{\sup_{t\in T} X(t) \geq u \Bigr\} = \E
\bigl\{\varphi(A_u)\bigr\}\bigl(1 + o \bigl(e^{- \alpha u^2}\bigr)
\bigr) \qquad\mbox{as } u\to\infty,
\end{equation}
where $\varphi(A_u)$ is the Euler characteristic of excursion set $A_u
= \{t\in T\dvtx X(t)\geq u\}$
and $\alpha> 0$ is a constant which relates to the curvature of the boundary
of $T$ and the second-order partial derivatives of $X$. This verifies
the ``Expected Euler
Characteristic Heuristic'' for unit-variance smooth Gaussian random
fields. We refer
to Takemura and Kuriki (\citeyear{TakKurik02}), \citet{TTA03} and Taylor,
Takemura and Adler (\citeyear{TTA05})
for similar results in special cases. It should be mentioned that
Taylor, Takemura and Adler (\citeyear{TTA05})
were able to provide an explicit form of $\alpha$ in~(\ref{Eq:MEC
approx error}).
% and to \citet{Piterbarg96} for other approximations to the
%excursion
%probability.

The approximation (\ref{Eq:MEC approx error}) is remarkable and very
accurate, since
$\E\{\varphi(A_u)\}$ is computable and the error is exponentially
smaller than this principal
term. It has been applied for $P$-value approximation in many
statistical applications
to brain imaging, cosmology and environmental sciences. We refer to
\citet{AT07} and its forthcoming companion Adler, Taylor and
Worsley (\citeyear{AdlerTW12})
for further
information.
However, the above requirement of ``constant variance'' on the Gaussian
random fields
is too restrictive for many applications and excludes some important
Gaussian random fields
such as those with stationary increments (see Section~\ref{sec2} below), or more
generally,
Gaussian random intrinsic functions [\citet{Matheron73}, Stein
(\citeyear{Stein99},
\citeyear{Stein12})]. If the
constant variance condition on $X$ is not satisfied, then several
important properties
[e.g., $X(t)$ and its gradient $\nabla X(t)$ are independent for every
$t$] are not
available and the formulas for computing
$\E\{\varphi(A_u)\}$ [cf. Theorems 12.4.1 and 12.4.2 in \citet{AT07}]
cannot be applied. Little had been known on whether the approximation~(\ref{Eq:MEC approx error}) still holds. The only exception is Aza\"is
and Wschebor
[(\citeyear{AzaisW08}), Theorem~5], where they proved~(\ref{Eq:MEC approx error}) for a
centered smooth
Gaussian random field $X$ whose maximum variance is attained in the
interior of $T$.
%by applying the Rice method they established a general
%expression for the density function of $\sup_{t\in T} X(t)$ for a
%centered smooth
%Gaussian random field $X$ whose maximum variance is attained in the
%interior of $T$
%and consequently proved (\ref{Eq:MEC approx error}). %See also Aza\"is
%and Wschebor (2009).

In this paper, let $X = \{X(t), t\in\R^N \}$ be a centered
real-valued Gaussian
random field with stationary increments and $X(0) = 0$, and let $T
\subset\R^N$
be a rectangle. Our objectives are to compute the expected Euler characteristic
$\E\{\varphi(A_u)\}$ and to show that it can be applied to give an accurate
approximation for the excursion probability $\P\{\sup_{t\in T} X(t)
\geq u \}$.
In particular, we prove that (\ref{Eq:MEC approx error})
holds for a large class of smooth Gaussian random fields with
stationary increments
and $X(0) = 0$. In this generality, our main results in Sections~\ref{sec3} and
\ref{sec4} are new even
for the case of $N=1$.

%We mention that, for a nonstationary Gaussian field, many of the
%properties of stationary
%Gaussian field have lost. several difficulties have to be overcome.

The paper is organized as follows. In Section~\ref{sec2}, we provide
some preliminaries on Gaussian random fields with stationary increments and
prove some basic lemmas. These are derived from the spectral representation
of the random fields and will be useful for proving the main results in
Sections~\ref{sec3}
and~\ref{sec4}.

In Section~\ref{sec3}, we compute the mean Euler characteristic $\E\{\varphi
(A_u)\}$ by applying
the Kac--Rice metatheorem in Adler and Taylor [(\citeyear{AT07}),
Theorem~11.2.1]
[see also Adler and
Taylor (\citeyear{AdlerT11}), Theorem~4.1.1]. The computation of $\E\{
\varphi(A_u)\}$
involves the
conditional expectation of the determinant of the Hessian $\nabla^2
X(t)$ given $X(t)$ and $\nabla X (t)$,
which is more complicated for random fields with nonconstant variance function.
For Gaussian random fields with stationary increments, we are able to
make use of
the properties of $\nabla X$ and $\nabla^2 X$ (e.g., their
stationarity) to provide
an explicit formula in Theorem~\ref{Thm:MEC} for $\E\{\varphi(A_u)\}$,
using only derivatives of up to second order of the covariance function.

Section~\ref{sec4} is the core part of this paper. Theorems \ref{Thm:MEC
approximation 1} and
\ref{Thm:MEC approximation 2} provide approximations to the excursion
probability which
are analogous to (\ref{Eq:MEC approx error}) for Gaussian random fields
with stationary
increments and $X(0) = 0$. Since these random fields do not have
constant variance, it
is not clear if the original method for proving Theorem~14.3.3 in
\citet{AT07}
is still applicable. Instead, our argument is based on
the Rice method in Aza\"is and Delmas (\citeyear{AD02}) [see also Adler and Taylor
(\citeyear{AT07}), pages 96--99].
More specifically, we decompose the rectangle $T$ into several faces of
lower dimensions
and then apply the idea of \citet{Piterbarg96} and the Bonferroni
inequality to derive
upper and lower bounds for $\P\{\sup_{t\in T} X(t) \geq u \}$ in terms
of the number of
extended outward maxima [see (\ref{Ineq:upperbound}), (\ref{Ineq:lowerbound})]
and local maxima [see (\ref{Ineq:lowerbound and upperbound})], respectively.
The main idea is to show that, in both cases, the upper bound makes the
major contribution
for estimating $\P\{\sup_{t\in T} X(t) \geq u \}$ and the last two
terms in the lower bounds
in (\ref{Ineq:lowerbound}) and (\ref{Ineq:lowerbound and upperbound})
are super-exponentially small. Under a mild technical condition on the
variogram of $X$,
we apply (\ref{Ineq:lowerbound and upperbound}) to obtain in Theorem~\ref{Thm:MEC approximation 1} an expansion of the excursion probability
which is, in spirit, similar to the case of stationary Gaussian fields
[cf. (14.0.3) in
\citet{AT07}]. Theorem~\ref{Thm:MEC approximation 2}
establishes a general
approximation to $\P\{\sup_{t\in T} X(t) \geq u \}$ in terms of $\E\{
\varphi(A_u)\}$, which
verifies the ``Expected Euler Characteristic Heuristic'' for smooth
Gaussian random fields with
stationary increments and $X(0) = 0$. For the purpose of comparison, we
mention that, if
$Z = \{Z(t), t \in\R^N\}$ is a real-valued, centered stationary
Gaussian random field, then
the random field $X$ defined by $X(t) = Z(t) - Z(0)$ has stationary
increments with $X(0) = 0$.
Consequently, Theorems \ref{Thm:MEC approximation 1} and
\ref{Thm:MEC approximation 2} provide approximations to the excursion
probability
$\P\{\sup_{t\in T} Z(t) - Z(0)\geq u \}$.

Section~\ref{sec5} provides further remarks on the main results and some
examples where
significant simplifications can be made. In Remarks \ref{Example:unique
maximum 1}
and \ref{Example:unique maximum 2}, we show that if the variance function
of the random field attains its maximum at a unique point, then one can
apply the
Laplace method to derive a first-order approximation for the excursion
probability explicitly.
Finally, the \hyperref[app]{Appendix} contains proofs of some
auxiliary lemmas.

%s2 #&#
\section{Gaussian fields with stationary increments}\label{sec2}
%s2.1 #&#
\subsection{Spectral representation}\label{sec2.1}
Let $X = \{X(t), t\in\R^{N} \}$ be a real-valued
centered Gaussian random field with stationary increments. That is, for
any $h \in\R^N$,
$\{X(t+h)- X(h), t\in\R^{N} \} \stackrel{d}{=} \{X(t)-X(0), t\in\R
^{N} \}$, where $\stackrel{d}{=}$
means equality in finite dimensional distributions.
We assume that $X$ has continuous covariance function $C(t, s) = \E\{
X(t)X(s)\}$. %and $X(0) = 0$.
Then it is known [cf. \citet{Yaglom57}] that
%
%e2.1 #&#
%e2.2 #&#
\begin{eqnarray}
\label{Eq:Yaglom} %
&&C(t, s) - C(t, 0) - C(0, s) + C(0,0)
\nonumber
\\[-8pt]
\\[-8pt]
\nonumber
&&\qquad= \int_{\R^N} \bigl(e^{i \langle t, \lambda\rangle } - 1\bigr) \bigl(
e^{- i \langle s,
\lambda
\rangle } - 1\bigr) F(d \lambda ) + \langle t, \Theta s\rangle ,
%&= \int_{\R^N} ( \cos\langle t-s, \la\r- \cos\langle t, \la\r- \cos\langle s, \la
%\r+ 1) F(d \la) + \langle t, \Theta s\r,
%
\end{eqnarray}
where $\langle x, y\rangle $ is the ordinary inner product in $\R^N$,
$\Theta$ is
an $N\times N$ nonnegative definite matrix and $F $ is a
nonnegative symmetric measure on $\R^N \setminus\{0 \}$ which
satisfies
%
%e2.3 #&#
\begin{equation}
\label{Eq:LM} \int_{\R^N} \frac{\|\lambda \|^2} {1 + \|\lambda \|^2} F(d \lambda ) <
\infty.
\end{equation}
Similar to stationary random fields, the measure $F$ and its density
(if it exists) $f(\lambda )$ are called the \textit{spectral measure}
and \textit{spectral density}
of $X$, respectively. It is known that many probabilistic, analytic and
geometric properties of
$\{X(t), t\in\R^{N} \}$ can be described in terms of its
spectral measure $F$ and, on the other hand, various interesting Gaussian
random fields can be constructed by choosing their spectral measures
appropriately.
See \citet{X09}, \citet{XX09} and the references therein for
more information.

By (\ref{Eq:Yaglom}), we see that $X$ has the following stochastic
integral representation:
%
%e2.4 #&#
\begin{equation}
\label{Eq:rep} X(t) - X(0) \stackrel{d} {=} \int_{\R^N}
\bigl(e^{i \langle t, \lambda\rangle }-1\bigr) W(d\lambda) + \langle{\mathbf Y}, t\rangle ,
\end{equation}
where ${\mathbf Y}$ is an $N$-dimensional Gaussian random vector and
$W$ is
a complex-valued Gaussian
random measure (independent of ${\mathbf Y}$) with $F$ as its control measure.
Note that in (\ref{Eq:rep}) there is no restriction on $X(0)$ other
than that all joint distributions
of $\{X(t), t \in\R^N\}$ are Gaussian.
%For simplicity we will assume $X(0) = 0$ in Sections 3 and 4
%and comment on the general case in Remark~\ref{Remark:contain the
%origin} in Section~5.

For simplicity, we assume throughout this paper that ${\mathbf Y}= 0$. It
follows from (\ref{Eq:Yaglom})
or (\ref{Eq:rep}) that the \emph{variogram} $\nu$ of $X$ is given by
%
%e2.5 #&#
\begin{equation}
\label{Eq:variogram} \nu(h) := \E\bigl(X(t+h)-X(t)\bigr)^2 = 2\int
_{\R^N}\bigl(1-\cos\langle h, \lambda \rangle \bigr) F(d\lambda).
\end{equation}

Mean-square directional derivatives and sample path differentiability
of Gaussian random
fields have been well studied. See, for example, \citet{Adler81},
\citet{AT07},
\citet{Potthoff10}, \citet{XX09}. In particular, general sufficient
conditions for a Gaussian
random field to have a modification whose sample
functions are in $C^{k}(\R^N)$ are given by \citet{AT07}.
For a Gaussian random field $X= \{X(t), t \in\R^N\}$ with stationary
increments,
\citet{XX09} provided conditions for its sample path differentiability
in terms of the spectral density function $f(\lambda)$. A similar
argument can be applied to
give the following spectral condition for the sample functions of $X$
to be in $C^{k}(\R^N)$,
whose proof is given in \citet{ChengPhD} and is omitted here.

%pr2.1 #&#
\begin{proposition}\label{prop:de}
Let $X = \{X(t), t\in\R^{N} \}$ be a real-valued centered Gaussian
random field with
stationary increments and let $k_i\ (1 \le i \le N)$ be nonnegative
integers. If there is a
constant $\ep>0$ such that
%
%e2.6 #&#
\begin{equation}
\label{Eq:spCon} \int_{\{\|\lambda\|\ge1\}} \prod_{i=1}^N|
\lambda_i|^{2k_i + \ep} F(d\lambda) < \infty,
\end{equation}
then $X$ has a modification $\widetilde{X}$ such that the partial derivative
$\frac{\partial^k\widetilde{X}(t)}{\partial t_1^{k_1}\cdots\partial
t_N^{k_N}}$ is
continuous on $\R^N$ almost surely, where $k = \sum_{i=1}^N k_i$.
Moreover, for any
compact rectangle $T \subset\R^N$ and any
$\ep' \in(0, \ep\wedge1)$, there exists a constant $c_1$ such that
%
%e2.7 #&#
\begin{equation}
\E \biggl(\frac{\partial^k\widetilde{X}(t)}{\partial t_1^{k_1}\cdots
\partial t_N^{k_N}}- \frac{\partial^k\widetilde{X}(s)}{\partial s_1^{k_1}\cdots\partial
s_N^{k_N}} \biggr)^2 \leq
c_1\|t-s\|^{\ep'}\qquad \forall t,s\in T.
\end{equation}
\end{proposition}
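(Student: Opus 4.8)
The plan is to produce the derivative field by differentiating the spectral representation (\ref{Eq:rep}) term by term, and then to promote the resulting mean-square derivative to a genuine sample-path derivative via Kolmogorov's continuity criterion. Set $k=\sum_{i=1}^N k_i$ and assume $k\ge 1$, the case $k=0$ being trivial. Since at least one $k_i\ge1$, differentiation annihilates the constant $-1$ in (\ref{Eq:rep}), so the natural candidate for the mixed partial derivative is the centered Gaussian field
\[
Y(t)=\int_{\R^N}(i\la_1)^{k_1}\cdots(i\la_N)^{k_N}\,e^{i\langle t,\la\rangle}\,W(d\la),\qquad t\in\R^N,
\]
with $W$ the complex Gaussian random measure of (\ref{Eq:rep}); the symmetry of $F$ makes $Y$ real-valued. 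First I would check that $Y$ is well defined, that is, $\E|Y(t)|^2=\int_{\R^N}\prod_{i=1}^N|\la_i|^{2k_i}\,F(d\la)<\infty$, by splitting the integral at $\|\la\|=1$: on $\{\|\la\|<1\}$ the integrand is $\le\|\la\|^{2k}\le\|\la\|^2$ because $k\ge1$, which is $F$-integrable by (\ref{Eq:LM}); on $\{\|\la\|\ge1\}$ one uses (\ref{Eq:spCon}) together with the finiteness of $F$ on that set, itself a consequence of (\ref{Eq:LM}).

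Next I would verify that $Y$ is indeed the mean-square partial derivative $\partial^k X/\partial t_1^{k_1}\cdots\partial t_N^{k_N}$, carrying out the differentiations one variable at a time: at each stage the difference quotient in direction $e_j$ of the current field equals $\int_{\R^N}(\cdots)\,e^{i\langle t,\la\rangle}\,\frac{e^{ih\la_j}-1}{h}\,W(d\la)$, and its $L^2$-distance to the next field is $\int_{\R^N}(\cdots)\,\big|\frac{e^{ih\la_j}-1}{h}-i\la_j\big|^2\,F(d\la)$, which tends to $0$ as $h\to0$ by dominated convergence, the dominating function being $F$-integrable by the bound above. The quantitative estimate in the proposition then follows from
\[
\E\big(Y(t)-Y(s)\big)^2=\int_{\R^N}\prod_{i=1}^N|\la_i|^{2k_i}\cdot 2\big(1-\cos\langle t-s,\la\rangle\big)\,F(d\la),
\]
combined with the elementary inequality $2(1-\cos\th)\le\min(4,\th^2)\le 4\,|\th|^{\ep'}$, valid for every $\ep'\in(0,2]$, and $|\langle t-s,\la\rangle|\le\|t-s\|\,\|\la\|$: the right-hand side is then $\le c\,\|t-s\|^{\ep'}\int_{\R^N}\prod_i|\la_i|^{2k_i}\|\la\|^{\ep'}\,F(d\la)$. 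For $\ep'\in(0,\ep\wedge1)$ this last integral is finite by the same splitting (near the origin the integrand is $\le\|\la\|^{2k+\ep'}\le\|\la\|^2$; near infinity the surplus exponent $\ep'<\ep$ absorbs the extra factor $\|\la\|^{\ep'}$ through (\ref{Eq:spCon})), which gives the bound with a constant $c_1$ depending only on $T$ and $\ep'$.

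Finally I would pass to sample paths. Since $Y$ is centered Gaussian, $\E|Y(t)-Y(s)|^{2p}=c_p\big(\E(Y(t)-Y(s))^2\big)^p\le c_p\,c_1^p\,\|t-s\|^{\ep' p}$ for every integer $p\ge1$; choosing $p$ with $\ep' p>N$, the Kolmogorov--Chentsov theorem yields a modification of $Y$ with locally H\"older, hence continuous, sample paths. A standard argument then identifies this continuous version with the genuine sample-path derivative of a modification $\widetilde X$ of $X$: carrying out the $k$ differentiations one coordinate at a time, at each stage one writes the current field as the indefinite integral in that coordinate of the continuous next-stage field plus a continuous ``constant of integration'', and notes that two continuous modifications of one process coincide almost surely; after $k$ steps this produces $\widetilde X$ and shows that $\partial^k\widetilde X/\partial t_1^{k_1}\cdots\partial t_N^{k_N}=Y$ is continuous on $\R^N$ almost surely. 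The one genuinely delicate point is this last passage from mean-square to sample-path differentiability with a \emph{continuous} derivative, and it is precisely the $\ep$ of slack in (\ref{Eq:spCon}) that powers the Kolmogorov moment bound; the spectral estimates themselves are routine once the frequency domain is split at $\|\la\|=1$ and (\ref{Eq:LM}) is invoked near the origin.
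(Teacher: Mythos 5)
You should first note that the paper itself does not prove Proposition \ref{prop:de}: it explicitly defers the proof to Cheng (2013), citing the argument of Xue and Xiao (2011). Your overall architecture is exactly the route one would expect there — define the candidate derivative $Y$ by formally differentiating the spectral representation \eqref{Eq:rep}, bound $\E(Y(t)-Y(s))^2$ via $2(1-\cos\theta)\le\min(4,\theta^2)\le 4|\theta|^{\varepsilon'}$, use equivalence of Gaussian moments plus Kolmogorov--Chentsov to get a continuous modification, and then integrate up coordinate by coordinate to identify it with the sample-path derivative of a modification $\widetilde X$. So the strategy is not the issue.

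The genuine gap is in the two spectral integrability deductions, which are the crux of the proposition. On $\{\|\lambda\|\ge 1\}$ you need $\int\prod_{i=1}^N|\lambda_i|^{2k_i}\,F(d\lambda)<\infty$ (for $Y$ to be defined) and $\int\prod_{i=1}^N|\lambda_i|^{2k_i}\|\lambda\|^{\varepsilon'}\,F(d\lambda)<\infty$ (for the H\"older bound), and you justify both by asserting that \eqref{Eq:spCon} dominates them, ``the surplus exponent $\varepsilon'<\varepsilon$ absorbing the extra factor $\|\lambda\|^{\varepsilon'}$.'' For $N=1$ this is fine, but for $N\ge 2$ the pointwise domination $\prod_i|\lambda_i|^{2k_i}\|\lambda\|^{\varepsilon'}\le C\prod_i|\lambda_i|^{2k_i+\varepsilon}$ fails on the part of $\{\|\lambda\|\ge1\}$ where some coordinate $|\lambda_j|$ is small while another is large: the right-hand side then carries the extra small factor $|\lambda_j|^{\varepsilon}$ (most visibly when $k_j=0$), whereas the left-hand side does not and $\|\lambda\|^{\varepsilon'}$ is large. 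Moreover this is not repairable by a cleverer splitting within the stated hypotheses: take $N=2$, $k_1=1$, $k_2=0$, and a symmetric measure $F$ supported on the curves $\lambda_2=\pm\lambda_1^{-M}$, $|\lambda_1|\ge1$, with density $|\lambda_1|^{-2}$ in $\lambda_1$; for $M>1+1/\varepsilon$ both \eqref{Eq:LM} and \eqref{Eq:spCon} hold, yet $\int\lambda_1^{2}\,F(d\lambda)=\infty$, so your $Y$ is not even square-integrable and the mean-square derivative $X_1$ does not exist. Thus the step ``\eqref{Eq:spCon} $\Rightarrow$ the moments I actually use'' needs either extra input (as presumably supplied in Cheng (2013)) or a strengthened hypothesis such as \eqref{Eq:spCon2}, i.e.\ a bound of the form $\int_{\{\|\lambda\|\ge1\}}\|\lambda\|^{2k+\varepsilon}F(d\lambda)<\infty$, under which your dominations do hold verbatim. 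A secondary omission of the same kind: your final induction requires every intermediate derivative field (multi-indices componentwise below $(k_1,\dots,k_N)$) to be well defined and continuous, hence the analogous integrability at each stage, which your argument neither states nor derives from \eqref{Eq:spCon}.
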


For notational simplicity, we will not distinguish $X$ from its
modification $\widetilde{X}$.
As a consequence of Proposition~\ref{prop:de}, we see that, if $X= \{
X(t), t\in\R^{N} \}$
has a spectral density $f(\lambda)$ which satisfies
%
%e2.8 #&#
\begin{equation}
\label{Eq:spCon2} f(\lambda)= O \biggl(\frac{1}{\|\lambda \|^{N+2k+H}} \biggr)\qquad
 \mbox{as } \|\lambda
\| \to\infty,
\end{equation}
for some integer $k \geq1$ and $H\in(0, 1)$, then the sample
functions of $X$ are in $C^k(\R^N)$
a.s. Further examples of anisotropic Gaussian random fields which may
have different
smoothness along different directions can be found in \citet{XX09}.

When $X(\cdot) \in C^2(\R^N)$ almost surely, we write $\frac
{\partial
X(t)}{\partial t_i} = X_i (t)$ and
$\frac{\partial^2 X(t)}{\partial t_i\,\partial t_j} = X_{ij}(t)$. We will
use the same notation for
the partial derivatives of deterministic functions such as $\nu( \cdot
)$ in Theorem~\ref{Thm:MEC approximation 1}.

Denote by $\nabla X(t)$ and $\nabla^2 X(t)$ the column vector $(X_1(t),
\ldots, X_N(t))^{T}$ and the
$N\times N$ matrix $(X_{ij}(t))_{ i, j = 1, \ldots, N}$, respectively.
It follows from (\ref{Eq:Yaglom})
that for every $t \in\R^N$,
%
%e2.9 #&#
\begin{equation}
\label{Eq:laij} \lambda _{ij}:= \int_{\R^N} \lambda
_i \lambda _j F(d \lambda ) = \frac{\partial^2 C(t, s)}{
\partial t_i\, \partial s_j}
\bigg|_{s=t}= \E\bigl\{X_i(t) X_j(t)\bigr\}.
\end{equation}
Let $\La= (\lambda _{ij})_{i, j = 1,\ldots, N}$, then (\ref
{Eq:laij}) shows
that $\La=\operatorname{ Cov}(\nabla X(t))$
for all $t$. In particular, the distribution of $\nabla X(t)$ is
independent of $t$. Let
\[
\lambda _{ij}(t) := \int_{\R^N} \lambda
_i \lambda _j \cos\langle t, \lambda \rangle F(d \lambda ),\qquad
\La(t):= \bigl(\lambda _{ij}(t)\bigr)_{i, j = 1,\ldots, N}. %
\]
Then we have
%
%e2.10 #&#
%e2.11 #&#
\begin{eqnarray}
\label{ladiff} %
\lambda _{ij}(t)-\lambda _{ij} &=&
\int_{\R^N} \lambda _i \lambda _j \bigl(\cos
\langle t, \lambda \rangle -1 \bigr) F(d\lambda )
\nonumber
\\[-8pt]
\\[-8pt]
\nonumber
&=& \frac{\partial^2 C(t, s)}{\partial s_i\, \partial s_j}\bigg|_{s=t}=\E \bigl\{\bigl(X(t)-X(0)
\bigr)X_{ij}(t)\bigr\}, %
\end{eqnarray}
or equivalently, $\La(t)-\La= \E\{(X(t)-X(0)) \nabla^2 X(t)\}$.

In studies of a Gaussian random field $X$ with stationary increments in
the literature, it is
often assumed that $X(0) = 0$. In this case, $\La(t)-\La= \E\{X(t)
\nabla^2 X(t)\}$. With
little loss of generality, we will follow this convention by assuming
$X(0) = 0$ in the rest
of this paper. The general case can be dealt with by first applying the
result of this paper to
the random field $\{X(t) - X(0), t \in\R^N\}$ and then taking into
consideration of the
available information on $X(0)$ [an interesting special case is when
$X(0)$ is independent
of $\{X(t) - X(0), t \in\R^N\}$].

%s2.2 #&#
\subsection{Hypotheses and some important properties}\label{sec2.2}

%From now on, we
%As in studying a
%We will that
% in Sections 3 and 4 and comment on the general case in Remark~\ref{Remark:contain the origin} in Section~5.
%It should also be pointed out that, if $Z = \{Z(t), t \in\R^N\}$ is a
%real-valued,
%centered stationary Gaussian random field with spectral measure $F_Z$,
%then the random field $X = \{X(t), t \in\R^N\}$ defined by $X(t) =
%Z(t) - Z(0)$
%has stationary increments and spectral measure $F_Z$, and satisfies
%$X(0) = 0$.

Let $T=\prod^N_{i=1}[a_i, b_i]$ be a compact rectangle in $\R^N$, where
$a_i<b_i$ for all $1\leq i \leq N$
and $0\notin T$ (the case of $0\in T$ will be discussed in Remark~\ref
{Remark:contain the origin}).
%The main difference of we consider Gaussian random fields with
%stationary increments and $X(0) = 0$
%Let $X= \{X(t), t \in\R^N\}$ be a centered Gaussian random field with
%.
In addition to assuming that the Gaussian random field $X= \{X(t), t
\in\R^N\}$ has stationary increments
and $X(0) = 0$,
we will make use of the following conditions:
\begin{longlist}[(H3$'$)]
\item[(H1)] $X(\cdot) \in C^2(T)$ almost surely and its second
derivatives satisfy the
\emph{uniform mean-square H\"older condition}: there exist constants
$L>0$ and $ \eta\in(0, 1]$
such that
%
%e2.12 #&#
\begin{equation}
\E\bigl(X_{ij}(t)-X_{ij}(s)\bigr)^2 \leq L
\|t-s\|^{2\eta}\qquad \forall t,s\in T, i, j= 1, \ldots, N.
\end{equation}
\item[(H2)] For every $t\in T$, the matrix $\La-\La(t)$
is nondegenerate.
\item[(H3)] For every pair $(t, s)\in T^2$ with $t\neq s$, the
Gaussian random vector
\[
\bigl(X(t), \nabla X(t), X_{ij}(t), X(s), \nabla X(s),
X_{ij}(s), 1\leq i\leq j\leq N\bigr)
\]
is nondegenerate.
\item[(H3$'$)] For every $t\in T$, $(X(t), \nabla X(t),
X_{ij}(t), 1\leq i\leq j\leq N)$
is nondegenerate.
\end{longlist}

Clearly, by Proposition~\ref{prop:de}, condition (H1) is
satisfied if (\ref{Eq:spCon2})
holds for $k=2$. Also note that (H3) implies (H3$'$). We
shall use conditions (H1),
(H2) and (H3) to prove Theorems \ref{Thm:MEC approximation
1} and \ref{Thm:MEC approximation 2}
on the excursion probability. Condition (H$3'$) will be used for
computing $\E\{\varphi(A_u)\}$
in Theorem~\ref{Thm:MEC}.

We point out that the nondegeneracy conditions (H3) and (H3$'$) are standard for studying
crossing problems when $N=1$, excursion sets and excursion
probabilities of smooth Gaussian random fields.
In the case where $N=1$ and $X$ is a stationary Gaussian process, Cram{\'e}r and Leadbetter
[(\citeyear{CramerL67}), pages~203--204]
showed that (H3$'$) is automatically satisfied if $X$ has
second-order mean square derivatives
and the spectral measure of $X$ is not purely discrete. See Exercises
3.4 and 3.5 in Aza\"is and Wschebor
[(\citeyear{AW09}), page~87] for similar results. Notice that (H3) and
(H3$'$) are equivalent to saying that
the corresponding covariance matrices are nondegenerate which, in turn,
can be verified by establishing
positive lower bounds for the conditional variances. Thus, (H3)
and (H3$'$) are related to
the properties of local nondeterminism [cf. \citet{Cuzick77},
\citet{X09}]. Hence, for a general Gaussian
random field $X$ with stationary increments, it is possible to provide
sufficient conditions in terms of
the spectral measure $F$ for (H3) and (H3$'$) to
hold. In
order not to make this paper too
lengthy, we do not give details here.

The following lemma shows that for Gaussian fields with stationary
increments and $X(0) = 0$, (H2)
is equivalent to $\La-\La(t)$ being positive definite.

%le2.2 #&#
\begin{lemma} \label{Lem:posdef} For every $t\in\R^N$, $\La-\La
(t)$ is
nonnegative definite.
Hence, under \textup{(H2)}, $\La-\La(t)$ is positive definite for every
$t \in T$.
\end{lemma}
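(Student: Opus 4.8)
The plan is to show that $\La - \La(t)$ is the covariance matrix of a certain Gaussian random vector, hence automatically non-negative definite. Recall from (\ref{ladiff}) that
\[
\La(t) - \La = \E\{X(t)\nabla^2 X(t)\},
\]
so $\La - \La(t) = -\E\{X(t)\nabla^2 X(t)\}$. This does not immediately look like a covariance, so the first step is to rewrite it using the spectral representation. From (\ref{Eq:laij}) and the definition of $\la_{ij}(t)$ we get directly
\[
(\La - \La(t))_{ij} = \int_{\R^N} \la_i \la_j (1 - \cos\l t,\la\r)\, F(d\la).
\]
The key observation is the half-angle identity $1 - \cos\l t,\la\r = 2\sin^2(\l t,\la\r/2)$, so that
\[
(\La - \La(t))_{ij} = 2\int_{\R^N} \big(\la_i \sin(\tfrac{1}{2}\l t,\la\r)\big)\big(\la_j \sin(\tfrac{1}{2}\l t,\la\r)\big)\, F(d\la).
\]

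Once in this form, non-negative definiteness is immediate: for any $x = (x_1,\dots,x_N)^T \in \R^N$,
\[
x^T(\La - \La(t))x = 2\int_{\R^N} \Big(\sum_{i=1}^N x_i \la_i \sin(\tfrac{1}{2}\l t,\la\r)\Big)^2 F(d\la) \geq 0,
\]
since the integrand is a square and $F$ is a non-negative measure. (One should check the integral is finite, but this follows from (\ref{Eq:laij}) since the integrand is dominated by $\la_i\la_j$ which is $F$-integrable; alternatively it is just $\La - \La(t)$, a finite matrix by hypothesis.) This proves the first assertion. Note the hypothesis $t \neq 0$ is not actually needed for non-negativity — at $t = 0$ one simply gets the zero matrix — but it matters for the second assertion.

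For the second assertion, under ({\bf H}2) the matrix $\La - \La(t)$ is non-degenerate, i.e. invertible. A non-negative definite matrix that is invertible has all eigenvalues strictly positive, hence is positive definite. This is the entire argument, so I expect no real obstacle; the only thing to be careful about is the justification that $x^T(\La-\La(t))x = 0$ with $\La - \La(t)$ invertible forces $x = 0$, which is just the standard fact that non-negative definiteness plus $\det \neq 0$ implies positive definiteness (a zero of the quadratic form would be a null vector, contradicting invertibility). If one wanted to be fully self-contained one could instead argue: if $x^T(\La - \La(t))x = 0$ then $(\La - \La(t))x = 0$ by the Cauchy–Schwarz characterization of null vectors of a non-negative definite matrix, contradicting non-degeneracy unless $x = 0$.
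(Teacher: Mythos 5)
Your proposal is correct and follows essentially the same route as the paper: both express the quadratic form $\sum_{i,j}a_ia_j(\la_{ij}-\la_{ij}(t))$ as $\int_{\R^N}\bigl(\sum_i a_i\la_i\bigr)^2(1-\cos\l t,\la\r)\,F(d\la)$, observe that the integrand is non-negative, and then note that non-negative definiteness together with the non-degeneracy in ({\bf H}2) forces all eigenvalues to be positive. Your half-angle rewriting $1-\cos\l t,\la\r = 2\sin^2(\l t,\la\r/2)$ is a cosmetic repackaging of the same observation and changes nothing of substance.
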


\begin{pf} Let $t \in\R^N $ be fixed. It follows from (\ref
{ladiff}) that for any $(a_1, \ldots, a_N) \in\R^N\setminus\{0\}$,
%
%e2.13 #&#
\begin{equation}
\label{Eq:nonnegative def} %
\sum_{i,j=1}^N
a_ia_j \bigl(\lambda _{ij}-\lambda
_{ij}(t)\bigr) = \int_{\R
^N} \Biggl(\sum
_{i=1}^N a_i\lambda _i
\Biggr)^2 \bigl(1- \cos\langle t, \lambda \rangle \bigr) F(d\lambda ). %
\end{equation}
Since $(\sum_{i=1}^N a_i\lambda _i)^2 (1- \cos\langle t, \lambda
\rangle ) \geq0$ for all
$\lambda \in\R^N$,
(\ref{Eq:nonnegative def}) is always nonnegative, which implies that
$\La-\La(t)$ is nonnegative definite.
If (H2) is satisfied, then, for every $t \in T$, all the
eigenvalues of $\La-\La(t)$ are positive.
This completes the proof.
\end{pf}

It follows from (\ref{Eq:nonnegative def}) that, if the spectral
measure $F$ is \textit{full} [i.e.,
not supported on any $(N- 1)$-dimensional hyperplane], then (H2)
holds. Hence, (H2)
is in fact a mild condition for smooth Gaussian fields with stationary
increments.

%\begin{remark} We set all the conditions (H1)-(H3)
%and ({
%\bf H}3$'$) only by $X$
%itself (or equivalently, its covariance $C$), without even referring
%to the spectral representation.
%On the other hand, we apply the spectral representation to analyze
%various properties of $X$,
%and make sure our conditions are well set. This method is quite
%similar to the one used to deal
%with stationary Gaussian fields.
%\end{remark}

Lemma~\ref{Lem:posdef} and the following two lemmas indicate some significant
properties of Gaussian fields with stationary increments.
%which are similar to those of stationary Gaussian field.
They will play important roles in later sections.

%le2.3 #&#
\begin{lemma}\label{Lem:independent}
Let $t \in\R^N$ be fixed. Then for all $i,j,k$, the random variables
$X_i(t)$ and $X_{jk}(t)$
are independent. Moreover, $\E\{X_{ij}(t)X_{kl}(t)\}$ is symmetric in
$i$, $j$, $k$, $l$.
\end{lemma}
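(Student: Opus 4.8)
The plan is to express every second moment appearing in the statement as a partial derivative of the covariance function $C(t,s)$ evaluated on the diagonal $s=t$, and then to differentiate under the integral sign in the spectral representation (\ref{Eq:Yaglom}). Since the random vector $(X_i(t), X_{jk}(t))$ is centered and jointly Gaussian, the independence of $X_i(t)$ and $X_{jk}(t)$ is equivalent to $\E\{X_i(t)X_{jk}(t)\}=0$, so it suffices to compute this covariance. By the mean-square differentiability of $X$ (which holds under the regularity hypotheses in force; cf. Proposition \ref{prop:de}), one has the classical identities $\E\{X_i(t)X_{jk}(s)\}=\partial^3 C(t,s)/(\partial t_i\,\partial s_j\,\partial s_k)$ and $\E\{X_{ij}(t)X_{kl}(s)\}=\partial^4 C(t,s)/(\partial t_i\,\partial t_j\,\partial s_k\,\partial s_l)$.

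Next, recalling that ${\bf Y}=0$ (hence $\Theta=0$), so that $C(t,s)=\int_{\R^N}(e^{i\langle t,\la\rangle}-1)(e^{-i\langle s,\la\rangle}-1)\,F(d\la)$, I would differentiate under the integral. These differentiations are justified by dominated convergence, using that $\int_{\R^N}\la_i^2\,F(d\la)<\infty$ and $\int_{\R^N}\la_i^2\la_j^2\,F(d\la)<\infty$ for all $i,j$, which follow from (\ref{Eq:laij}) and the fact that $X\in C^2$ possesses second-order mean-square derivatives. Carrying out the computation gives $\partial^3 C(t,s)/(\partial t_i\,\partial s_j\,\partial s_k)=-i\int_{\R^N}\la_i\la_j\la_k\,e^{i\langle t-s,\la\rangle}\,F(d\la)$, the integral converging absolutely since, by Cauchy--Schwarz, $\int_{\R^N}|\la_i\la_j\la_k|\,F(d\la)\le(\int_{\R^N}\la_i^2\,F(d\la))^{1/2}(\int_{\R^N}\la_j^2\la_k^2\,F(d\la))^{1/2}<\infty$. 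Setting $s=t$ yields $\E\{X_i(t)X_{jk}(t)\}=-i\int_{\R^N}\la_i\la_j\la_k\,F(d\la)$, and since $F$ is symmetric while the integrand $\la_i\la_j\la_k$ is odd, this integral vanishes. Hence $X_i(t)$ and $X_{jk}(t)$ are uncorrelated, and therefore independent.

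For the second assertion, the same differentiation gives $\partial^4 C(t,s)/(\partial t_i\,\partial t_j\,\partial s_k\,\partial s_l)=\int_{\R^N}\la_i\la_j\la_k\la_l\,e^{i\langle t-s,\la\rangle}\,F(d\la)$, which is again absolutely convergent by Cauchy--Schwarz ($\int_{\R^N}|\la_i\la_j\la_k\la_l|\,F(d\la)\le(\int_{\R^N}\la_i^2\la_j^2\,F(d\la))^{1/2}(\int_{\R^N}\la_k^2\la_l^2\,F(d\la))^{1/2}<\infty$). Putting $s=t$ gives $\E\{X_{ij}(t)X_{kl}(t)\}=\int_{\R^N}\la_i\la_j\la_k\la_l\,F(d\la)$, which is manifestly symmetric in $i,j,k,l$. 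I expect the only genuinely technical point to be the justification of differentiating under the integral sign and of the identity relating covariances of the derivative fields to derivatives of $C$; both are standard for smooth Gaussian fields and can alternatively be obtained directly from the stochastic integral representation (\ref{Eq:rep}) together with the isometry property of the random measure $W$. Note that the final formulas do not depend on $t$ (the factor $e^{i\langle t-s,\la\rangle}$ equals $1$ on the diagonal), so the conclusion holds for every $t\in\R^N$, including $t=0$.
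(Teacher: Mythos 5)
Your proof is correct and follows essentially the same route as the paper: both express $\E\{X_i(t)X_{jk}(s)\}$ and $\E\{X_{ij}(t)X_{kl}(s)\}$ as the third and fourth partial derivatives of $C(t,s)$, differentiate the spectral representation under the integral, and conclude by setting $s=t$ and using the symmetry of $F$. The extra care you take with dominated convergence and the Cauchy--Schwarz bounds is a welcome but routine elaboration of what the paper leaves implicit.
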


\begin{pf} By (\ref{Eq:Yaglom}), one can verify that for $t, s
\in\R^N$,
\[
\E\bigl\{X_i(t)X_{jk}(s)\bigr\} = \frac{\partial^{3} C(t, s)}{\partial t_i
\,\partial s_j\, \partial s_k} =
\int_{\R^N} \lambda _i\lambda _j\lambda
_k \sin\langle t-s, \lambda \rangle F(d \lambda ).
\]
Letting $s=t$ we see that $X_i(t)$ and $X_{jk}(t)$ are independent.
Similarly, we have
\[
\E\bigl\{X_{ij}(t)X_{kl}(s)\bigr\} = \frac{\partial^{4} C(t, s)}{
\partial t_i\, \partial t_j\, \partial s_k\, \partial s_l}=
\int_{\R
^N} \lambda _i\lambda _j
\lambda _k\lambda _l \cos\langle t-s, \lambda \rangle F(d
\lambda ).
\]
This implies the second conclusion.
\end{pf}

The following lemma is a consequence of Lemma~\ref{Lem:independent}.

%le2.4 #&#
\begin{lemma}\label{Lem:symmetric}
Let $A = (a_{ij})_{1\leq i, j \leq N}$ be a symmetric matrix. Then for
any fixed $t \in\R^N$,
\[
\mathcal{S}_t(i,j,k,l)= \E\bigl\{\bigl(A \nabla^2 X(t)
A\bigr)_{ij} \bigl(A \nabla^2 X(t) A\bigr)_{kl}
\bigr\}
\]
is a symmetric function of $i$, $j$, $k$, $l$.
\end{lemma}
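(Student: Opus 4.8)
The plan is to expand $A\,\nabla^2 X(t)\,A$ in coordinates and reduce the claim to Lemma~\ref{Lem:independent}. Writing $A=(a_{ij})$ with $a_{ij}=a_{ji}$, the $(i,j)$ entry of $A\,\nabla^2 X(t)\,A$ is $(A\nabla^2 X(t) A)_{ij}=\sum_{p,q=1}^N a_{ip}X_{pq}(t)a_{qj}$. Substituting this into the definition of $\mathcal{S}_t$ and using bilinearity of the expectation,
\[
\mathcal{S}_t(i,j,k,l)=\sum_{p,q,r,s=1}^N a_{ip}a_{qj}a_{kr}a_{sl}\,\E\{X_{pq}(t)X_{rs}(t)\}.
\]

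Next I would invoke Lemma~\ref{Lem:independent}, which gives that $m_{pqrs}:=\E\{X_{pq}(t)X_{rs}(t)\}$ is invariant under every permutation of $p,q,r,s$ (indeed $m_{pqrs}=\int_{\R^N}\la_p\la_q\la_r\la_s\,F(d\la)$). Together with the symmetry $a_{ip}=a_{pi}$ of $A$, this allows any permutation of $(i,j,k,l)$ to be absorbed by a matching relabeling of the dummy indices $(p,q,r,s)$. It suffices to check invariance under a generating set of transpositions of the symmetric group on $\{i,j,k,l\}$, e.g.\ $(i\,j)$, $(k\,l)$ and $(i\,k)$. For $(i\,k)$, relabeling $p\leftrightarrow r$ in the sum and then using $m_{rqps}=m_{pqrs}$ yields
\[
\mathcal{S}_t(k,j,i,l)=\sum_{p,q,r,s} a_{kp}a_{qj}a_{ir}a_{sl}\,m_{pqrs}=\sum_{p,q,r,s} a_{ip}a_{qj}a_{kr}a_{sl}\,m_{rqps}=\mathcal{S}_t(i,j,k,l);
\]
for $(i\,j)$, relabeling $p\leftrightarrow q$ and using $m_{qprs}=m_{pqrs}$ together with $a_{jq}=a_{qj}$ and $a_{pi}=a_{ip}$ gives $\mathcal{S}_t(j,i,k,l)=\mathcal{S}_t(i,j,k,l)$; the transposition $(k\,l)$ is handled identically with the relabeling $r\leftrightarrow s$.

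Since $(i\,j)$, $(k\,l)$ and $(i\,k)$ generate the full symmetric group on the four indices, $\mathcal{S}_t$ is symmetric in $i,j,k,l$, which is the assertion. There is no real obstacle here beyond careful index bookkeeping; the only point worth flagging is that the argument uses both ingredients essentially — the full permutation symmetry of $\E\{X_{pq}(t)X_{rs}(t)\}$ from Lemma~\ref{Lem:independent} to move indices across the two factors, and the symmetry of $A$ to make the relabeling within a single factor close up.
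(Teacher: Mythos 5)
Your proposal is correct and follows exactly the route the paper intends: the paper states this lemma without proof as ``a consequence of Lemma~\ref{Lem:independent},'' and your coordinate expansion together with the full permutation symmetry of $\E\{X_{pq}(t)X_{rs}(t)\}=\int_{\R^N}\la_p\la_q\la_r\la_s\,F(d\la)$ and the symmetry of $A$ is precisely the omitted verification. The index bookkeeping and the choice of generating transpositions all check out.
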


%s3 #&#
\section{The mean Euler characteristic}\label{sec3}
%s3.1 #&#
\subsection{Related existing results and notation}\label{sec3.1}
The rectangle $T=\prod^N_{i=1}[a_i, b_i]$ can be decomposed into several
faces of lower dimensions. We use the same notation as in Adler and
Taylor [(\citeyear{AT07}), page~134].

A face $J$ of dimension $k$, is defined by fixing a subset $\sigma(J)
\subset\{1, \ldots, N\}$ of size $k$ [if $k=0$, we have $\sigma(J)
= \varnothing$ by convention] and a subset $\varepsilon(J) = \{
\varepsilon_j,
j\notin\sigma(J)\} \subset\{0, 1\}^{N-k}$ of size $N-k$, so that
\begin{eqnarray*}
J&=& \bigl\{ t=(t_1, \ldots, t_N) \in T
\dvtx a_j< t_j <b_j \mbox{ if} j\in
\sigma(J),
\nonumber
\\[-8pt]
\\[-8pt]
\nonumber
&&\hspace*{56pt}t_j = (1-\ep_j)a_j +
\ep_{j}b_{j} \mbox{ if} j\notin\sigma(J) \bigr\}.
\end{eqnarray*}
Denote by $\partial_k T$ the collection of all $k$-dimensional faces in
$T$, then
the interior of $T$ is given by $\stackrel{\circ}{T}=\partial_N T$ and
the boundary
of $T$ is given by $\partial T = \bigcup^{N-1}_{k=0}\bigcup_{J\in
\partial_k
T} J$. For
$J\in\partial_k T$, denote by
$\nabla X_{|J}(t)$ and $\nabla^2 X_{|J}(t)$ the column vector
$(X_{i_1} (t),
\ldots, X_{i_k} (t))^T_{i_1, \ldots, i_k \in\sigma(J)}$ and the
$k\times k$ matrix
$(X_{mn}(t))_{m, n \in\sigma(J)}$, respectively.

If $X(\cdot)\in C^2(\R^N)$ and it is a Morse function a.s. [cf.
Definition~9.3.1 in
\citet{AT07}], then according to Corollary~9.3.5 or pages
211--212 in \citet{AT07}, the Euler characteristic of the
excursion set
$A_u = \{t\in T\dvtx X(t)\geq u\}$ is given by
%
%e3.1 #&#
\begin{equation}
\label{Eq:def of Euler charac} \varphi(A_u)= \sum^N_{k=0}
\sum_{J\in\partial_k T}(-1)^k\sum
^k_{i=0} (-1)^i \mu_i(J)
\end{equation}
with
%
%e3.2 #&#
%e3.3 #&#
\begin{eqnarray}
\label{Eq:mu_iJ} %
\mu_i(J) & :=& \# \bigl\{ t\in J\dvtx X(t)
\geq u, \nabla X_{|J}(t)=0, \operatorname {index} \bigl(\nabla^2
X_{|J}(t)\bigr)=i,
\nonumber
\\[-8pt]
\\[-8pt]
\nonumber
&& \hspace*{113pt}\varepsilon^*_jX_j(t) \geq0 \mbox{ for all } j\notin
\sigma(J) \bigr\}, %
\end{eqnarray}
where $\ep^*_j=2\ep_j-1$ and the index of a matrix is defined as the
number of its negative
eigenvalues. We also define
%
%e3.4 #&#
\begin{equation}
\label{Eq:mu_iJ2} \wt{\mu}_i(J) := \# \bigl\{ t\in J\dvtx X(t)\geq u,
\nabla X_{|J}(t)=0, \operatorname {index} \bigl(\nabla^2
X_{|J}(t)\bigr)=i \bigr\}.
\end{equation}

It follows from (\ref{Eq:variogram}) that $\nu(t) = \operatorname{Var} (X(t))$.
Let $\sigma^2_T =
\sup_{t\in T} \nu(t) $ be the maximum variance. For any $t \in T$ and
$J\in\partial_k T$,
where $k \ge1$, let
%
%e3.5 #&#
%e3.6 #&#
%e3.7 #&#
%e3.8 #&#
\begin{eqnarray}
\label{Def:EJ} %
\La_J& =&(\lambda _{ij})_{i, j \in\sigma(J)}
= \operatorname{ Cov}\bigl(\nabla X_{|J}(t)\bigr),\nonumber\\
 \La_J(t)& =&
\bigl(\lambda _{ij}(t)\bigr)_{i, j \in\sigma(J)},\nonumber
\\
\theta_{J,t}^2 &=& \operatorname{Var} \bigl(X(t)| \nabla
X_{|J}(t)\bigr), \qquad\gamma_t^2 = \operatorname{Var}
\bigl(X(t)| \nabla X(t)\bigr),
\\
\{J_1, \ldots, J_{N-k}\}&= &\{1, \ldots, N\}\setminus
\sigma(J),\nonumber
\\
E(J) &=& \bigl\{(t_{J_1}, \ldots, t_{J_{N-k}})\in
\R^{N-k}\dvtx t_j\ep _j^*\geq0, j=
J_1, \ldots, J_{N-k}\bigr\}.\nonumber %
\end{eqnarray}
%
%Here, for each $t\in T$, the face $J$ such that $t\in J$ is determined
%by $t$, thus $\theta_t^2$ in fact
%depends only on $t$ although there is $J$ in its definition in
%\eqref{Def:EJ}.
Note that $\theta_{J,t}^2 \geq\gamma_t^2$ for all $t\in T$ and
$\theta
_{J,t}^2= \gamma_t^2$ if $J = \partial_N T$.
If $J = \{\tau\}\in\partial_0 T$ is a vertex, then $\nabla X_{|J}(t)$
is not defined and we set $\theta_{J, t}^2$ as
$\nu(t)$ by convention. Moreover, if $J= \{\tau\}\in\partial_0 T$,
then $E(\{\tau\})$ is a quadrant of $\R^N$ decided by the corresponding
$\varepsilon(\{\tau\}) \in\{0, 1\}^N$. In the sequel, we will write
$\theta_{J,t}^2$ as $\theta_t^2$ for simplicity of notation.
This will not cause any confusion because $\theta_{t}^2$ always appears
together with $t \in J$.

%By (\ref{ladiff}), we see that for every $t \in T$,
%\begin{equation}\label{Eq:La and corr}
%\La_J(t)-\La_J=\E\{X(t)\nabla^2 X_{|J}(t)\}.
%\end{equation}

For $t \in T$, let $C_{j}(t)$ be the $(1, j+1)$ entry of $(\operatorname{Cov}
(X(t), \nabla X(t)))^{-1}$, that is,
\[
C_{j}(t)= M_{1,j+1}(t)/\operatorname{ det Cov} \bigl(X(t), \nabla X(t)
\bigr),
\]
where $M_{1,j+1}(t)$ is the cofactor of the
$(1, j+1)$ entry, $\E\{X(t)X_j(t) \}$, in the covariance matrix $\operatorname
{Cov} (X(t), \nabla X(t))$. If $\{X(t), t \in\R^N\}$ is replaced by
a Gaussian field $\{Z(t), t \in\R^N\}$ with constant variance, the
independence of $Z(t)$ and $\nabla Z(t)$ for each $t$
implies that $M_{1,j+1}(t)$ and hence $C_{j}(t)$ is zero for all $j\ge1$.

Denote by $H_k(x)$ the Hermite polynomial of order $k$, that is,
$H_k(x) = (-1)^k e ^{x^2/2} \frac{d^k}{dx^k}( e^{-x^2/2} )$.\vspace*{1pt}
Then it can be verified directly [cf. Adler and Taylor (\citeyear
{AT07}), page~289] that
%
%e3.9 #&#
\begin{equation}
\label{Eq:Hermite} \int_u^\infty H_k(x)
e^{-x^2/2} \,dx = H_{k-1}(u) e^{-u^2/2},
\end{equation}
where $u>0$ and $k\geq1$. For a matrix $A$, let $|A|$ denote its determinant.
Let $\R_+ = [0,\infty)$, $\R_- = (-\infty, 0]$ and let $\Psi(u) =
(2\pi
)^{-1/2}\int_u^\infty e^{-x^2/2} \,dx$.

%s3.2 #&#
\subsection{Computing the mean Euler characteristic}\label{sec3.2}

The following lemma is an extension of Lemma~11.7.1 in \citet
{AT07}. It
provides a key step for computing the mean Euler characteristic in
Theorem~\ref{Thm:MEC} below,
and has a close connection with Theorem~\ref{Thm:MEC approximation 1}.
It follows from (\ref{Eq:expectation of sum}) that $(-1)^k\E\{\sum^k_{i=0} (-1)^i \wt{\mu}_i (J)\}$
is always positive. This fact will be used to approximate the expected
number of local maxima
above level $u$; see Lemma~\ref{Lem:mean M_u}.

%le3.1 #&#
\begin{lemma} \label{Lem:MEC}
Let $X = \{X(t), t\in\R^{N} \}$ be a centered Gaussian random field
with stationary
increments and $X(0) = 0$. Suppose conditions \textup{(H1)}, \textup{(H2)} and \textup{(H3$'$)} hold. Then for each $J
\in\partial_k T$ with $k \geq1$,
%
%e3.10 #&#
\begin{eqnarray}
\label{Eq:expectation of sum} %
&&\E \Biggl\{\sum^k_{i=0}
(-1)^i \wt{\mu}_i (J) \Biggr\}
\nonumber
\\[-8pt]
\\[-8pt]
\nonumber
&&\qquad= \frac
{(-1)^k}{(2\pi
)^{(k+1)/2}|\La_J|^{1/2}} \int
_J \frac{|\La_J-\La_J(t)|}{\theta^k_t} H_{k-1} \biggl(
\frac
{u}{\theta
_t} \biggr)e^{-u^2/(2\theta^2_t)} \,dt. %
\end{eqnarray}
\end{lemma}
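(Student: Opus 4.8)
The plan is to apply the Kac--Rice metatheorem (Adler--Taylor 2007, Theorem 11.2.1) to the point process of critical points of $X_{|J}$ inside the face $J$, weighted by the indicator $\{X(t)\ge u\}$ and by $(-1)^{\mathrm{index}(\nabla^2 X_{|J}(t))}$. Since $\sum_{i=0}^k(-1)^i\wt\mu_i(J)$ counts solutions $t\in J$ of $\nabla X_{|J}(t)=0$ with $X(t)\ge u$, each weighted by the sign $(-1)^i=\mathrm{sgn}\det(\nabla^2 X_{|J}(t))=\mathrm{sgn}|\nabla^2 X_{|J}(t)|$, the metatheorem (whose applicability is guaranteed by $({\bf H}1)$ for the regularity of the second derivatives and $({\bf H}3')$ for the nondegeneracy of $(X(t),\nabla X_{|J}(t))$, hence of the relevant Gaussian vectors) gives
\[
\E\bigg\{\sum_{i=0}^k(-1)^i\wt\mu_i(J)\bigg\}
=\int_J \E\Big\{\mathrm{sgn}\,|\nabla^2 X_{|J}(t)|\cdot |\nabla^2 X_{|J}(t)|\,\I_{\{X(t)\ge u\}}\,\Big|\,\nabla X_{|J}(t)=0\Big\}\,p_{\nabla X_{|J}(t)}(0)\,dt,
\]
where $p_{\nabla X_{|J}(t)}$ is the density of $\nabla X_{|J}(t)\sim N(0,\La_J)$, so that $p_{\nabla X_{|J}(t)}(0)=(2\pi)^{-k/2}|\La_J|^{-1/2}$. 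Because $\mathrm{sgn}(x)\cdot|x|=x$, the integrand collapses to $\E\{|\nabla^2 X_{|J}(t)|\,\I_{\{X(t)\ge u\}}\mid \nabla X_{|J}(t)=0\}$, which is the quantity we must compute; this is exactly the simplification that makes the alternating sum tractable (an unsigned count would leave an absolute value of a determinant inside the expectation).

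The key computational step is to evaluate the conditional expectation. First I would use the stationary-increments structure: by Lemma \ref{Lem:independent}, $X_m(t)$ and $X_{jk}(t)$ are independent for every $t$, so conditioning on $\nabla X_{|J}(t)=0$ does not affect the law of $\nabla^2 X_{|J}(t)$ — it only affects $X(t)$. Thus I condition only on $\nabla X_{|J}(t)=0$ inside the Gaussian pair $(X(t),\nabla X_{|J}(t))$: $X(t)\mid\{\nabla X_{|J}(t)=0\}$ is centered Gaussian with variance $\theta_t^2=\mathrm{Var}(X(t)\mid\nabla X_{|J}(t))$. Next, to handle the factor $\I_{\{X(t)\ge u\}}$ jointly with $|\nabla^2 X_{|J}(t)|$, I would regress $\nabla^2 X_{|J}(t)$ on $X(t)$: write $X_{mn}(t)=\E\{X_{mn}(t)X(t)\}\,X(t)/\nu(t)+\Delta_{mn}(t)$ with $\Delta(t)$ independent of $X(t)$; recalling from (\ref{ladiff}) that $\E\{X(t)X_{mn}(t)\}=\la_{mn}(t)-\la_{mn}$, the regression coefficient matrix on the face is $(\La_J(t)-\La_J)/\nu(t)$. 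Hence, conditionally on $X(t)=x$ (and on $\nabla X_{|J}(t)=0$, which is irrelevant for this step),
\[
\nabla^2 X_{|J}(t)\ \overset{d}{=}\ \frac{x}{\nu(t)}\big(\La_J(t)-\La_J\big)+\Delta(t),
\]
so $\E\{|\nabla^2 X_{|J}(t)|\mid X(t)=x\}$ is a polynomial in $x$ obtained by expanding the determinant. The remaining task is to evaluate $\E\{|M+\Delta(t)|\}$ for the deterministic rank-corrected matrix $M=\frac{x}{\nu(t)}(\La_J(t)-\La_J)$; here I would invoke the classical identity for expected determinants of Gaussian matrices together with Lemma \ref{Lem:symmetric} (the symmetry of the fourth moments of the entries of $\Delta$, after the appropriate change of basis diagonalizing $\La_J$), which is precisely the mechanism by which the answer organizes itself into a single Hermite polynomial. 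Concretely, after the substitution $x=\theta_t y$ one expects the polynomial in $y$ times the Gaussian density $e^{-y^2/2}$ to integrate, via (\ref{Eq:Hermite}), to a term proportional to $|\La_J-\La_J(t)|\,\theta_t^{-k}H_{k-1}(u/\theta_t)e^{-u^2/(2\theta_t^2)}$, the leading power $x^k$ in the determinant expansion contributing $(\La_J(t)-\La_J)$'s determinant and the lower-order terms vanishing after integration by the Hermite identity and the symmetry lemma. Collecting the constants — the $(2\pi)^{-k/2}|\La_J|^{-1/2}$ from $p_{\nabla X_{|J}(t)}(0)$, a $(2\pi)^{-1/2}\theta_t^{-1}$ from the Gaussian density of $X(t)\mid\nabla X_{|J}(t)=0$, the $\nu(t)^{-k}$ from the regression combining with $|\La_J(t)-\La_J|$ and with powers of $\theta_t$, and an overall $(-1)^k$ — gives the stated formula.

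The main obstacle will be the determinant computation: showing that $\E\{|\frac{x}{\nu(t)}(\La_J(t)-\La_J)+\Delta(t)|\}$, after integration against the Gaussian law of $X(t)$, produces exactly $(-1)^k\theta_t^{-k}|\La_J-\La_J(t)|H_{k-1}(u/\theta_t)e^{-u^2/(2\theta_t^2)}$ up to the constants above, with all the mixed and lower-degree terms disappearing. This is where Lemma \ref{Lem:symmetric} does the real work: after diagonalizing $\La_J$ by an orthogonal change of coordinates on the face, the entries of the transformed second-derivative matrix have fourth moments that are symmetric in all four indices, which lets one apply the Gaussian expected-determinant formula term by term and match it against the expansion of $\E\{X(t)^m\,\I_{\{X(t)\ge u\}}\mid\nabla X_{|J}=0\}$; the Hermite identity (\ref{Eq:Hermite}) then telescopes the resulting sum. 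This mirrors the proof of Lemma 11.7.1 in Adler--Taylor (2007), the difference being that there $\La_J(t)\equiv 0$ (constant variance forces $\E\{X(t)\nabla^2X(t)\}=-\La_J$ in a way that here is replaced by $\La_J(t)-\La_J$ via (\ref{ladiff})), so I would follow that argument closely while carrying the extra $\La_J(t)$ term through.
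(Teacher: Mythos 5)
Your overall route is the same as the paper's: apply the Kac--Rice metatheorem, use the alternating sum to replace $|\det\nabla^2 X_{|J}(t)|$ by the signed determinant, reduce to $\E\{\det\nabla^2 X_{|J}(t)\mid X(t)=x,\nabla X_{|J}(t)=0\}$, organize that conditional expectation into a Hermite polynomial via a Gaussian expected-determinant identity together with Lemma \ref{Lem:symmetric}, and integrate in $x$ using (\ref{Eq:Hermite}). The opening Kac--Rice step and the identification of the key tools are correct.

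The gap is in the regression step. You decompose $\nabla^2 X_{|J}(t)$ against $X(t)$ alone, obtaining the coefficient $(\La_J(t)-\La_J)/\nu(t)$, and assert that the additional conditioning on $\nabla X_{|J}(t)=0$ is irrelevant because $\nabla^2 X(t)$ is independent of $\nabla X(t)$. That independence only says that conditioning on $\nabla X_{|J}(t)=0$ \emph{alone} leaves the law of $\nabla^2 X_{|J}(t)$ unchanged; it does not reduce the joint conditioning on $(X(t),\nabla X_{|J}(t))$ to conditioning on $X(t)$. The residual $\Delta(t)=\nabla^2 X_{|J}(t)-\nu(t)^{-1}(\La_J(t)-\La_J)X(t)$ contains a copy of $X(t)$, and $X(t)$ is correlated with $\nabla X_{|J}(t)$ precisely because the variance is not constant ($\E\{X(t)X_j(t)\}=\tfrac12\nu_j(t)\neq 0$ in general), so conditioning on $\nabla X_{|J}(t)=0$ does change the law of $\Delta(t)$. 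The correct computation regresses on the pair $(X(t),\nabla X_{|J}(t))$: since the $(1,1)$ entry of $({\rm Cov}(X(t),\nabla X_{|J}(t)))^{-1}$ equals $1/\theta_t^2$, the conditional mean is $\frac{x}{\theta_t^2}(\La_J(t)-\La_J)$ and the conditional covariance of the (suitably transformed) entries is $\mathcal{S}_t(i,j,k,l)-\delta_{ij}\delta_{kl}/\theta_t^2$. With $\nu(t)$ in place of $\theta_t^2$, the leading term of the determinant expansion carries a spurious factor $(\theta_t^2/\nu(t))^k$, and the covariance loses the exact $-\delta_{ij}\delta_{kl}/\theta_t^2$ correction that the expected-determinant identity (Corollary 11.6.3 of Adler and Taylor) needs to collapse everything into $(-1)^k\theta_t^{-k}H_k(x/\theta_t)$; your own bookkeeping (``the $\nu(t)^{-k}$ from the regression'') already signals the mismatch, since no $\nu(t)$ appears in (\ref{Eq:expectation of sum}). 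A secondary imprecision: the change of basis that makes the conditional mean a multiple of the identity is $Q_t$ with $Q_t(\La_J-\La_J(t))Q_t=I_k$ (this is where $({\bf H}2)$ and Lemma \ref{Lem:posdef} enter), not an orthogonal diagonalization of $\La_J$. Both points are repairable, and once fixed your argument coincides with the paper's proof.
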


\begin{pf}Let $\mathcal{D}_i$ be the collection of all $k \times k
$ matrices with index $i$.
Recall the definition of $\wt{\mu}_i (J)$ in (\ref{Eq:mu_iJ2}), and
thanks to (H1) and
(H3$'$), we can apply the Kac--Rice metatheorem [cf. Theorem~11.2.1 or Corollary~11.2.2 in \citet{AT07}] to get that the
left-hand side of (\ref
{Eq:expectation of sum}) becomes
%
%e3.11 #&#
\begin{eqnarray}
\label{Eq:mean mu Kac-Rice} %
&&\int_{J} p_{\nabla X_{|J}(t)}(0) \,dt
\nonumber
\\[-8pt]
\\[-8pt]
\nonumber
&&\qquad{}\times
\sum^k_{i=0} (-1)^i \E\bigl\{
\bigl|\operatorname {det} \nabla^2 X_{|J}(t)\bigr| \mathbh{1}_{\{\nabla^2 X_{|J}(t)
\in\mathcal{D}_i\}}
\mathbh{1}_{\{
X(t)\geq u\}} | \nabla X_{|J}(t)=0 \bigr\}.\hspace*{-20pt} %
\end{eqnarray}
Note that on the set $\mathcal{D}_i$, the matrix $\nabla^2 X_{|J}(t)$
has $i$ negative eigenvalues,
which implies $(-1)^i|\operatorname{det} \nabla^2 X_{|J}(t)|= \operatorname{ det}
\nabla
^2 X_{|J}(t)$. Also,
$\bigcup_{i=0}^k \{\nabla^2 X_{|J}(t) \in\mathcal{D}_i\}=\Omega$ a.s.,
hence (\ref{Eq:mean mu Kac-Rice})
equals
%
%e3.12 #&#
%e3.13 #&#
\begin{eqnarray}
\label{Eq:mean mu} %
&&\int_{J} p_{\nabla X_{|J}(t)}(0) \,dt\,
\E\bigl\{ \operatorname{det} \nabla^2 X_{|J}(t)
\mathbh{1}_{\{X(t)\geq u\}} | \nabla X_{|J}(t)=0 \bigr\}
\nonumber
\\
&&\qquad = \int_{J} \,dt\int^\infty_u
\,dx \frac{e^{-x^2/(2\theta
^2_t)}}{(2\pi)^{(k+1)/2}|\La_J|^{1/2}\theta_t}\\
&&\quad\qquad{}\times \E\bigl\{\operatorname{det} \nabla^2
X_{|J}(t) | X(t)=x, \nabla X_{|J}(t)=0 \bigr\}.\nonumber %
\end{eqnarray}

Now we turn to computing $\E\{\operatorname{det} \nabla^2 X_{|J}(t) | X(t)=x,
\nabla X_{|J}(t)=0 \}$ in (\ref{Eq:mean mu}).
By Lemma~\ref{Lem:posdef}, under (H2), $\La-\La(t)$, and hence
$\La_J-\La_J(t)$ are
positive definite for every $t\in J$. Thus, there
exists a $k \times k $ positive definite matrix $Q_t$ such that
%
%e3.14 #&#
\begin{equation}
\label{Eq:decomposition of positivedef matrix} Q_t\bigl(\La_J-\La_J(t)
\bigr)Q_t = I_k,
\end{equation}
where $I_k$ is the $k\times k$ identity matrix. It follows from (\ref
{ladiff}) that
$\La_J(t)-\La_J=\E\{X(t)\nabla^2 X_{|J}(t)\}$. Hence,
\[
\E\bigl\{X(t) \bigl(Q_t \nabla^2 X_{|J}(t)
Q_t\bigr)_{ij}\bigr\} = -\bigl(Q_t\bigl(
\La_J-\La _J(t)\bigr)Q_t
\bigr)_{ij} = -\delta_{ij},
\]
where $\delta_{ij}$ is the Kronecker delta function. We write
%
%e3.15 #&#
\begin{equation}
\label{Eq:mean of det} \E\bigl\{\operatorname{det} \bigl(Q_t \nabla^2
X_{|J}(t) Q_t\bigr) | X(t)=x, \nabla X_{|J}(t)=0
\bigr\} = \E\bigl\{\operatorname{det} \Delta(t, x)\bigr\},\hspace*{-20pt}
\end{equation}
where $\Delta(t, x) = (\Delta_{ij}(t, x))_{i,j\in\sigma(J)}$ with all
elements $\Delta_{ij}(t, x)$
being Gaussian variables. To study $\Delta(t, x)$, we only need to find
the mean and covariance
of $\Delta_{ij}(t, x)$.
Note that $\nabla X(t)$ and $\nabla^2 X(t)$ are independent by Lemma~\ref{Lem:independent},
thus
%
%e3.16 #&#
%e3.17 #&#
%e3.18 #&#
\begin{eqnarray}
\label{Eq:expectation of Delta} %
\E\bigl\{ \Delta_{ij} (t, x) \bigr\} &=& \E
\bigl\{\bigl(Q_t \nabla^2 X_{|J}(t)
Q_t\bigr)_{ij} | X(t)=x, \nabla X_{|J}(t)=0
\bigr\}\nonumber
\\
&=&\bigl(\E\bigl\{X(t) \bigl(Q_t \nabla^2
X_{|J}(t) Q_t\bigr)_{ij}\bigr\}, 0, \ldots, 0
\bigr)\nonumber\\
&&{}\times \bigl( \operatorname {Cov} \bigl(X(t), \nabla X_{|J}(t)\bigr)
\bigr)^{-1} (x, 0, \ldots, 0)^T
\\
&=& (-\delta_{ij}, 0, \ldots, 0) \bigl( \operatorname{Cov} \bigl(X(t), \nabla
X_{|J}(t)\bigr) \bigr)^{-1} (x, 0, \ldots, 0)^T\nonumber
\\
&=& -\frac{x}{\theta^2_t}\delta_{ij},\nonumber %
\end{eqnarray}
where the last equality comes from the fact that the $(1,1)$ entry of
$( \operatorname{Cov} (X(t),\break
\nabla X_{|J}(t)) )^{-1}$ is $\operatorname{ det Cov} (\nabla X_{|J}(t))/\operatorname{ det
Cov} (X(t), \nabla X_{|J}(t))=1/\theta^2_t$.
For the covariance, we have
\begin{eqnarray*}
&&\E\bigl\{ \bigl(\Delta_{ij} (t, x)-\E\bigl\{
\Delta_{ij} (t, x)\bigr\}\bigr) \bigl(\Delta_{kl}(t, x)-\E
\bigl\{\Delta_{kl} (t, x)\bigr\}\bigr)\bigr\}
\\
&&\qquad= \E\bigl\{\bigl(Q_t \nabla^2 X_{|J}(t)
Q_t\bigr)_{ij}\bigl(Q_t \nabla^2
X_{|J}(t) Q_t\bigr)_{kl}\bigr\}\\
&&\qquad\quad{}-\bigl(\E\bigl
\{X(t) \bigl(Q_t \nabla^2 X_{|J}(t)
Q_t\bigr)_{ij}\bigr\}, 0, \ldots, 0\bigr)
\\
&&\qquad\quad{} \times\bigl( \operatorname{Cov} \bigl(X(t), \nabla X_{|J}(t)\bigr)
\bigr)^{-1} \bigl(\E\bigl\{ X(t) \bigl(Q_t
\nabla^2 X_{|J}(t) Q_t\bigr)_{kl}
\bigr\}, 0, \ldots, 0\bigr)^T
\\
&&\qquad= \mathcal{S}_t(i,j,k,l)-(-\delta_{ij}, 0, \ldots, 0)
\bigl( \operatorname{Cov} \bigl(X(t), \nabla X_{|J}(t)\bigr)
\bigr)^{-1} (-\delta_{kl}, 0, \ldots, 0)^T
\\
&&\qquad= \mathcal{S}_t(i,j,k,l)-\frac{\delta_{ij}\delta_{kl}}{\theta^2_t}, %
\end{eqnarray*}
where $\mathcal{S}_t$ is a symmetric function of $i$, $j$, $k$, $l$ by
Lemma~\ref{Lem:symmetric} with $A$ replaced by~$Q_t$.
Therefore, (\ref{Eq:mean of det}) becomes
\begin{eqnarray*}
&&\E \biggl\{\frac{1}{\theta_t^k}\operatorname{det} \bigl(\theta_t
Q_t\bigl(\nabla^2 X_{|J}(t)
\bigr)Q_t\bigr) \bigg| X(t)=x, \nabla X_{|J}(t)=0 \biggr\}\\
&&\qquad =
\frac{1}{\theta_t^k} \E \biggl\{\operatorname{det} \biggl(\wt{\Delta }(t)-
\frac
{x}{\theta_t}I_k \biggr) \biggr\}, %
\end{eqnarray*}
where $\wt{\Delta}(t)=(\wt{\Delta}_{ij}(t))_{i,j\in\sigma(J)}$
and all
entries $\wt{\Delta}_{ij}(t)$ are Gaussian variables satisfying
\[
\E\bigl\{\wt{\Delta}_{ij}(t)\bigr\}=0,\qquad \E\bigl\{\wt{
\Delta}_{ij}(t) \wt{\Delta }_{kl}(t)\bigr\}=
\theta_t^2\mathcal{S}_t(i,j,k,l)-
\delta_{ij}\delta_{kl}.
\]
By Corollary~11.6.3 in \citet{AT07}, (\ref{Eq:mean of det})
is equal to $(-1)^k\theta_t^{-k}\times  H_k(x/\theta_t)$, hence
\begin{eqnarray*}
&&\E\bigl\{\operatorname{det} \nabla^2 X_{|J}(t) |
X(t)=x, \nabla X_{|J}(t)=0 \bigr\}
\\
&&\qquad= \E\bigl\{\operatorname{det} \bigl(Q^{-1}_tQ_t
\nabla^2 X_{|J}(t)Q_tQ^{-1}_t
\bigr) | X(t)=x, \nabla X_{|J}(t)=0 \bigr\}
\\
&&\qquad= \bigl|\La_J-\La_J(t)\bigr| \E\bigl\{\operatorname{det}
\bigl(Q_t\nabla^2 X_{|J}(t)Q_t
\bigr) | X(t)=x, \nabla X_{|J}(t)=0 \bigr\}
\\
&&\qquad= \frac{(-1)^k}{\theta_t^k} \bigl|\La_J-\La_J(t)\bigr|H_k
\biggl(\frac
{x}{\theta
_t} \biggr). %
\end{eqnarray*}
Plugging this into (\ref{Eq:mean mu}) and applying (\ref{Eq:Hermite}),
we obtain the desired result.
\end{pf}

The following is the main theorem of this section, which is an
extension of Theorem~11.7.2 of
\citet{AT07} to Gaussian random fields with stationary
increments. Notice that in
\eqref{Eq:MEC}, for every $\{t\}\in\partial_0 T$, $\nabla X(t) \in
E(\{
t\})$ specifies the signs of
the partial derivatives $X_j(t)$ ($j= 1, \ldots, N$) and, for $J \in
\partial_k T$, the set
$\{J_1, \ldots, J_{N-k}\}$ is defined in \eqref{Def:EJ}.
%Moreover, we recall that, if the
%Gaussian field $X$ has constant variance, then $C_j(t)=0$ for all $j
%\ge1$ and
%$\gamma_t^2 =$ constant. Hence \eqref{Eq:MEC} generalizes the mean
%Euler
%characteristic formula for stationary Gaussian fields in Adler and
%Taylor (2007).

%th3.2 #&#
\begin{theorem} \label{Thm:MEC}
Let $X = \{X(t), t\in\R^{N} \}$ be a centered Gaussian random field
with stationary
increments and $X(0) = 0$. Suppose conditions \textup{(H1)}, \textup{(H2)} and \textup{(H3$'$)} hold. Then
%
%e3.19 #&#
%e3.20 #&#
%e3.21 #&#
%e3.22 #&#
\begin{eqnarray}
\label{Eq:MEC} %
\E\bigl\{\varphi(A_u)\bigr\}&=& \sum
_{\{t\}\in\partial_0 T} \P\bigl(X(t)\geq u, \nabla X(t) \in E\bigl(\{t
\}\bigr)\bigr) + \sum^N_{k=1}\sum
_{J\in\partial_k T} \frac{1}{(2\pi)^{k/2}|\La
_J|^{1/2}}\nonumber
\\
&&{} \times\int_J \,dt\int^\infty_u
\,dx\int\cdots\int_{E(J)} \,dy_{J_1}\cdots
\,dy_{J_{N-k}}\frac{|\La_J-\La_J(t)|}{\gamma^k_t}
\nonumber
\\[-8pt]
\\[-8pt]
\nonumber
&&{} \times H_k \biggl(\frac{x}{\gamma_t} + \gamma_tC_{J_1}(t)y_{J_1}
+ \cdots+ \gamma_tC_{J_{N-k}}(t)y_{J_{N-k}} \biggr)
\\
& &{}\times p_{X(t), X_{J_1}(t), \ldots, X_{J_{N-k}}(t)}\bigl(x, y_{J_1}, \ldots, y_{J_{N-k}}|
\nabla X_{|J}(t)=0\bigr). \nonumber %
\end{eqnarray}
\end{theorem}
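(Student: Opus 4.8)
The plan is to apply the Euler characteristic decomposition \eqref{Eq:def of Euler charac}, take expectations face-by-face, and use \eqref{Eq:mu_i(J)} together with the Kac--Rice metatheorem to turn each term into an integral. The vertex terms ($k=0$) are immediate: from \eqref{Eq:mu_i(J)} with $\sigma(J)=\emptyset$, $\mu_0(\{t\}) = \mathbbm{1}\{X(t) \ge u,\ \ep_j^* X_j(t) \ge 0 \text{ for all } j\}$, and its expectation is precisely $\P(X(t) \ge u, \nabla X(t) \in E(\{t\}))$ by the definition of $E(\{t\})$ as the corresponding quadrant. So the first sum in \eqref{Eq:MEC} is handled by inspection.

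For a face $J \in \partial_k T$ with $k \ge 1$, I would first observe that $\sum_{i=0}^k (-1)^i \mu_i(J)$ is the same as $\sum_{i=0}^k (-1)^i \wt\mu_i(J)$ but with the extra indicator constraints $\ep_j^* X_j(t) \ge 0$ ($j \notin \sigma(J)$) inserted; equivalently, with $\nabla X_{|J}(t) = 0$ replaced by the event that $\nabla X_{|J}(t)=0$ and the remaining $N-k$ partial derivatives lie in $E(J)$. Applying the Kac--Rice metatheorem (Theorem 11.2.1 / Corollary 11.2.2 in Adler--Taylor (2007)), valid under $({\bf H}1)$ and $({\bf H}3')$, the expectation $\E\{\sum_{i=0}^k (-1)^i \mu_i(J)\}$ becomes
\[
\int_J p_{\nabla X_{|J}(t)}(0)\, dt\ \E\Big\{ \det \nabla^2 X_{|J}(t)\, \mathbbm{1}_{\{X(t) \ge u\}} \prod_{m=1}^{N-k}\mathbbm{1}_{\{\ep_{J_m}^* X_{J_m}(t) \ge 0\}}\, \Big|\, \nabla X_{|J}(t) = 0 \Big\},
\]
using, exactly as in the proof of Lemma \ref{Lem:MEC}, that $(-1)^i |\det \nabla^2 X_{|J}(t)| = \det \nabla^2 X_{|J}(t)$ on $\mathcal{D}_i$ and that the $\mathcal{D}_i$ partition $\Omega$. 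Writing $p_{\nabla X_{|J}(t)}(0) = (2\pi)^{-k/2}|\La_J|^{-1/2}$ and conditioning further on $X(t) = x$ and on $X_{J_m}(t) = y_{J_m}$, this is the right-hand side of \eqref{Eq:MEC} provided one shows
\[
\E\{\det \nabla^2 X_{|J}(t)\,|\, X(t)=x,\ X_{J_1}(t)=y_{J_1}, \ldots, X_{J_{N-k}}(t)=y_{J_{N-k}},\ \nabla X_{|J}(t)=0\}
\]
equals $(-1)^k \gamma_t^{-k}|\La_J - \La_J(t)|\, H_k\big(x/\gamma_t + \gamma_t \sum_m C_{J_m}(t) y_{J_m}\big)$, after which one sums over $J$ and over $k$ and recombines with the overall sign $(-1)^k$ from \eqref{Eq:def of Euler charac} (which cancels the $(-1)^k$ in the conditional-determinant formula).

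The computation of that conditional expectation is the main obstacle, and it is where the non-constant-variance subtlety enters. I would mimic the proof of Lemma \ref{Lem:MEC}: pick the positive-definite $Q_t$ with $Q_t(\La_J - \La_J(t))Q_t = I_k$, note $\E\{X(t)(Q_t \nabla^2 X_{|J}(t) Q_t)_{ij}\} = -\delta_{ij}$ from \eqref{ladiff}, and use Lemma \ref{Lem:independent} to record that $\nabla^2 X(t)$ is independent of the full gradient $\nabla X(t)$ — so conditioning on $X_{J_m}(t)=y_{J_m}$ affects the determinant only through the conditioning on $X(t)$, but the coefficient relating the conditional mean of $X(t)$ to $(x, y_{J_1}, \ldots, y_{J_{N-k}}, 0, \ldots, 0)$ now involves the off-diagonal entries of $(\mathrm{Cov}(X(t), \nabla X_{|J}(t), X_{J_1}(t), \ldots, X_{J_{N-k}}(t)))^{-1}$, which is exactly where the constants $C_{J_m}(t)$ come from (these vanish in the constant-variance case because $X(t) \perp \nabla X(t)$). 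Thus the conditional mean of $(Q_t \nabla^2 X_{|J}(t) Q_t)_{ij}$ is $-\delta_{ij}$ times a linear combination $\tfrac{1}{\gamma_t^2}x + C_{J_1}(t)y_{J_1} + \cdots$ — one has to verify the precise scalar, matching it to $\gamma_t^{-1}(x/\gamma_t + \gamma_t\sum_m C_{J_m}(t)y_{J_m})$ via the cofactor expressions for $C_j(t)$ and the identity ${\rm det Cov}(\nabla X_{|J}(t))/{\rm det Cov}(X(t), \nabla X_{|J}(t)) = 1/\theta_t^2$, noting $\theta_t$ here must become $\gamma_t$ since we are conditioning on the full gradient. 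The conditional covariance of $Q_t \nabla^2 X_{|J}(t) Q_t$ is $\mathcal{S}_t(i,j,k,l) - \delta_{ij}\delta_{kl}/\gamma_t^2$ by Lemma \ref{Lem:symmetric}, just as before. Rescaling by $\gamma_t$ and invoking Corollary 11.6.3 in Adler--Taylor (2007) on the resulting Gaussian matrix with mean $-(x/\gamma_t + \gamma_t\sum_m C_{J_m}(t)y_{J_m})I_k$ and the stated covariance yields the Hermite polynomial $H_k$ of that argument, completing the identification. Finally one checks the Morse-function hypothesis needed for \eqref{Eq:def of Euler charac}: under $({\bf H}3')$ the field $X$ restricted to each face is a.s. Morse (no critical point on the boundary of a face, nondegenerate Hessian at critical points), so \eqref{Eq:def of Euler charac} applies and the term-by-term expectation is justified.
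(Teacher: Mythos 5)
Your proposal is correct and follows essentially the same route as the paper's proof: vertices handled directly, the Kac--Rice metatheorem applied face-by-face with the sign cancellation on the index sets $\mathcal{D}_i$, and the conditional determinant computed via the normalization $Q_t(\La_J-\La_J(t))Q_t=I_k$, the independence of $\nabla X(t)$ and $\nabla^2 X(t)$, and Corollary 11.6.3 of Adler--Taylor, with the constants $C_{J_m}(t)$ arising exactly as you describe from the off-diagonal entries of the inverse covariance matrix. The only detail you leave to verification --- that the conditional mean equals $-\delta_{ij}\gamma_t^{-2}(x+\gamma_t^2\sum_m C_{J_m}(t)y_{J_m})$ --- is confirmed by the cofactor definition of $C_j(t)$ and the identity for the $(1,1)$ entry of the inverse covariance, just as in Lemma \ref{Lem:MEC}.
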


%re1 #&#
\begin{remark} If $Z = \{Z(t), t\in\R^{N} \}$ is a smooth centered
stationary Gaussian random field, then
the mean Euler characteristic of the excursion set $\{t \in T\dvtx Z(t)
\ge
u\}$ is given by Theorem~11.7.2 in \citet{AT07}. Applying Theorem~\ref{Thm:MEC} to $X(t) = Z(t) - Z(0)$,
(\ref{Eq:MEC}) computes the mean Euler
characteristic of $A_u = \{t \in T\dvtx Z(t)- Z(0) \ge u\}$.
\end{remark}

\begin{pf*}{Proof of Theorem~\ref{Thm:MEC}}
According to Corollary~11.3.2 in \citet{AT07}, (H1) and (H3$'$)
imply that $X$ is a Morse function a.s. It follows from (\ref{Eq:def of
Euler charac}) that
%
%e3.23 #&#
\begin{equation}
\label{Eq:mean EC} \E\bigl\{\varphi(A_u)\bigr\}=\sum
^N_{k=0}\sum_{J\in\partial_k T}(-1)^k
\E \Biggl\{ \sum^k_{i=0}
(-1)^i \mu_i(J) \Biggr\}.
\end{equation}
If $J \in\partial_0 T$, say $J= \{t\}$, it turns out that $\E\{\mu_0
(J)\}= \P (X(t)\geq u,
\nabla X(t) \in E(\{t\})  )$. If $J \in\partial_k T$ with $k \geq
1$, we apply the Kac--Rice
metatheorem in \citet{AT07} to obtain that the expectation
on the right-hand side of (\ref{Eq:mean EC}) becomes
%
%e3.24 #&#
%e3.25 #&#
%e3.26 #&#
%e3.27 #&#
%e3.28 #&#
\begin{eqnarray}
\label{Eq:mean mu 2} %
&& \int_J p_{\nabla X_{|J}(t)}(0) \,dt\nonumber\\
&&\quad{}\times \sum^k_{i=0} (-1)^i \E\bigl\{
\bigl|\operatorname {det} \nabla^2 X_{|J}(t)\bigr| \mathbh{1}_{\{\nabla^2 X_{|J}(t)
\in\mathcal{D}_i\}}
\mathbh{1}_{\{ (X_{J_1}(t), \ldots,
X_{J_{N-k}}(t))\in E(J)\}}\nonumber
\\
&&\hspace*{167pt}\qquad\quad{} \times\mathbh{1}_{\{
X(t)\geq u\}} | \nabla X_{|J}(t)=0 \bigr\}\nonumber
\\
& &\qquad= \frac{1}{(2\pi)^{k/2}|\La_J|^{1/2}}\int_J \,dt\int^\infty_u
\,dx\int\cdots\int_{E(J)} \,dy_{J_1}\cdots
\,dy_{J_{N-k}}
\\
&&\qquad\quad{} \times\E\bigl\{\operatorname{det} \nabla^2 X_{|J}(t) | X(t)=x,
X_{J_1}(t)=y_{J_1}, \ldots, X_{J_{N-k}}(t)=y_{J_{N-k}},\nonumber\\
&&\hspace*{270pt}
\nabla X_{|J}(t)=0 \bigr\}\nonumber
\\
&&\qquad\quad{} \times p_{X(t), X_{J_1}(t), \ldots, X_{J_{N-k}}(t)}\bigl(x, y_{J_1}, \ldots, y_{J_{N-k}}|
\nabla X_{|J}(t)=0\bigr).\nonumber %
\end{eqnarray}
For fixed $t$, let $Q_t$ be the positive definite matrix in (\ref
{Eq:decomposition of positivedef matrix}).
Then, similar to the proof in Lemma~\ref{Lem:MEC}, we can write
\begin{eqnarray*}
&&\E\bigl\{\operatorname{det} \bigl(Q_t \nabla^2
X_{|J}(t) Q_t\bigr) | X(t)=x, X_{J_1}(t)=y_{J_1},
\ldots, X_{J_{N-k}}=y_{J_{N-k}},\\
&&\hspace*{244pt} \nabla X_{|J}(t)=0 \bigr
\} %
\end{eqnarray*}
as $\E\{\operatorname{det} \overline{\Delta}(t, x) \}$, where $\overline
{\Delta
}(t, x)$ is a matrix consisting of Gaussian entries
$\overline{\Delta}_{ij}(t, x)$ with mean
%
%e3.29 #&#
%e3.30 #&#
%e3.31 #&#
%e3.32 #&#
\begin{eqnarray}
\label{Eq:expectation of Delta 2} %
&&\E\bigl\{\bigl(Q_t \nabla^2
X_{|J}(t) Q_t\bigr)_{ij} | X(t)=x,
X_{J_1}(t)=y_{J_1}, \ldots, X_{J_{N-k}}=y_{J_{N-k}},\nonumber\\
&&\hspace*{238pt}\nabla X_{|J}(t)=0 \bigr\}\nonumber
\\
&&\qquad= (-\delta_{ij}, 0, \ldots, 0) \bigl( \operatorname{Cov} \bigl(X(t),
X_{J_1}(t), \ldots, X_{J_{N-k}}(t), \nabla X_{|J}(t)
\bigr) \bigr)^{-1}
\\
&&\qquad\quad{}\times(x, y_{J_1}, \ldots, y_{J_{N-k}},0, \ldots,
0)^T\nonumber
\\
&&\qquad= -\frac{\delta_{ij}}{\gamma^2_t}\bigl(x+\gamma^2_tC_{J_1}(t)y_{J_1}
+ \cdots+ \gamma^2_tC_{J_{N-k}}(t)y_{J_{N-k}}
\bigr),\nonumber %
\end{eqnarray}
and covariance
\begin{eqnarray*}
&&\E\bigl\{ \bigl(\overline{\Delta}_{ij} (t, x)-\E\bigl\{\overline{
\Delta}_{ij} (t, x)\bigr\} \bigr) \bigl(\overline{\Delta}_{kl}(t,
x)-\E\bigl\{\overline{\Delta}_{kl} (t, x)\bigr\} \bigr)\bigr\} \\
&&\qquad=
\mathcal{S}_t(i,j,k,l)-\frac{\delta_{ij}\delta_{kl}}{\gamma^2_t}.
\end{eqnarray*}
Following the same procedure in the proof of Lemma~\ref{Lem:MEC}, we
obtain that the last
conditional expectation in (\ref{Eq:mean mu 2}) is equal to
%
%e3.33 #&#
\begin{eqnarray}
\label{Eq:formula for det 2}&& \frac{(-1)^k|\La_{J}-\La_{J}(t)|}{\gamma_t^k}
\nonumber
\\[-8pt]
\\[-8pt]
\nonumber
&&\qquad{}\times
H_k \biggl(\frac
{x}{\gamma_t}+
\gamma_tC_{J_1}(t)y_{J_1} + \cdots+ \gamma
_tC_{J_{N-k}}(t)y_{J_{N-k}} \biggr).
\end{eqnarray}
Plugging this into (\ref{Eq:mean mu 2}) and (\ref{Eq:mean EC}) yields
the desired result.
\end{pf*}

%re2 #&#
\begin{remark} Usually, for a nonstationary (including
constant-variance) Gaussian field $X$ on $\R^N$,
its mean Euler characteristic involves at least the third-order
derivatives of the covariance function.
For Gaussian random fields with stationary increments, as shown in
Lemma~\ref{Lem:independent},
$\E\{X_{ij}(t)X_k(t)\}=0$ and $\E\{X_{ij}(t)X_{kl}(t)\}$ is symmetric
in $i$, $j$, $k$, $l$. Hence,
the mean Euler characteristic becomes simpler, containing only up to
second-order derivatives
of the covariance function. This can also be seen from the spectral
representation \eqref{Eq:rep}
which implies that $\nabla X$ and $\nabla^2 X$ are stationary. In
various practical applications,
(\ref{Eq:MEC}) can be simplified with only an exponentially smaller
difference. See the discussions
in Section~\ref{sec5}.
\end{remark}

%s4 #&#
\section{Excursion probability}\label{sec4}
%s4.1 #&#
\subsection{Preliminaries}\label{sec4.1}
As in Section~\ref{sec3.1}, we decompose $T$ into its faces as $T= \bigcup_{k=0}^N\bigcup_{J\in\partial_k T}J$.
For $k \ge1$ and any $J\in\partial_k T$, define the number of \emph
{extended outward maxima}
above level $u$ as
\begin{eqnarray*}
M_u^E (J) & := &\# \bigl\{ t\in J\dvtx X(t)
\geq u, \nabla X_{|J}(t)=0, \operatorname {index} \bigl(\nabla^2
X_{|J}(t)\bigr)=k,
\\
&& \hspace*{118pt}\varepsilon^*_jX_j(t) \geq0 \mbox{ for all} j\notin
\sigma(J) \bigr\}. %
\end{eqnarray*}
In fact, $M_u^E (J)$ is the same as $\mu_k (J)$ defined in (\ref
{Eq:mu_iJ}) with $i= k$. For $k=0$ and any
$\{t\} \in\partial_0 T$, let
\[
M_u^E \bigl(\{t\}\bigr) := \mathbh{1}_{\{X(t)\geq u, \nabla X(t) \in E(\{t\})
\}}=
\mathbh{1}_{\{X(t)\geq u,
\varepsilon^*_j X_j(t) \ge0,\forall j = 1, \ldots, N\}}.
\]
One can show easily that, under conditions (H1) and
(H3$'$),
\[
\Bigl\{\sup_{t\in T} X(t) \geq u \Bigr\} = \bigcup
_{k=0}^N\bigcup_{J\in
\partial_k T}
\bigl\{M_u^E (J) \geq1 \bigr\} \qquad\mbox{ a.s.}
\]
It follows that
%
%e4.1 #&#
\begin{equation}
\label{Ineq:upperbound} \quad\P \Bigl\{\sup_{t\in T} X(t) \geq u \Bigr\} \leq
\sum_{k=0}^N\sum
_{J\in
\partial_k T} \P\bigl\{M_u^E (J) \geq1 \bigr
\} \leq\sum_{k=0}^N\sum
_{J\in\partial_k T} \E\bigl\{M_u^E (J) \bigr\}.
\end{equation}
On the other hand, by the Bonferroni inequality,
\[
\P \Bigl\{\sup_{t\in T} X(t) \geq u \Bigr\} \geq\sum
_{k=0}^N\sum_{J\in
\partial_k T} \P
\bigl\{M_u^E (J) \geq1 \bigr\} - \sum
_{J\neq J'} \P\bigl\{M_u^E (J) \geq1,
M_u^E \bigl(J'\bigr) \geq1 \bigr\}.
\]
Note that [cf. \citet{Piterbarg96}]
\[
\E\bigl\{M_u^E (J) \bigr\} - \P\bigl
\{M_u^E (J) \geq1 \bigr\} \le\tfrac{1}{2}\E\bigl
\{M_u^E (J) \bigl(M_u^E (J) -1
\bigr)\bigr\},
\]
together with the obvious bound $\P\{M_u^E (J) \geq1 , M_u^E (J')
\geq
1 \} \leq\E\{M_u^E (J)\times  M_u^E (J') \}$,
we obtain the following lower bound for the excursion probability
%
%e4.2 #&#
%e4.3 #&#
\begin{eqnarray}
\label{Ineq:lowerbound} %
\P \Bigl\{\sup_{t\in T} X(t) \geq u
\Bigr\} & \geq&\sum_{k=0}^N\sum
_{J\in\partial_k T} \biggl( \E\bigl\{M_u^E (J)
\bigr\} - \frac{1}{2}\E\bigl\{M_u^E (J)
\bigl(M_u^E (J) -1\bigr) \bigr\} \biggr)
\nonumber
\\[-8pt]
\\[-8pt]
\nonumber
&&{} - \sum_{J\neq J'} \E\bigl\{M_u^E
(J) M_u^E \bigl(J'\bigr) \bigr\}.
\end{eqnarray}
Define the number of \emph{local maxima} above level $u$ as
\[
M_u (J) := \# \bigl\{ t\in J\dvtx X(t)\geq u, \nabla
X_{|J}(t)=0, \operatorname{ index} \bigl(\nabla^2
X_{|J}(t)\bigr)=k \bigr\}.
\]
Then obviously $M_u (J) \geq M^E_u (J)$ and $M_u (J)$ is the same as
$\wt{\mu}_k (J)$ defined in (\ref{Eq:mu_iJ2})
with $i= k$. It follows similarly that
%
%e4.4 #&#
%e4.5 #&#
\begin{eqnarray}
\label{Ineq:lowerbound and upperbound} %
\sum_{k=0}^N
\sum_{J\in\partial_k T} \E\bigl\{M_u (J) \bigr\} &\geq&
\P \Bigl\{\sup_{t\in T} X(t) \geq u \Bigr\}
\nonumber
\\
&\geq&\sum_{k=0}^N\sum
_{J\in\partial_k T} \biggl( \E\bigl\{M_u (J) \bigr\}-
\frac
{1}{2}\E\bigl\{M_u (J) \bigl(M_u (J) -1
\bigr)\bigr\} \biggr) \\
&&{}- \sum_{J\neq J'} \E\bigl
\{M_u (J) M_u \bigl(J'\bigr) \bigr\}.\nonumber
\end{eqnarray}

We will use (\ref{Ineq:upperbound}) and (\ref{Ineq:lowerbound}) to
estimate the excursion
probability for the general case in Theorem~\ref{Thm:MEC approximation 2}.
Inequalities in (\ref{Ineq:lowerbound and upperbound}) provide another
method to
approximate the excursion probability in some special cases; see Theorem~\ref{Thm:MEC approximation 1}. The advantage of (\ref{Ineq:lowerbound
and upperbound})
is that the principal term induced by $\sum_{k=0}^N\sum_{J\in
\partial_k
T} \E\{M_u (J)\}$
is much easier to compute compared with the one induced by $\sum_{k=0}^N\sum_{J\in\partial_k T} \E\{M_u^E (J) \}$.

%s4.2 #&#
\subsection{Estimating the moments: Major terms and error terms}\label{sec4.2}

The following two lemmas provide the estimations for the principal
terms in approximating the excursion probability.

%le4.1 #&#
\begin{lemma}\label{Lem:mean M_u}
Let $X$ be a Gaussian field as in Theorem~\ref{Thm:MEC}. Then for each
$J\in\partial_k T$ with $k\geq1$, there exists some constant $\alpha
>0$ such that
%
%e4.6 #&#
\begin{eqnarray}
\label{Eq:mean M_uJ} %
&&\E\bigl\{M_u(J)\bigr\} \nonumber\\
&&\qquad=
\frac{1}{(2\pi)^{(k+1)/2}|\La_J|^{1/2}}\\
&&\qquad\quad{}\times\int_{J} \frac
{|\La_J-\La_J(t)|}{\theta^k_t}
H_{k-1} \biggl(\frac{u}{\theta_t} \biggr)e^{-u^2/{(2\theta^2_t)}} \,dt\bigl(1+ o
\bigl(e^{-\alpha u^2}\bigr)\bigr).\nonumber %
\end{eqnarray}
\end{lemma}

\begin{pf} Following the notation in the proof of Lemma~\ref
{Lem:MEC}, we obtain similarly that
%
%e4.7 #&#
%e4.8 #&#
\begin{eqnarray}
\label{Eq:mean mu 3} %
&&\E\bigl\{M_u (J)\bigr\}\nonumber\\
&&\qquad = \int
_{J} p_{\nabla X_{|J}(t)}(0) \,dt \E\bigl\{ \bigl|\operatorname {det}
\nabla^2 X_{|J}(t)\bigr| \mathbh{1}_{\{\nabla^2 X_{|J}(t) \in\mathcal{D}_k\}}
\mathbh{1}_{\{
X(t)\geq u\}} | \nabla X_{|J}(t)=0 \bigr\}\hspace*{-20pt}
\nonumber
\\[-8pt]
\\[-8pt]
\nonumber
&&\qquad=\int_{J} \,dt\int^\infty_u
\,dx \frac{(-1)^ke^{-x^2/(2\theta
^2_t)}}{(2\pi
)^{(k+1)/2}|\La_J|^{1/2}\theta_t}\\
&&\qquad\quad{}\times \E\bigl\{\operatorname{det} \nabla^2
X_{|J}(t) \mathbh{1}_{\{\nabla^2 X_{|J}(t)
\in\mathcal{D}_k\}} | X(t)=x, \nabla
X_{|J}(t)=0 \bigr\}.\nonumber %
\end{eqnarray}
Recall that $Q_t$ is the $k\times k$ positive definite matrix in (\ref
{Eq:decomposition of positivedef matrix}).
%$\nabla^2 X_{|J}(t)= Q^{-1}_tQ_t\nabla^2 X_{|J}(t)Q_tQ^{-1}_t$ and
We write (\ref{Eq:expectation of Delta}) as
\[
\E\bigl\{Q_t \nabla^2 X_{|J}(t)
Q_t | X(t)=x, \nabla X_{|J}(t)=0 \bigr\} = -
\frac
{x}{\theta^2_t}I_k.
\]
Make change of variables
\[
V(t) = Q_t\nabla^2 X_{|J}(t)Q_t
+ \frac{x}{\theta^2_t}I_k.
\]
Then $(V(t)|X(t)=x, \nabla X_{|J}(t)=0)$ is a Gaussian matrix
whose mean is 0 and covariance is the same as that of $(Q_t \nabla^2
X_{|J}(t) Q_t | X(t)=x, \nabla X_{|J}(t)=0)$.
Write $V(t) = (V_{ij}(t))_{1\leq i, j\leq k}$ and denote the density of
Gaussian vectors $((V_{ij}(t))_{1\leq i\leq j\leq k}|X(t)=x, \nabla
X_{|J}(t)=0)$
by $h_t(v)$, where $v=(v_{ij})_{1\leq i\leq j\leq k}\in\R
^{k(k+1)/2}$. Then
%
%e4.9 #&#
%e4.10 #&#
%e4.11 #&#
\begin{eqnarray}
\label{Eq:expectation of det} %
&&\E \bigl\{\operatorname{det} \bigl(Q_t
\nabla^2 X_{|J}(t)Q_t\bigr)
\mathbh{1}_{\{\nabla^2
X_{|J}(t) \in\mathcal{D}_k\}} | X(t)=x, \nabla X_{|J}(t)=0 \bigr\}
\nonumber\\
&&\qquad = \E\bigl\{\operatorname{det} \bigl(Q_t\nabla^2
X_{|J}(t)Q_t\bigr) \mathbh{1}_{\{
Q_t\nabla
^2 X_{|J}(t)Q_t \in\mathcal{D}_k\}} | X(t)=x,
\nabla X_{|J}(t)=0 \bigr\}
\\
&&\qquad = \int_{\{v\dvtx(v_{ij})-({x}/{\theta^2_t})I_k \in\mathcal
{D}_k\}} \operatorname{det} \biggl((v_{ij})-
\frac{x}{\theta^2_t} I_k \biggr) h_t(v) \,dv, \nonumber%
\end{eqnarray}
where $(v_{ij})$ is the abbreviation for the matrix $(v_{ij})_{1\leq i,
j\leq k}$.
Since $\{\theta^2_t\dvtx t\in T\}$ is bounded, there exists a constant
$c>0$ such that
\[
(v_{ij})-\frac{x}{\theta^2_t}I_k \in\mathcal{D}_k\qquad
\forall\bigl\| (v_{ij})\bigr\|:= \Biggl(\sum_{i,j=1}^k
v_{ij}^2 \Biggr)^{1/2} <\frac{x}{c}.
\]
Thus, we can write (\ref{Eq:expectation of det}) as
%
%e4.12 #&#
%e4.13 #&#
\begin{eqnarray}
\label{Eq:expectation of det 2} %
&&\int_{\R^{k(k+1)/2}} \operatorname{det}
\biggl((v_{ij})-\frac{x}{\theta^2_t}I_k \biggr)
h_t(v) \,dv \nonumber\\
&&\quad{}- \int_{\{v\dvtx(v_{ij})-
({x}/{\theta^2_t})I_k \notin\mathcal{D}_k\}} \operatorname{det}
\biggl((v_{ij})-\frac{x}{\theta^2_t}I_k \biggr)
h_t(v) \,dv
\\
&&\qquad = \E\bigl\{\operatorname{det} \bigl(Q_t\nabla^2
X_{|J}(t)Q_t\bigr) | X(t)=x, \nabla X_{|J}(t)=0
\bigr\} + Z(t,x),\nonumber %
\end{eqnarray}
where $Z(t,x)$ is the second integral in the first line of (\ref
{Eq:expectation of det 2}) and it satisfies
\[
\bigl|Z(t,x)\bigr| \leq\int_{\|(v_{ij})\|\geq{x}/{c}} \biggl|\operatorname{det}
\biggl((v_{ij})-
\frac{x}{\theta^2_t}I_k \biggr) \biggr| h_t(v) \,dv.
\]
Denote by $G(t)$ the covariance matrix of $((V_{ij}(t))_{1\leq i\leq
j\leq k}|X(t)=x, \nabla X_{|J}(t)=0)$. Then
by Lemma~\ref{Lem:eigenvalue} in the \hyperref[app]{Appendix}, the
eigenvalues of
$G(t)$ and those of $(G(t))^{-1}$ are
bounded for all $t\in T$. It follows that there exists some constant
$\alpha'>0$ such that $h_t(v) =
o(e^{-\alpha' \|(v_{ij})\|^2})$ and hence $|Z(t,x)| = o(e^{-\alpha
x^2})$ for some constant $\alpha>0$ uniformly
for all $t\in T$. Combining this with (\ref{Eq:mean mu 3}), (\ref
{Eq:expectation of det}), (\ref{Eq:expectation of det 2})
and the proof of Lemma~\ref{Lem:MEC} yields (\ref{Eq:mean M_uJ}).
\end{pf}

%le4.2 #&#
\begin{lemma}\label{Lem:mean M_u^E}
Let $X$ be a Gaussian field as in Theorem~\ref{Thm:MEC}. Then for each
$J\in\partial_k T$ with $k\geq1$, there exists some constant $\alpha
>0$ such that
%
%e4.14 #&#
%e4.15 #&#
%e4.16 #&#
\begin{eqnarray}
\label{Eq:mean M_u^E(J)} %
&&\E\bigl\{M_u^E(J)\bigr\} \nonumber\\
&&\qquad=
\frac{1}{(2\pi)^{k/2}|\La_J|^{1/2}} \int_J \,dt\int^\infty_u
\,dx\int\cdots\int_{E(J)} \,dy_{J_1}\cdots
\,dy_{J_{N-k}}
\nonumber
\\[-8pt]
\\[-8pt]
\nonumber
&&\qquad\quad{} \times\frac{|\La_J-\La_J(t)|}{\gamma^k_t}H_k \biggl(\frac
{x}{\gamma_t} +
\gamma_tC_{J_1}(t)y_{J_1} + \cdots+ \gamma
_tC_{J_{N-k}}(t)y_{J_{N-k}} \biggr)
\\
& &\qquad\quad{}\times p_{X(t), X_{J_1}(t), \ldots, X_{J_{N-k}}(t)}\bigl(x, y_{J_1}, \ldots, y_{J_{N-k}}|
\nabla X_{|J}(t)=0\bigr) \bigl(1+ o\bigl(e^{-\alpha u^2}\bigr)\bigr).\nonumber
\end{eqnarray}
\end{lemma}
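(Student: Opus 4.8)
The plan is to follow the proof of Lemma~\ref{Lem:mean M_u} step by step, while inserting the extra conditioning on the normal first derivatives $(X_{J_1}(t),\ldots,X_{J_{N-k}}(t))$ and the integration over $E(J)$ exactly as in the proof of Theorem~\ref{Thm:MEC}, and then re-examining the error. First I would apply the Kac--Rice metatheorem (legitimate under $({\bf H}1)$ and $({\bf H}3')$) to express $\E\{M_u^E(J)\}$ as an integral over $J$ of a conditional expectation, condition on $X(t)=x$, $X_{J_i}(t)=y_{J_i}$ and $\nabla X_{|J}(t)=0$, use $(-1)^k|\det|=\det$ on $\mathcal{D}_k$, and pass through the positive definite matrix $Q_t$ of \eqref{Eq:decomposition of positivedef matrix}. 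Writing $\bar x:=x+\gamma_t^2\sum_i C_{J_i}(t)y_{J_i}$, the conditional law of $Q_t\nabla^2 X_{|J}(t)Q_t$ is Gaussian with mean $-(\bar x/\gamma_t^2)I_k$ and a covariance not depending on $(x,y)$, so with $V(t):=Q_t\nabla^2 X_{|J}(t)Q_t+(\bar x/\gamma_t^2)I_k$ one splits
\[
\E\Big\{\det\big(Q_t\nabla^2 X_{|J}(t)Q_t\big)\mathbbm{1}_{\{\nabla^2 X_{|J}(t)\in\mathcal{D}_k\}}\,\Big|\,\cdots\Big\}
=\E\Big\{\det\big(V(t)-\tfrac{\bar x}{\gamma_t^2}I_k\big)\Big\}
-\E\Big\{\det\big(V(t)-\tfrac{\bar x}{\gamma_t^2}I_k\big)\mathbbm{1}_{\{\lambda_{\max}(V(t))\ge\bar x/\gamma_t^2\}}\Big\}.
\]
By Corollary~11.6.3 of Adler--Taylor the first term equals $(-1)^k\gamma_t^{-k}H_k(\bar x/\gamma_t)$, and reassembling it with the densities reproduces exactly the integral on the right-hand side of \eqref{Eq:mean M_u^E(J)}, which I will denote ${\rm Main}(J)$. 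It remains to show that the remainder --- the integral of $|\La_J-\La_J(t)|$ times the second term above against $p_{X(t),X_{J_1}(t),\ldots}(x,y\mid\nabla X_{|J}(t)=0)$ over $t\in J$, $x\ge u$, $y\in E(J)$ --- is $o(e^{-\alpha u^2})\,{\rm Main}(J)$ for some $\alpha>0$.

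For the remainder, I would first record that, since the conditional covariance of the entries of $V(t)$ has eigenvalues bounded above and below uniformly in $t\in T$ by Lemma~\ref{Lem:eigenvalue} (and $\gamma_t^2$ is bounded above and below by continuity and $({\bf H}3')$), the entries of $V(t)$ satisfy a uniform Gaussian concentration bound; hence the absolute value of $\E\{\det(V(t)-\tfrac{\bar x}{\gamma_t^2}I_k)\mathbbm{1}_{\{\lambda_{\max}(V(t))\ge\bar x/\gamma_t^2\}}\}$ is at most $c(1+|\bar x|)^M$ for all $\bar x$ and at most $c\,e^{-c'\bar x^2}$ for $\bar x>0$ large. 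Next I would change variables $x\mapsto\bar x$ (at fixed $t,y$), which exhibits $p_{X(t),X_{J_1}(t),\ldots}(x,y\mid\nabla X_{|J}(t)=0)$ as a $t$-dependent constant times $e^{-\bar x^2/(2\gamma_t^2)}$ times $e^{-\frac12 y^{T}A_t y}$ with $A_t$ positive definite, because the conditional variance of $X(t)$ given $\nabla X(t)=0$ is exactly $\gamma_t^2$ and $b_t:=-\gamma_t^2(C_{J_1}(t),\ldots,C_{J_{N-k}}(t))^{T}$ is the corresponding regression vector, so that $\bar x=x-b_t^{T}y$. Both ${\rm Main}(J)$ and the remainder are therefore governed by the quadratic exponent $g_t(y):=\frac{(u-b_t^{T}y)^2}{2\gamma_t^2}+\frac12 y^{T}A_t y$ on $\bar J\times\overline{E(J)}$.

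The crux is a Laplace-type analysis of $g_t$. The key point is that at any minimiser $(\hat t,\hat y)$ of $g$ over $\bar J\times\overline{E(J)}$, first-order optimality on whichever face of $\overline{E(J)}$ contains $\hat y$ forces $u-b_{\hat t}^{T}\hat y=u\gamma_{\hat t}^2/(\gamma_{\hat t}^2+\beta)$ for some $\beta\ge 0$ bounded above uniformly in $t$ (since $b_t$ is bounded and the inverses of the relevant principal submatrices of $A_t$ have bounded norm); hence $\bar x=u-b_{\hat t}^{T}\hat y\ge c_* u>0$ at the concentration point. Consequently the sub-region $\{\bar x\le c_* u/2\}$ contributes to ${\rm Main}(J)$ a quantity super-exponentially smaller than ${\rm Main}(J)$, and on that sub-region the remainder is comparable to the main integrand, so that part of the remainder is also super-exponentially smaller than ${\rm Main}(J)$; while on $\{\bar x>c_* u/2\}$ the bound $e^{-c'\bar x^2}\le e^{-c'c_*^2 u^2/4}$ renders the remainder super-exponentially smaller than ${\rm Main}(J)$. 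Adding the two sub-regions yields $\E\{M_u^E(J)\}={\rm Main}(J)\,(1+o(e^{-\alpha u^2}))$. The main obstacle is precisely this last step: unlike in Lemma~\ref{Lem:mean M_u}, the error integrand is \emph{not} pointwise small compared with the main integrand --- they are comparable in the region $\bar x\le 0$ --- so one genuinely needs the super-exponential concentration of ${\rm Main}(J)$ near $\bar x\gtrsim u$, which rests on the geometry of the quadrant $E(J)$ together with $({\bf H}2)$ and $({\bf H}3')$.
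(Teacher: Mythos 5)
Your proposal is correct and its skeleton is the same as the paper's: apply the Kac--Rice metatheorem exactly as in the proof of Theorem \ref{Thm:MEC} to obtain the integral with the indicator $\mathbbm{1}_{\{\nabla^2 X_{|J}(t)\in\mathcal{D}_k\}}$ inserted, then argue that removing this indicator changes the answer only by a relative error $o(e^{-\alpha u^2})$. Where you genuinely go beyond the paper is in the error analysis. The paper's proof of this step is a single sentence --- ``using similar argument in the proof of Lemma \ref{Lem:mean M_u} to estimate $K(t,x)$'' --- but, as you correctly observe, that argument does not transfer verbatim: in Lemma \ref{Lem:mean M_u} the conditional mean of $Q_t\nabla^2X_{|J}(t)Q_t$ is $-(x/\theta_t^2)I_k$ with $x\ge u$, so the index-$k$ indicator fails only on an event of probability $o(e^{-\alpha x^2})$ \emph{pointwise in $(t,x)$}; here the relevant parameter is $\bar x=x+\gamma_t^2\sum_iC_{J_i}(t)y_{J_i}$, which can be small or negative on part of the unbounded domain $E(J)$ even when $x\ge u$, and there the error integrand is only polynomially bounded rather than small. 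Your fix --- using the first-order optimality conditions for the convex quadratic exponent $g_t(y)$ on the cone $\overline{E(J)}$ to show that the minimiser satisfies $\bar x\ge c_*u$, so that both the main term and the error restricted to $\{\bar x\le c_*u/2\}$ are super-exponentially smaller than ${\rm Main}(J)$, while on $\{\bar x>c_*u/2\}$ the Gaussian tail bound $e^{-c'\bar x^2}$ applies --- is exactly the missing ingredient, and the uniformity inputs you invoke (Lemma \ref{Lem:eigenvalue} for the conditional covariance of $V(t)$, boundedness of $\gamma_t^2$ away from $0$ and $\infty$, boundedness of $b_t$ and of the inverses of the principal submatrices of $A_t$) are all available under $({\bf H}2)$ and $({\bf H}3')$. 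So the proposal is not only correct but supplies a justification that the paper leaves implicit.
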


\begin{pf}
Similar to the proof in Theorem~\ref{Thm:MEC}, we see
that $\E\{M_u^E(J)\}$ is equal to
\begin{eqnarray*}
&&\int_J \frac{(-1)^k|\La_J-\La_J(t)|}{(2\pi)^{k/2}|\La_J|^{1/2}} \,dt\int
^\infty_u \,dx \int\cdots\int_{E(J)}
\,dy_{J_1}\cdots \,dy_{J_{N-k}}\\
&&\quad{}\times \E\bigl\{ \operatorname{det}
\bigl(Q_t\nabla^2 X_{|J}(t)Q_t
\bigr)
\mathbh{1}_{\{Q_t\nabla^2 X_{|J}(t)Q_t \in\mathcal
{D}_k\}} |\\
&&\hspace*{14pt}\qquad X(t)=x, X_{J_1}(t)=y_{J_1},
\ldots, X_{J_{N-k}}(t)=y_{J_{N-k}}, \nabla X_{|J}(t)=0
\bigr\}
\\
&&\quad{} \times p_{X(t), X_{J_1}(t), \ldots, X_{J_{N-k}}(t)}\bigl(x, y_{J_1}, \ldots, y_{J_{N-k}}|
\nabla X_{|J}(t)=0\bigr)
\\
&&\qquad:= \int_J \frac{(-1)^k|\La_J-\La_J(t)|}{(2\pi)^{k/2}|\La_J|^{1/2}} \,dt\int^\infty_u
\,dx K(t,x), %
\end{eqnarray*}
where $Q_t$ is the positive definite matrix in (\ref{Eq:decomposition
of positivedef matrix}). Then, by using a
similar argument as in the proof of Lemma~\ref{Lem:mean M_u} to
estimate $ K(t,x)$, we obtain the desired result.
\end{pf}

We call a function $h(u)$ \emph{super-exponentially small} [when
compared with $\P(\sup_{t\in T} X(t) \geq u )$],
if there exists a constant $\alpha>0$ such that $h(u) =\break  o(e^{-\alpha
u^2 - u^2/{(2\sigma_T^2)}})$ as $u \to\infty$.

The following lemma is Lemma~4 in \citet{Piterbarg96}. It will be
used to
show that the factorial moments of
$M_u (J)$ and $M_u ^E(J)$ are usually super-exponentially small.

%le4.3 #&#
\begin{lemma} \label{Lem:Piterbarg}
Let $\{X(t)\dvtx t\in\R^N\}$ be a centered Gaussian field satisfying
\textup{(H1)} and \textup{(H3)}.
Then for any $\ep>0$, there exists $\varepsilon_1 >0$ such that for
any $J\in\partial_k T$ and $u$ large enough,
\[
\E\bigl\{M_u (J) \bigl(M_u(J)- 1\bigr)\bigr
\} \leq e^{-u^2/(2\beta_J^2 +\varepsilon)} + e^{-u^2/(2\sigma_J^2 -\ep_1)}, %
\]
where $\sigma_J^2=\sup_{t\in J} \operatorname{ Var} (X(t))$ and $\beta_J^2 =
\sup_{t\in J} \sup_{e\in\mathbb{S}^{k-1}}
\operatorname{ Var} (X(t)|\nabla X_{|J}(t),\break  \nabla^2 X_{|J}(t)e)$. Here and in
the sequel, $\mathbb{S}^{k-1}$
is the unit sphere in $\R^{k}$.
\end{lemma}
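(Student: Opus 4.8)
The plan is to bound $\E\{M_u(J)(M_u(J)-1)\}$ via the Kac--Rice formula for factorial moments, which expresses this second factorial moment as an integral over $J\times J$ of the joint density of $(\nabla X_{|J}(t),\nabla X_{|J}(s))$ evaluated at $(0,0)$, multiplied by the conditional expectation of the product $|\det\nabla^2 X_{|J}(t)||\det\nabla^2 X_{|J}(s)|\I_{\{X(t)\ge u,\, X(s)\ge u\}}$ given $\nabla X_{|J}(t)=\nabla X_{|J}(s)=0$. Conditions $({\bf H}1)$ and $({\bf H}3)$ guarantee the requisite nondegeneracy and H\"older regularity so that this formula is valid. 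The main point is then to show that the Gaussian integrand decays like $e^{-u^2/(2\beta_J^2+\ep)}$ on the ``diagonal'' region where $t$ and $s$ are close, and like $e^{-u^2/(2\sigma_J^2-\ep_1)}$ on the region where $t$ and $s$ are bounded away from each other.

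First I would split $J\times J$ into $\{\|t-s\|\ge\delta\}$ and $\{\|t-s\|<\delta\}$ for a small fixed $\delta>0$. On the far-off-diagonal piece, $({\bf H}3)$ ensures the covariance matrix of $(X(t),X(s))$ conditioned on $(\nabla X_{|J}(t),\nabla X_{|J}(s))=(0,0)$ is uniformly nondegenerate over $\{\|t-s\|\ge\delta\}\cap J^2$; hence the conditional density of $(X(t),X(s))$ at $(x,y)$ with $x,y\ge u$ is bounded by a Gaussian that forces $x^2+y^2$ to be large, giving a factor $e^{-u^2/\sigma_J^2}$ up to polynomial corrections from the determinant moments (which are controlled by Gaussian moment bounds, uniformly in $t,s$, using boundedness of the relevant covariance entries). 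This yields the $e^{-u^2/(2\sigma_J^2-\ep_1)}$ term with room to spare. On the near-diagonal piece, the key is a Taylor expansion: writing $\nabla X_{|J}(s)=\nabla X_{|J}(t)+\nabla^2 X_{|J}(t)(s-t)+o(\|s-t\|)$, the event $\{\nabla X_{|J}(t)=\nabla X_{|J}(s)=0\}$ forces $\nabla^2 X_{|J}(t)\,e\approx 0$ for the unit vector $e=(s-t)/\|s-t\|$, so after conditioning we are essentially looking at the density of $X(t)$ given $\nabla X_{|J}(t)=0$ and $\nabla^2 X_{|J}(t)e\approx 0$; the conditional variance of $X(t)$ given these is at most $\beta_J^2+o(1)$ as $\delta\to 0$, producing the $e^{-u^2/(2\beta_J^2+\ep)}$ bound. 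The H\"older condition $({\bf H}1)$ controls the error terms in the Taylor expansion uniformly.

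The hard part, and the place requiring the most care, will be making the near-diagonal estimate rigorous: one must handle the degeneracy of the joint density of $(\nabla X_{|J}(t),\nabla X_{|J}(s))$ as $s\to t$ (the two gradients become perfectly correlated), extract the direction $e$ cleanly, and show the conditional variance bound holds uniformly. The standard device is to change variables from $(t,s)$ to $(t,e,r)$ with $r=\|s-t\|$, integrate out $r$ using the Gaussian tail in the $\nabla^2 X_{|J}(t)e$ direction, and then take a supremum over $e\in\mathbb{S}^{k-1}$ and $t\in J$; continuity and compactness give the uniform constant. Since this is precisely Lemma~4 of Piterbarg (1996b), I would either cite that result directly or reproduce its proof with the obvious adaptations to the stationary-increments setting, noting that the only properties used are Gaussianity, $({\bf H}1)$ and $({\bf H}3)$, all of which we have assumed.
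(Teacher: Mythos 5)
Your proposal matches the paper's treatment: the paper gives no proof of this lemma, simply quoting it as Lemma~4 of Piterbarg (1996b), which is exactly what you propose to do, and your sketch (Kac--Rice for the second factorial moment, splitting $J\times J$ into near- and far-diagonal regions, with the Taylor expansion forcing $\nabla^2 X_{|J}(t)e\approx 0$ near the diagonal so that the conditional variance drops to $\beta_J^2+o(1)$) is a faithful outline of Piterbarg's argument. No discrepancy to report.
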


%co4.4 #&#
\begin{corollary}\label{Cor:Piterbarg}
Let $X = \{X(t), t\in\R^{N} \}$ be a centered Gaussian random field
with stationary increments satisfying
\textup{(H1)}, \textup{(H2)} and \textup{(H3)}. Then for all
$J\in\partial
_k T$, $\E\{M_u (J)(M_u (J) -1)\}$
and $\E\{M_u^E (J)(M_u^E (J) -1) \}$ are super-exponentially small.
\end{corollary}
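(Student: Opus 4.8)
\textbf{Proof plan for Corollary \ref{Cor:Piterbarg}.}
The plan is to deduce the corollary directly from Lemma \ref{Lem:Piterbarg} (Piterbarg's Lemma 4) by showing that for Gaussian fields with stationary increments the two exponents appearing there are strictly better than $-u^2/(2\sigma_T^2)$ by a fixed margin. First I would observe that conditions $({\bf H}1)$, $({\bf H}2)$ and $({\bf H}3)$ are exactly the hypotheses needed: $({\bf H}1)$ supplies the mean-square H\"older regularity of the second derivatives, $({\bf H}3)$ gives the joint non-degeneracy of $(X, \nabla X, \nabla^2 X)$ at distinct points, so Lemma \ref{Lem:Piterbarg} applies verbatim to every face $J\in\partial_k T$, yielding
\begin{equation*}
\E\{M_u(J)(M_u(J)-1)\}\le e^{-u^2/(2\beta_J^2+\eps)}+e^{-u^2/(2\sigma_J^2-\eps_1)}.
\end{equation*}
The point of the corollary is that both terms on the right are $o(e^{-\alpha u^2-u^2/(2\sigma_T^2)})$ for some $\alpha>0$.

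For the second term this is immediate: $\sigma_J^2\le\sigma_T^2$ by definition, so $1/(2\sigma_J^2-\eps_1)\ge 1/(2\sigma_T^2-\eps_1)>1/(2\sigma_T^2)$, and choosing $\eps_1$ small we gain a fixed positive margin over $1/(2\sigma_T^2)$; set $2\alpha_1 = 1/(2\sigma_T^2-\eps_1)-1/(2\sigma_T^2)>0$. The main work is the first term: one must show $\beta_J^2<\sigma_T^2$ strictly, with a gap uniform over all faces $J$ (there are finitely many), so that picking $\eps$ small enough in Lemma \ref{Lem:Piterbarg} still leaves $1/(2\beta_J^2+\eps)>1/(2\sigma_T^2)+2\alpha_2$ for some $\alpha_2>0$. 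Here $\beta_J^2=\sup_{t\in J}\sup_{e\in\mathbb S^{k-1}}\mathrm{Var}(X(t)\mid\nabla X_{|J}(t),\nabla^2 X_{|J}(t)e)$. Since $\nu(t)=\mathrm{Var}(X(t))$ and $\beta_J^2\le\theta_{J,t}^2\le\nu(t)\le\sigma_T^2$, it suffices to rule out equality. Conditioning can only decrease variance, so $\beta_J^2=\sigma_T^2$ would force, at the maximizing $t$ and direction $e$, that $X(t)$ be (almost surely) independent of $\nabla X_{|J}(t)$ and of $\nabla^2 X_{|J}(t)e$; but for a stationary-increment field $\nabla^2 X(t)$ is correlated with $X(t)$ precisely through $\La(t)-\La=\E\{X(t)\nabla^2 X(t)\}$ by (\ref{ladiff}), and under $({\bf H}2)$ (equivalently, by Lemma \ref{Lem:posdef}, positive-definiteness of $\La-\La(t)$) this matrix is non-degenerate, so $\La_J(t)-\La_J$ is non-zero for every $t\in J$; hence there is a direction $e$ with $(\La_J(t)-\La_J)e\neq 0$, i.e. $\E\{X(t)\,\nabla^2 X_{|J}(t)e\}\neq 0$, contradicting independence. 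This gives $\beta_J^2<\sigma_T^2$ pointwise, and compactness of $J$ together with continuity of the relevant conditional variances (which follows from $({\bf H}1)$ and the non-degeneracy in $({\bf H}3')$) upgrades this to a uniform gap; taking the minimum over the finitely many faces gives a single $\alpha_2>0$.

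Finally I would set $\alpha=\min(\alpha_1,\alpha_2)>0$ and conclude that each term, hence $\E\{M_u(J)(M_u(J)-1)\}$, is $o(e^{-\alpha u^2-u^2/(2\sigma_T^2)})$, i.e.\ super-exponentially small, for every $J\in\partial_k T$ and every $k$. For $\E\{M_u^E(J)(M_u^E(J)-1)\}$ nothing new is needed: since $M_u^E(J)\le M_u(J)$ pointwise, we have $M_u^E(J)(M_u^E(J)-1)\le M_u(J)(M_u(J)-1)$, so the same bound applies; the vertex case $k=0$ is trivial because then $M_u^E(\{t\})$ and $M_u(\{t\})$ are $0$-$1$ valued and the factorial moment vanishes identically. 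The one genuine obstacle is the strict inequality $\beta_J^2<\sigma_T^2$ with a uniform gap — the argument above via (\ref{ladiff}) and Lemma \ref{Lem:posdef} is the stationary-increments replacement for the constant-variance fact that $X(t)\perp\nabla X(t)$, and one must be a little careful that the direction $e$ realizing the non-degeneracy can be chosen measurably/uniformly, which is where compactness of $T$ and the continuity afforded by $({\bf H}1)$ enter.
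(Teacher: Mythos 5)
Your proposal is correct and follows essentially the same route as the paper: reduce to $M_u(J)$ via $M_u^E(J)\le M_u(J)$, dispose of the $k=0$ case trivially, invoke Lemma \ref{Lem:Piterbarg}, and derive $\beta_J^2<\sigma_T^2$ from (\ref{ladiff}) together with the positive definiteness of $\La_J-\La_J(t)$ under $({\bf H}2)$, upgraded to a uniform gap by continuity and compactness. The only point worth tightening is your phrase ``there is a direction $e$ with $(\La_J(t)-\La_J)e\neq 0$'': since the supremum defining $\beta_J^2$ is over all $e$, you need this for the \emph{particular} maximizing direction, which the paper gets for free by observing that definiteness of $\La_J-\La_J(t)$ (not mere non-vanishing) forces $(\La_J(t)-\La_J)e\neq 0$ for \emph{every} $e\in\mathbb{S}^{k-1}$.
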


\begin{pf} Since $M_u^E (J) \leq M_u (J)$, we only need to show
that $\E\{M_u (J)\times (M_u (J) -1) \}$
is super-exponentially small. If $k=0$, then $M_u (J)$ is either 0 or 1
and hence $\E\{M_u (J)(M_u (J) -1) \}=0$. If $k\geq1$, then, thanks
to Lemma~\ref{Lem:Piterbarg}, it suffices to show that $\beta_J^2$ is
strictly less than $\sigma_T^2$.

Clearly, $\operatorname{Var} (X(t)|\nabla X_{|J}(t), \nabla^2 X_{|J}(t)e)
\leq
\sigma_T^2$ for every $e \in\mathbb{S}^{k-1}$ and $t \in T$.
On the other hand,
%
%e4.17 #&#
\begin{equation}
\label{Eq:contra} 
\operatorname{Var} \bigl(X(t)|\nabla X_{|J}(t),
\nabla^2 X_{|J}(t)e\bigr) = \sigma_T^2
\quad\Longrightarrow\quad\E\bigl\{X(t) \bigl(\nabla^2 X_{|J}(t)e
\bigr)\bigr\}=0.\hspace*{-30pt} %
\end{equation}
Note that, by (\ref{ladiff}), the right-hand side of (\ref{Eq:contra})
is equivalent to $(\La_J(t) - \La_J)e=0$. However,
by (H2), $\La_J(t) - \La_J$ is negative definite, which implies
$(\La_J(t) - \La_J)e\neq0$ for all $e \in\mathbb{S}^{k-1}$. Thus
\[
\operatorname{Var} \bigl(X(t)|\nabla X_{|J}(t), \nabla^2
X_{|J}(t)e\bigr) < \sigma_T^2
\]
for all $e\in\mathbb{S}^{k-1}$ and $t \in T$. This and the continuity
of $\operatorname{Var} (X(t)|\nabla X_{|J}(t),\break  \nabla^2 X_{|J}(t)e)$
in $(e, t)$ imply $\beta_J^2 < \sigma_T^2$.
\end{pf}

The following lemma shows that the cross terms in (\ref
{Ineq:lowerbound}) and (\ref{Ineq:lowerbound and upperbound})
are super-exponentially small if the two faces are not adjacent. For
the case when the faces are adjacent, the proof is more technical.
See the proofs in Theorems \ref{Thm:MEC approximation 1} and \ref
{Thm:MEC approximation 2}.
%
%le4.5 #&#
\begin{lemma}\label{Lem:disjoint faces}
Let $X = \{X(t), t\in\R^{N} \}$ be a centered Gaussian random field
with stationary increments satisfying
\textup{(H1)} and \textup{(H3)}. Let $J$ and $J'$ be two faces of $T$
such that their distance is positive,
that is, $\inf_{t\in J, s\in J'}\|s-t\|>\delta_0$ for some
$\delta_0>0$. Then $\E\{M_u (J) M_u (J') \}$ is super-exponentially small.
\end{lemma}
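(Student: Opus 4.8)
The plan is to reduce the claim to a bound on the expected number of pairs of points, one in each face, at which the relevant gradients vanish and the field exceeds $u$, and then to exploit that the two faces are separated in space together with hypothesis $({\bf H}3)$ to obtain a conditional-variance bound that is strictly below $\sigma_T^2$. First I would apply the Kac--Rice metatheorem for the second factorial-type moment across two faces [cf. Theorem 11.5.1 in Adler and Taylor (2007)] to write
\begin{equation*}
\E\{M_u(J)M_u(J')\} = \int_J\!\int_{J'} \E\big\{|\det\nabla^2 X_{|J}(t)|\,|\det\nabla^2 X_{|J'}(s)|\,\mathbbm{1}_{\{X(t)\geq u,\,X(s)\geq u\}}\,\big|\,\nabla X_{|J}(t)=0,\,\nabla X_{|J'}(s)=0\big\}\, p(t,s)\, dt\, ds,
\end{equation*}
where $p(t,s)$ is the joint density of $(\nabla X_{|J}(t),\nabla X_{|J'}(s))$ at $0$. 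Condition $({\bf H}3)$ guarantees this density is finite and the conditioning is well defined for every $t\neq s$, and since $\|t-s\|>\delta_0$ on $J\times J'$ all quantities below are uniformly controlled.

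Next I would bound the determinant factors. Using $({\bf H}1)$ and Lemma \ref{Lem:independent}, the entries of $\nabla^2 X_{|J}(t)$ have bounded variances uniformly in $t\in T$, and the same holds after conditioning on the gradients (the conditional covariances are bounded by the unconditional ones); hence there is a constant $c$ with $\E\{|\det\nabla^2 X_{|J}(t)|\,|\det\nabla^2 X_{|J'}(s)|\mid \cdots\}\leq c$ uniformly. The remaining task is to estimate $\P\{X(t)\geq u,\,X(s)\geq u\mid \nabla X_{|J}(t)=0,\nabla X_{|J'}(s)=0\}$. Because the pair $(X(t),X(s))$ is jointly Gaussian conditionally on the gradients, this probability is at most $\exp\big(-u^2/(2\beta^2)\big)$ up to polynomial factors, where
\[
\beta^2 \;=\; \sup_{t\in J,\,s\in J'}\;\sup_{\substack{a,b\geq 0\\ a+b=1}}\;\mathrm{Var}\big(a X(t)+b X(s)\,\big|\,\nabla X_{|J}(t),\nabla X_{|J'}(s)\big)
\]
— this is exactly the kind of estimate behind Lemma \ref{Lem:Piterbarg}, applied to the Gaussian field restricted to $J\cup J'$, so I would quote that lemma (or its proof) rather than redo it.

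The heart of the argument is then to show $\beta^2 < \sigma_T^2$. Suppose for contradiction that the supremum equals $\sigma_T^2$; by continuity and compactness it is attained at some $t_0\in J$, $s_0\in J'$ with $t_0\neq s_0$ and some $a,b\geq 0$, $a+b=1$. Attaining $\sigma_T^2$ forces $\mathrm{Var}(X(t_0))=\mathrm{Var}(X(s_0))=\sigma_T^2$ and, as in \eqref{Eq:contra}, forces the conditioning variables to be uncorrelated with $aX(t_0)+bX(s_0)$; in particular $\E\{X(t_0)\nabla^2$-type relations give $(\La_J(t_0)-\La_J)$-style orthogonality, but more directly $aX(t_0)+bX(s_0)$ must be independent of $\nabla X_{|J}(t_0)$ and $\nabla X_{|J'}(s_0)$, and its variance being maximal forces it to be a.s. proportional to neither a degenerate combination — this contradicts $({\bf H}3)$ applied to the point $t_0$ (or $s_0$) if $a$ or $b$ is $0$, and when both are positive one checks that $(X(t_0),X(s_0),\nabla X_{|J}(t_0),\nabla X_{|J'}(s_0))$ would be degenerate in a way excluded by $({\bf H}3)$. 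This is the step I expect to be the main obstacle: making the degeneracy contradiction airtight when $0<a<1$, since one must rule out an exact linear relation among the two field values and their tangential gradients, and this requires care about which coordinates lie in $\sigma(J)$ versus $\sigma(J')$. Once $\beta^2<\sigma_T^2$ is established, combining the uniform determinant bound, the Gaussian tail estimate with exponent $-u^2/(2\beta^2)$, the boundedness of $p(t,s)$ and of $|J|,|J'|$ yields $\E\{M_u(J)M_u(J')\}\leq \exp(-u^2/(2\beta^2))$ up to polynomial factors, which is $o\big(\exp(-\alpha u^2 - u^2/(2\sigma_T^2))\big)$ for $\alpha = \tfrac12(\beta^{-2}-\sigma_T^{-2})>0$ — i.e. super-exponentially small, as claimed.
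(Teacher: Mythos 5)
Your overall skeleton (Kac--Rice for the cross moment, polynomial control of the determinants, then a Gaussian tail whose exponent beats $u^2/(2\sigma_T^2)$ thanks to the separation of $J$ and $J'$) is the same as the paper's, but the step you yourself flag as the main obstacle is a genuine gap, and the way you have set it up cannot be closed by a degeneracy argument. You define
\[
\beta^2=\sup_{t\in J,\,s\in J'}\ \sup_{a,b\ge 0,\,a+b=1}{\rm Var}\big(aX(t)+bX(s)\,\big|\,\nabla X_{|J}(t),\nabla X_{|J'}(s)\big)
\]
and claim $\beta^2<\sigma_T^2$ follows from $({\bf H}3)$. Taking the endpoint $a=1$, $b=0$ already defeats this: at a point $t_0$ with $\nu(t_0)=\sigma_T^2$ and $\nabla\nu(t_0)=0$ (an interior maximum of the variance is not excluded here --- the lemma assumes only $({\bf H}1)$ and $({\bf H}3)$, and condition (\ref{Condition:boundary max point}) is not in force), $X(t_0)$ is uncorrelated with $\nabla X_{|J}(t_0)$, and nothing prevents it from also being uncorrelated with $\nabla X_{|J'}(s)$ for some $s$, in which case ${\rm Var}(X(t_0)\mid\cdots)=\sigma_T^2$. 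More importantly, orthogonality of $X(t_0)$ to the conditioning variables is \emph{not} a degeneracy of the joint Gaussian vector, so $({\bf H}3)$ yields no contradiction; your argument conflates ``uncorrelated with'' and ``linearly dependent on''. The correct mechanism --- and the paper's --- is the specific choice $a=b=\tfrac12$ combined with the spatial separation: by $({\bf H}3)$ and compactness, $\rho(\delta_0):=\sup_{\|s-t\|>\delta_0}|\E\{X(t)X(s)\}|/\sqrt{\nu(t)\nu(s)}<1$, hence ${\rm Var}\big(\tfrac12(X(t)+X(s))\big)\le\tfrac12(1+\rho(\delta_0))\,\sigma_T^2<\sigma_T^2$ uniformly, and $\{X(t)\ge u,\,X(s)\ge u\}\subset\{X(t)+X(s)\ge 2u\}$ finishes the tail estimate; no conditioning on the gradients is needed for this step. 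This is exactly (\ref{Eq:disjoint faces 4}).

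A secondary, fixable issue: you bound $\E\{|\det\nabla^2X_{|J}(t)||\det\nabla^2X_{|J'}(s)|\mid\nabla=0\}$ by a constant and then multiply by $\P\{X(t)\ge u,X(s)\ge u\mid\nabla=0\}$, which silently factors the conditional expectation of a product of dependent quantities. The paper avoids this by conditioning additionally on $X(t)=x$, $X(s)=y$, where the determinant term is bounded by $C_1x^{N_1}y^{N_1}$ (the conditional mean of $\nabla^2X$ is linear in the field values, so this term genuinely grows in $x,y$ and is not uniformly bounded), and the polynomial is then absorbed into the Gaussian tail as in (\ref{Eq:disjoint faces 4}); alternatively H\"older's inequality would rescue your factorization at the cost of an $\ep$ in the exponent. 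You also omit the cases where one or both faces are singletons, which the paper treats separately (they are easier).
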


\begin{pf} We first consider the case when $\operatorname{ dim}(J) =k \geq
1$ and
$\operatorname{ dim}(J') =k' \geq1$. By the Kac--Rice metatheorem for higher
moments [the proof is the same as that of Theorem~11.5.1 in \citet{AT07}],
%
%e4.18 #&#
%e4.19 #&#
%e4.20 #&#
%e4.21 #&#
%e4.22 #&#
%e4.23 #&#
\begin{eqnarray}
\label{Eq:disjoint faces} %
&&\E\bigl\{M_u (J) M_u
\bigl(J'\bigr) \bigr\} \nonumber\\
&&\qquad= \int_{J} \,dt \int
_{J'} \,ds \E\bigl\{ \bigl|\operatorname{ det} \nabla^2
X_{|J}(t)\bigr | \bigl|\operatorname{ det} \nabla^2 X_{|J'}(s) \bigr|\nonumber\\
&&\hspace*{76pt}\qquad{}\times
\mathbh {1}_{\{X(t)\geq u, X(s)\geq u\}}\nonumber
\\
& &\hspace*{64pt}\qquad\quad{}\times\mathbh{1}_{\{\nabla^2 X_{|J}(t)
\in\mathcal{D}_k,
\nabla^2 X_{|J'}(s) \in\mathcal{D}_{k'}\}} |\nonumber\\
&&\hspace*{100pt} X(t)=x, X(s)=y, \nabla X_{|J}(t)=0,\nabla X_{|J'}(s)=0\bigr\}\nonumber
\\
&&\hspace*{86pt}{}\times p_{X(t), X(s),
\nabla X_{|J}(t), \nabla
X_{|J'} (s)}(x,y,0,0)
\\
&&\qquad\leq\int_{J} \,dt \int_{J'} \,ds \int
_u^\infty \,dx \int_u^\infty\nonumber
\,dy \\
&&\qquad\quad{}\E \bigl\{ \bigl|\operatorname{ det} \nabla^2 X_{|J}(t) \bigr|
\bigl|\operatorname{ det}
\nabla^2 X_{|J'}(s)\bigr ||
\nonumber\\
&&\hspace*{11pt}\qquad\quad {}\times X(t)=x, X(s)=y, \nabla X_{|J}(t)=0, \nabla X_{|J'}(s)=0
\bigr\} p_{X(t), X(s)}(x,y)
\nonumber\\
&&\qquad\quad \times p_{\nabla X_{|J}(t),
 \nabla X_{|J'} (s)}\bigl(0,0|X(t)=x, X(s)=y\bigr).\nonumber %
\end{eqnarray}
Note that the following two inequalities hold: For constants $a_i$ and $b_j$,
\[
\prod_{i=1}^k |a_i| \prod
_{j=1}^{k'} |b_j|\leq
\frac{1}{k+k'} \Biggl(\sum_{i=1}^k
|a_i|^{k+k'} + \sum_{j=1}^{k'}
|b_j|^{k+k'} \Biggr);
\]
and for any Gaussian variable $\xi$ and positive integer $l$,
\[
\E|\xi|^l \leq\E\bigl(|\E\xi|+|\xi-\E\xi|\bigr)^l
\leq2^l \bigl(|\E\xi|^l + \E|\xi -\E\xi|^l
\bigr) = 2^l \bigl(|\E\xi|^l + K_l\bigl(\operatorname{
Var}(\xi)\bigr)^{l/2}\bigr),
\]
where the constant $K_l$ depends only on $l$. It follows from these two
inequalities that there
exist some positive constants $C_1$ and $N_1$ such that for large $x$
and $y$,
%
%e4.24 #&#
%e4.25 #&#
\begin{eqnarray}
\label{Eq:disjoint faces 2} %
&&\sup_{t\in J, s\in J'}\E\bigl\{ \bigl|\operatorname{ det}
\nabla^2 X_{|J}(t)\bigr | \bigl|\operatorname{ det} \nabla^2
X_{|J'}(s) \bigr| | X(t)=x, X(s)=y,
\nabla X_{|J}(t)=0,\nonumber\\
&& \hspace*{273pt}\nabla X_{|J'}(s)=0\bigr\}\\
&&\qquad \leq
C_1x^{N_1}y^{N_1}.\nonumber %
\end{eqnarray}
Also, there exists a positive constant $C_2$ such that
%
%e4.26 #&#
%e4.27 #&#
\begin{eqnarray}
\label{Eq:disjoint faces 3} %
&&\sup_{t\in J, s\in J'} p_{\nabla X_{|J}(t), \nabla X_{|J'}
(s)}
\bigl(0,0|X(t)=x, X(s)=y\bigr)\nonumber
\\
&&\qquad \leq\sup_{t\in J, s\in J'}(2\pi)^{-(k+k')/2}
\nonumber
\\[-8pt]
\\[-8pt]
\nonumber
&&\qquad\quad{}\times\bigl[\operatorname{ det Cov}
\bigl(\nabla X_{|J}(t), \nabla X_{|J'} (s) | X(t)=x, X(s)=y
\bigr)\bigr]^{-1/2} \\
&&\qquad\leq C_2.\nonumber %
\end{eqnarray}
Let $\rho(\delta_0)= \sup_{\|s-t\|>\delta_0} \frac{|\E\{X(t)X(s)\}
|}{\sqrt{\nu(t)\nu(s)}}$ which is
strictly less than 1 \,due to (H3), then $\forall\ep>0$, there
exists a positive constant
$C_3$ such that for all $t\in J$, $s\in J'$ and $u$ large enough,
%
%e4.28 #&#
%e4.29 #&#
\begin{eqnarray}
\label{Eq:disjoint faces 4} %
&&\int_u^\infty\int
_u^\infty x^{N_1}y^{N_1}p_{X(t), X(s)}(x,y)
\,dx\,dy \nonumber\\
&&\qquad= \E\bigl\{ \bigl[X(t)X(s)\bigr]^{N_1} \mathbh{1}_{\{X(t)\geq u, X(s)\geq u\}}
\bigr\}
\nonumber
\\[-8pt]
\\[-8pt]
\nonumber
&&\qquad \leq\E\bigl\{ \bigl[X(t)+X(s)\bigr]^{2N_1} \mathbh{1}_{\{X(t)+ X(s)\geq2u\}
}
\bigr\} \\
&&\qquad\leq C_3 \exp \biggl(\ep u^2-\frac{u^2}{(1+\rho(\delta_0))\sigma
_T^2}
\biggr).\nonumber %
\end{eqnarray}
Combining (\ref{Eq:disjoint faces}) with (\ref{Eq:disjoint faces 2}),
(\ref{Eq:disjoint faces 3}) and
(\ref{Eq:disjoint faces 4}) yields that $\E\{M_u (J) M_u (J') \}$ is
super-exponentially small.

When only one of the faces, say $J$, is a singleton, then let $J=\{t_0\}
$ and we have
%
%e4.30 #&#
%e4.31 #&#
\begin{eqnarray}
\label{Eq:single point} %
&&\E\bigl\{M_u (J) M_u
\bigl(J'\bigr) \bigr\} \nonumber\\
&&\qquad\leq\int_{J'} \,ds \int
_u^\infty \,dx \int_u^\infty
\,dy p_{X(t_0), X(s), \nabla X_{|J'} (s)}(x,y,0)
\\
&&\qquad\quad{} \times\E\bigl\{\bigl |\operatorname{ det} \nabla^2 X_{|J'}(s) \bigr| |
X(t_0)=x, X(s)=y, \nabla X_{|J'}(s)=0 \bigr\}.\nonumber %
\end{eqnarray}
Following the previous discussion yields that $\E\{M_u (J) M_u (J')\}$
is super-exponentially small.

Finally, if both $J$ and $J'$ are singletons, then $\E\{M_u (J) M_u
(J') \}$ becomes the joint
probability of two Gaussian variables exceeding level $u$, and hence is trivial.
\end{pf}

%s4.3 #&#
\subsection{Main results and their proofs}\label{sec4.3}

Now we are ready to prove our main results on approximating the
excursion probability
$\P \{\sup_{t\in T} X(t) \geq u  \}$. Theorem~\ref{Thm:MEC
approximation 1} contains
a mild technical condition (\ref{Condition:boundary max point}) which
specifies the way that
the variogram $\nu(t)$ attains its maximum on the boundary of $T$. In
particular, it implies
that, at each point on $\partial T$ where $\nu(t)$ achieves $\sigma
_T^2:=\sup_{t\in T}\nu(t)$,
$\nabla\nu(t)$ is not zero. In the case
of $N=1$ and $T=[a, b]$, if $\nu(t)$ attains its maximum $\sigma_T^2$
at the end point $a$ or $b$,
then (\ref{Condition:boundary max point}) requires that $\nu(t)$ is
strictly monotone in a
neighborhood of that end point. Notice that, if $\nu(t)$ only attains
its maximum in $\partial_N T$,
the interior of $T$, then (\ref{Condition:boundary max point}) is
satisfied automatically.
In this sense, (\ref{Condition:boundary max point}) is more general
than the corresponding
condition in Theorem~5 of Aza\"is and Wschebor (\citeyear{AzaisW08}).

%th4.6 #&#
\begin{theorem}\label{Thm:MEC approximation 1}
Let $X = \{X(t)\dvtx t\in\R^{N} \}$ be a centered Gaussian random field
with stationary increments
such that \textup{(H1)}, \textup{(H2)} and \textup{(H3)} are fulfilled.
Suppose that for any face $J$,
%
%e4.32 #&#
\begin{equation}
\label{Condition:boundary max point} \bigl\{t\in J\dvtx\nu(t) = \sigma_T^2,
\nu_j(t) = 0 \mbox{ for some } j\notin \sigma(J) \bigr\} = \varnothing.
\end{equation}
Then there exists some constant $\alpha>0$ such that
%
%e4.33 #&#
%e4.34 #&#
%e4.35 #&#
\begin{eqnarray}
\label{Eq:excursion probability for special case} %
&&\P \Bigl\{\sup_{t\in T} X(t) \geq u
\Bigr\}\nonumber\\
&&\qquad= \sum_{k=0}^N\sum
_{J\in
\partial_k T} \E\bigl\{M_u (J) \bigr\} + o
\bigl(e^{-\alpha u^2 - u^2/{(2\sigma_T^2)} }\bigr)
\nonumber
\\[-8pt]
\\[-8pt]
\nonumber
&&\qquad= \sum_{\{t\}\in\partial_0 T} \Psi \biggl(\frac{u}{\sqrt{\nu(t)}}
\biggr) + \sum_{k=1}^N\sum
_{J\in\partial_k T}\frac{1}{(2\pi)^{(k+1)/2}|\La
_J|^{1/2}}
\\
&&\qquad\quad{} \times\int_J \frac{|\La_J-\La_J(t)|}{\theta^k_t} H_{k-1}
\biggl(\frac{u}{\theta_t} \biggr) e^{-u^2/{(2\theta^2_t)}} \,dt + o\bigl(e^{-\alpha u^2 - u^2/{(2\sigma_T^2) }}
\bigr). \nonumber%
\end{eqnarray}
\end{theorem}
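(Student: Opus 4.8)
The plan is to use the chain of inequalities \eqref{Ineq:lowerbound and upperbound}, which sandwiches $\P\{\sup_{t\in T}X(t)\geq u\}$ between $\sum_{k,J}\E\{M_u(J)\}$ and $\sum_{k,J}\big(\E\{M_u(J)\}-\tfrac12\E\{M_u(J)(M_u(J)-1)\}\big)-\sum_{J\neq J'}\E\{M_u(J)M_u(J')\}$. By Corollary \ref{Cor:Piterbarg}, each factorial-moment term $\E\{M_u(J)(M_u(J)-1)\}$ is super-exponentially small, so the entire middle column of error contributions reduces to the cross terms $\sum_{J\neq J'}\E\{M_u(J)M_u(J')\}$. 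Thus the first displayed equality in \eqref{Eq:excursion probability for special case} will follow once I show every cross term is super-exponentially small. I would first invoke Lemma \ref{Lem:disjoint faces} to dispose of all pairs $J,J'$ whose closures are disjoint; the remaining case is a pair of \emph{adjacent} faces, which is where condition \eqref{Condition:boundary max point} enters and which I expect to be the main obstacle.

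For adjacent faces I would argue as follows. If $J,J'$ are adjacent, then $\overline J\cap\overline{J'}$ is a common subface, and the Kac–Rice representation of $\E\{M_u(J)M_u(J')\}$ (as in the proof of Lemma \ref{Lem:disjoint faces}) localizes the main mass near the diagonal $t=s$ in $\overline J\cap\overline{J'}$, because away from the diagonal the pair $(X(t),X(s))$ is strictly sub-maximally correlated and the bound \eqref{Eq:disjoint faces 4} applies. Near the diagonal, on the common subface, a point $t$ contributing to both $M_u(J)$ and $M_u(J')$ forces $\nabla X_{|J}(t)=0$ \emph{and} $\nabla X_{|J'}(s)=0$ with $s\to t$; since $\sigma(J)\cup\sigma(J')$ strictly contains $\sigma(\overline J\cap\overline{J'})$, this forces extra coordinates of $\nabla X$ to vanish, so the conditioning event constrains more Gaussian variables than for a single face. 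The payoff is that the exponent governing $\E\{M_u(J)M_u(J')\}$ near the diagonal is $u^2/(2\theta_t^2)$ where $\theta_t^2$ is a conditional variance strictly smaller than $\sigma_T^2$ at the relevant points—this strict inequality is exactly what \eqref{Condition:boundary max point} guarantees, since it says the maximal variance $\sigma_T^2$ is never attained at a point of a face $J$ where some outward directional derivative $\nu_j$ vanishes (such vanishing being forced at an interior critical point of the restricted field). Quantifying this via a local Taylor expansion of $\nu$ and of the conditional variances, plus the uniform bounds on the relevant covariance matrices from Lemma \ref{Lem:eigenvalue} in the Appendix, yields a bound of the form $o(e^{-\alpha u^2 - u^2/(2\sigma_T^2)})$.

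Granting the first equality, the second equality in \eqref{Eq:excursion probability for special case} is then essentially bookkeeping. For $k\geq 1$ and $J\in\partial_k T$, Lemma \ref{Lem:mean M_u} gives $\E\{M_u(J)\}$ up to a multiplicative $(1+o(e^{-\alpha u^2}))$ factor equal to the stated integral; since that integral is bounded below by a constant times $e^{-u^2/(2\sigma_T^2)}$ up to polynomial factors (indeed $\theta_t^2\le\sigma_T^2$ with equality attainable), the multiplicative error becomes an additive $o(e^{-\alpha u^2-u^2/(2\sigma_T^2)})$ term, which I absorb. For $k=0$ and $J=\{t\}\in\partial_0 T$, $M_u(\{t\})=\mathbbm{1}_{\{X(t)\ge u\}}$, so $\E\{M_u(\{t\})\}=\Psi(u/\sqrt{\nu(t)})$ exactly, giving the vertex sum. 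Summing over all faces and collecting the error terms (finitely many, each $o(e^{-\alpha u^2-u^2/(2\sigma_T^2)})$ for a common $\alpha>0$ obtained by taking the minimum) produces the claimed expansion.

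The main obstacle, as indicated, is the adjacent-faces estimate: one must carefully set up the joint Kac–Rice density, isolate the diagonal contribution on the common subface, and extract from \eqref{Condition:boundary max point} the strict gap between the governing conditional variance and $\sigma_T^2$; the non-diagonal part is comparatively routine, handled by the same moment and correlation bounds already developed for Lemma \ref{Lem:disjoint faces}.
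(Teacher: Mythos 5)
Your overall route coincides with the paper's: you sandwich the excursion probability via \eqref{Ineq:lowerbound and upperbound}, dispose of the factorial moments by Corollary \ref{Cor:Piterbarg}, dispose of non-adjacent cross terms by Lemma \ref{Lem:disjoint faces}, and for adjacent faces you extract from \eqref{Condition:boundary max point} a uniform strict gap between $\sigma_T^2$ and the variance of $X(t)$ conditioned on the extra gradient coordinates forced to vanish near $I=\bar J\cap\bar J'$ (this works because $\nu_j(t)=2\E\{X(t)X_j(t)\}\neq 0$ at maximal-variance points of $I$ for every $j\in(\sigma(J)\cup\sigma(J'))\setminus\sigma(I)$, which is exactly how the paper uses the hypothesis in \eqref{Eq:super-exponentially small} and the Case 1 singleton argument). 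The reduction of the second equality in \eqref{Eq:excursion probability for special case} to Lemma \ref{Lem:mean M_u} and the exact evaluation at vertices are also as in the paper.

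The one step that would fail as you have written it is the control of the joint Kac--Rice density near the common subface. When $m=\dim(\bar J\cap\bar J')\geq 1$, the Gaussian vector $(\nabla X_{|J}(t),\nabla X_{|J'}(s))$ degenerates as $s\to t$, since $X_i(s)\to X_i(t)$ for each shared index $i\leq m$; consequently $p_{\nabla X_{|J}(t),\nabla X_{|J'}(s)}(0,0)$ blows up along the diagonal, and your appeal to ``uniform bounds on the relevant covariance matrices from Lemma \ref{Lem:eigenvalue}'' does not apply to this matrix. The exponential gain from the conditional-variance gap is a gain in $u$, not in $(t,s)$, so it cannot by itself make the $dt\,ds$ integral finite. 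The paper supplies the missing quantitative ingredient: using the Taylor expansion \eqref{Formula:Taylor}, $X_i(s)=X_i(t)+\l\nabla X_i(t),s-t\r+\|s-t\|^{1+\eta}Y^i_{t,s}$, together with the nondegeneracy of $\big(X_1(t),\dots,X_m(t),\l\nabla X_1(t),e_{t,s}\r,\dots,\l\nabla X_m(t),e_{t,s}\r\big)$, it proves ${\rm det Cov}(\nabla X_{|J}(t),\nabla X_{|J'}(s))\geq C\|s-t\|^{2m}$ as in \eqref{Eq:estimate covdet 1}--\eqref{Eq:neighboring faces special 3}, so the density is $O(\|s-t\|^{-m})$ and $\|s-t\|^{-m}$ is integrable on $J\times J'$. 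Without this determinant lower bound (or an equivalent), your near-diagonal term is not under control; with it, the rest of your outline goes through exactly as in the paper.
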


%re3 #&#
\begin{remark}
It should be mentioned that, for Gaussian random fields with constant
variance, Taylor, Takemura and Adler (\citeyear{TTA05}) provided
an explicit form for the constant $\alpha$ in (\ref{Eq:MEC approx
error}). However, in Theorems \ref{Thm:MEC approximation 1} and
\ref{Thm:MEC approximation 2}, we are not able to give explicit
information on the value(s) of $\alpha$.
\end{remark}

\begin{pf*}{Proof of Theorem~\ref{Thm:MEC approximation 1}} Since the
second equality in (\ref{Eq:excursion probability for special case})
follows from Lemma~\ref{Lem:mean M_u} directly, we only need to prove
the first one. By
(\ref{Ineq:lowerbound and upperbound}) and Corollary~\ref
{Cor:Piterbarg}, it suffices to
show that the last term in (\ref{Ineq:lowerbound and upperbound}) is
super-exponentially
small. Thanks to Lemma~\ref{Lem:disjoint faces},\vspace*{1pt} we only need to
consider the case when
the distance of $J$ and $J'$ is~0, that is, $I:= \bar{J}\cap\bar{J'}
\neq\varnothing$. Without
loss of generality, we assume
%
%e4.36 #&#
\begin{eqnarray}
\label{Eq:assumption for faces} %
\sigma(J)&=& \{1, \ldots, m, m+1, \ldots, k\},
\nonumber
\\[-8pt]
\\[-8pt]
\nonumber \sigma
\bigl(J'\bigr)&=& \bigl\{1, \ldots , m, k+1, \ldots,
k+k'-m\bigr\}, %
\end{eqnarray}
where $0 \leq m \leq k \leq k' \leq N$ and $k'\geq1$. Recall that, if
$k=0$, then $\sigma(J)
= \varnothing$. Under assumption (\ref{Eq:assumption for faces}), we have
$J\in\partial_k T$, $J'\in\partial_{k'} T$
and $\operatorname{dim} (I) =m$.

\textit{Case} 1: $k=0$, that is, $J$ is a singleton, say $J=\{t_0\}$. If $\nu
(t_0) < \sigma_T^2$, then
by (\ref{Eq:single point}), it is trivial to show that $\E\{M_u (J)
M_u (J')\}$ is
super-exponentially small. Now we consider the case $\nu(t_0) = \sigma
_T^2$. Due to
(\ref{Condition:boundary max point}), $\E\{X(t_0) X_1(t_0)\}\neq0$,
and hence by
continuity, there exists $\delta>0$ such that $\E\{X(s) X_1(s)\}\neq
0$ for all
$\|s-t_0\|\leq\delta$. It follows from (\ref{Eq:single point}) that
$\E\{M_u (J) M_u (J') \}$
is bounded from above by
\begin{eqnarray*}
&&\int_{s\in J'\dvtx\|s-t_0\|> \delta} \,ds \int_u^\infty
\,dx \int_u^\infty \,dy\\
&&\quad{}\times \E\bigl\{ \bigl|\operatorname{det}
\nabla^2 X_{|J'}(s) \bigr| | X(t_0)=x, X(s)=y,
\nabla X_{|J'}(s)=0 \bigr\}
\\
&&\quad{} \times p_{X(t_0),
X(s), \nabla X_{|J'} (s)}(x,y,0)
\\
&&\quad{}+ \int_{s\in J'\dvtx\|s-t_0\|\leq\delta} \,ds \int_u^\infty
\,dy \\
&&\qquad{}\times\E\bigl\{ \bigl|\operatorname{det} \nabla^2 X_{|J'}(s)\bigr | | X(s)=y,
\nabla X_{|J'}(s)=0 \bigr\} p_{X(s), \nabla X_{|J'} (s)}(y,0)
\\
&&\quad\qquad:= I_1+I_2. %
\end{eqnarray*}
Following the proof of Lemma~\ref{Lem:disjoint faces}, we can show that
$I_1$ is super-exponentially small.
Note that there exists a constant $\ep_0>0$ such that
\[
\sup_{s\in J'\dvtx\|s-t_0\|\leq\delta} \operatorname{ Var} \bigl(X(s)| \nabla X_{|J'}(s)
\bigr) \leq\sup_{s\in J'\dvtx\|s-t_0\|\leq\delta} \operatorname{ Var} \bigl(X(s)| X_1(s)
\bigr) \leq\sigma_T^2 - \ep_0.
\]
This implies that $I_2$, and hence $\E\{M_u (J) M_u (J') \}$ are
super-exponentially small.

\textit{Case} 2: $k\geq1$. For all $t\in I$ with $\nu(t) = \sigma_T^2$, by
assumption (\ref{Condition:boundary max point}),\break 
$\E\{X(t) X_i(t)\}\neq0$, $\forall i = m+1,\ldots, k+k'-m$. Since
$I$ is a compact set, we see that there exist constants
$\ep_1, \delta_1>0$ such that
%
%e4.37 #&#
\begin{eqnarray}
\label{Eq:super-exponentially small}
&&\sup_{t\in B, s\in B'} \operatorname{ Var} \bigl( X(t) |
X_{m+1}(t), \ldots, X_{k} (t), X_{k+1} (s), \ldots,
X_{k+k'-m} (s)\bigr)
\nonumber
\\[-8pt]
\\[-8pt]
\nonumber
&&\qquad\leq\sigma_T^2 -
\ep_1,
\end{eqnarray}
where $B = \{t\in J\dvtx\operatorname{dist}(t, I) \leq\delta_1\}$ and $B' =
\{
s\in J'\dvtx\operatorname{dist}(s, I) \leq\delta_1\}$.
It follows from (\ref{Eq:disjoint faces}) that $\E\{M_u (J) M_u (J')
\}
$ is bounded by
\begin{eqnarray*}
&&\int\!\!\int_{(J\times J') \setminus(B\times B')} \,dt\,ds \int_u^\infty
\,dx \int_u^\infty \,dy p_{X(t), X(s), \nabla X_{|J} (t), \nabla X_{|J'}
(s)}(x,y,0,0)
\\
&&\qquad{} \times\E\bigl\{ \bigl|\operatorname{det} \nabla^2 X_{|J}(t) \bigr|\bigl|
\operatorname{det} \nabla ^2 X_{|J'}(s) \bigr| | X(t)=x, X(s)=y, \nabla
X_{|J}(t)=0, \\
&&\hspace*{273pt}\nabla X_{|J'}(s)=0 \bigr\}
\\
&&\qquad{}+ \int\!\!\int_{ B\times B'} \,dt\,ds \int_u^\infty
\,dx p_{X(t)}\bigl(x|\nabla X_{|J} (t)=0, \nabla
X_{|J'} (s)=0\bigr)\\
&&\hspace*{157pt}{}\times p_{\nabla X_{|J} (t), \nabla X_{|J'}
(s)}(0,0)
\\
&&\qquad{} \times\E\bigl\{ \bigl|\operatorname{det} \nabla^2 X_{|J}(t) \bigr|\bigl|
\operatorname{det} \nabla^2 X_{|J'}(s) \bigr| | X(t)=x, \nabla
X_{|J}(t)=0, \nabla X_{|J'}(s)=0 \bigr\}
\\
&&\quad\qquad:=I_3+I_4. %
\end{eqnarray*}
Note that
%
%e4.38 #&#
\begin{eqnarray}
\label{Eq:product} &&\bigl(J\times J'\bigr) \setminus\bigl(B\times
B'\bigr)
\nonumber
\\[-8pt]
\\[-8pt]
\nonumber
&&\qquad = \bigl( (J\setminus B) \times B' \bigr)
\cup \bigl( B\times\bigl(J'\setminus B'\bigr) \bigr)
\cup \bigl( (J\setminus B)\times\bigl(J'\setminus B'
\bigr) \bigr).
\end{eqnarray}
Since each product set on the right-hand side of (\ref{Eq:product})
consists of two sets with positive distance,
following the proof of Lemma~\ref{Lem:disjoint faces} we can verify
that $I_3$ is super-exponentially small.

For $I_4$, taking into account (\ref{Eq:super-exponentially small}),
one has
%
%e4.39 #&#
\begin{equation}
\label{Eq:neighboring faces special} %
\sup_{t\in B, s\in B'} \operatorname{Var} \bigl(X(t)|
\nabla X_{|J}(t), \nabla X_{|J'}(s) \bigr) \leq
\sigma_T^2- \ep_1. %
\end{equation}
For any $t\in B, s\in B'$ with $s \ne t$, in order to estimate
%
%e4.40 #&#
\begin{eqnarray}
\label{Eq:neighboring faces special 2}&& p_{\nabla X_{|J} (t),
 \nabla X_{|J'} (s)}(0,0)
 \nonumber
 \\[-8pt]
 \\[-8pt]
 \nonumber
 &&\qquad= (2\pi)^{-(k+k')/2} \bigl(\operatorname{ det Cov}
\bigl(\nabla X_{|J} (t), \nabla X_{|J'} (s)\bigr)
\bigr)^{-1/2},
\end{eqnarray}
we write the determinant on the right-hand side of (\ref{Eq:neighboring
faces special 2}) as
%
%e4.41 #&#
%e4.42 #&#
\begin{eqnarray}
\label{Eq:estimate covdet 1} %
&&\operatorname{ det Cov} \bigl(X_{m+1} (t), \ldots,
X_k (t), X_{k+1} (s), \ldots, X_{k+k'-1} (s)|\nonumber\\
&&\hspace*{82pt}X_{1} (t), \ldots, X_m (t), X_{1} (s),
\ldots, X_{m} (s)\bigr)
\\
&&\qquad{} \times\operatorname{ det Cov} \bigl(X_{1} (t), \ldots, X_m (t),
X_{1} (s), \ldots, X_{m} (s)\bigr) ,\nonumber %
\end{eqnarray}
where the first determinant in (\ref{Eq:estimate covdet 1}) is bounded
away from zero due to (H3). By~(H1),
as shown in \citet{Piterbarg96}, applying Taylor's formula, we
can write
%
%e4.43 #&#
\begin{equation}
\label{Formula:Taylor} \nabla X(s) = \nabla X(t) + \nabla^2 X(t)
(s-t)^T + \|s-t\|^{1+\eta} Y_{t,s},
\end{equation}
where $Y_{t,s}= (Y^1_{t,s}, \ldots, Y^N_{t,s})^T$ is a Gaussian vector
field with bounded variance uniformly for all
$t\in J$, $s\in J'$. Hence as $\|s-t\| \to0$, the second determinant
in~(\ref{Eq:estimate covdet 1}) becomes
%
%e4.44 #&#
%e4.45 #&#
%e4.46 #&#
%e4.47 #&#
%e4.48 #&#
\begin{eqnarray}
\label{Eq:estimate covdet 2} %
&& \operatorname{ det Cov} \bigl(X_{1} (t), \ldots,
X_m (t), X_{1} (t)+ \bigl\langle\nabla X_1(t), s-t
\bigr\rangle + \|s-t\|^{1+\eta}Y^1_{t,s} , \ldots,\nonumber
\\
&&\hspace*{137pt} X_m (t)+\bigl\langle\nabla X_m(t), s-t \bigr\rangle + \|s-t
\|^{1+\eta
}Y^m_{t,s}\bigr)\nonumber
\\
&&\qquad= \operatorname{ det Cov} \bigl(X_{1} (t), \ldots, X_m (t),\bigl \langle
\nabla X_1(t), s-t \bigr\rangle + \|s-t\|^{1+\eta}Y^1_{t,s}
, \ldots,
\nonumber
\\[-8pt]
\\[-8pt]
\nonumber
&&\hspace*{170pt} \bigl\langle\nabla X_m(t), s-t\bigr \rangle + \|s-t\|^{1+\eta
}Y^m_{t,s}
\bigr)
\\
&&\qquad= \|s-t\|^{2m}\operatorname{ det Cov} \bigl(X_{1} (t), \ldots,
X_m (t),\bigl \langle\nabla X_1(t), e_{t,s} \bigr\rangle ,
\ldots,\bigl \langle\nabla X_m(t), e_{t,s}\bigr\rangle \bigr)\nonumber\\
&&\qquad\quad{}\times\bigl(1+o(1)
\bigr),\nonumber %
\end{eqnarray}
where $e_{t,s}=(s-t)^T/\|s-t\|$ and due to (H3), the last
determinant in (\ref{Eq:estimate covdet 2}) is
bounded away from zero uniformly for all $t\in J$ and $s\in J'$. It
then follows from (\ref{Eq:estimate covdet 1})
and (\ref{Eq:estimate covdet 2}) that
%
%e4.49 #&#
\begin{equation}
\label{Eq:neighboring faces special 3} %
\operatorname{ det Cov} \bigl(\nabla X_{|J} (t),
\nabla X_{|J'} (s)\bigr) \geq C_1 \|s-t\|^{2m}
\end{equation}
for some constant $C_1 >0$. Similar to (\ref{Eq:disjoint faces 2}),
there exist constants $C_2, N_1>0$ such that
%
%e4.50 #&#
%e4.51 #&#
\begin{eqnarray}
\label{Eq:neighboring faces special 4} %
&&\sup_{t\in J, s\in J'}\E \bigl\{ \bigl|\operatorname{det}
\nabla^2 X_{|J}(t) \bigr|\bigl|\operatorname {det} \nabla^2
X_{|J'}(s) \bigr| |\nonumber\\
&&\hspace*{48pt} X(t)=x, \nabla X_{|J}(t)=0, \nabla
X_{|J'}(s)=0 \bigr\}
\\
&&\qquad \leq C_2\bigl(1+ x^{N_1}\bigr). \nonumber%
\end{eqnarray}
Combining (\ref{Eq:neighboring faces special}) with (\ref
{Eq:neighboring faces special 2}),
(\ref{Eq:neighboring faces special 3}) and (\ref{Eq:neighboring faces
special 4}), and noting
that $m< k'$ implies $1/\|s-t\|^m$ is integrable on $J\times J'$, we
conclude that $I_4$, and
hence $\E\{M_u (J) M_u (J') \}$ are finite and super-exponentially small.
\end{pf*}

%th4.7 #&#
\begin{theorem} \label{Thm:MEC approximation 2}
Let $X = \{X(t)\dvtx t\in\R^{N} \}$ be a centered Gaussian random field
with stationary
increments and $X(0) = 0$. Assume that \textup{(H1)}, \textup{(H2)} and
\textup{(H3)} are fulfilled.
Then there exists a constant $\alpha>0$ such that
%
%e4.52 #&#
%e4.53 #&#
\begin{eqnarray}
\label{Eq:MEC approximation 2} %
\P \Bigl\{\sup_{t\in T} X(t) \geq u
\Bigr\} &=& \sum_{k=0}^N\sum
_{J\in
\partial_k T} \E\bigl\{M_u^E (J) \bigr\} + o
\bigl(e^{-\alpha u^2 - u^2/{(2\sigma_T^2)} }\bigr)
\nonumber
\\[-8pt]
\\[-8pt]
\nonumber
&=& \E\bigl\{\varphi(A_u)\bigr\} +o\bigl(e^{-\alpha u^2 - u^2/{(2\sigma_T^2)} }\bigr),
\end{eqnarray}
where $\E\{\varphi(A_u)\}$ is the mean Euler characteristic of $A_u$
formulated in Theorem~\ref{Thm:MEC}.
\end{theorem}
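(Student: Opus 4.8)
The plan is to run the argument off the two-sided bound \eqref{Ineq:upperbound}--\eqref{Ineq:lowerbound}, which already expresses $\P\{\sup_{t\in T}X(t)\ge u\}$ in terms of $\sum_{k}\sum_{J\in\partial_k T}\E\{M_u^E(J)\}$ up to error terms $\sum_J \frac12\E\{M_u^E(J)(M_u^E(J)-1)\}$ and the cross terms $\sum_{J\ne J'}\E\{M_u^E(J)M_u^E(J')\}$. The second equality in \eqref{Eq:MEC approximation 2} is then essentially bookkeeping: by Lemma \ref{Lem:mean M_u^E} each $\E\{M_u^E(J)\}$ for $k\ge1$ equals the corresponding integral in \eqref{Eq:MEC} times $(1+o(e^{-\alpha u^2}))$, and for $k=0$ one has $\E\{M_u^E(\{t\})\}=\P(X(t)\ge u,\nabla X(t)\in E(\{t\}))$ exactly; since the leading term is of order $e^{-u^2/(2\sigma_T^2)}$ times a polynomial, the multiplicative $o(e^{-\alpha u^2})$ errors and the gap between $\sum_J\E\{M_u^E(J)\}$ and $\E\{\varphi(A_u)\}$ (which is exactly this same sum by Theorem \ref{Thm:MEC}) are absorbed into $o(e^{-\alpha u^2-u^2/(2\sigma_T^2)})$. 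So the real work is showing the two families of error terms are super-exponentially small.

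First I would dispose of the factorial-moment terms: by Corollary \ref{Cor:Piterbarg}, $\E\{M_u^E(J)(M_u^E(J)-1)\}\le\E\{M_u(J)(M_u(J)-1)\}$ is super-exponentially small for every face $J$, using ({\bf H}1), ({\bf H}2), ({\bf H}3) and the key inequality $\beta_J^2<\sigma_T^2$ proved there via ({\bf H}2). Next, the cross terms $\E\{M_u^E(J)M_u^E(J')\}\le\E\{M_u(J)M_u(J')\}$: when $\bar J\cap\bar J'=\emptyset$ this is handled directly by Lemma \ref{Lem:disjoint faces}. The remaining, and main, obstacle is the case of \emph{adjacent} faces, $I:=\bar J\cap\bar J'\neq\emptyset$ — and here, crucially, we do \emph{not} have the technical hypothesis \eqref{Condition:boundary max point} available as we did in Theorem \ref{Thm:MEC approximation 1}, so the argument of Case 1 and Case 2 of that proof (which exploited $\E\{X(t)X_i(t)\}\ne0$ near the max-variance set) must be replaced.

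The key point that rescues the adjacent case without \eqref{Condition:boundary max point} is that we are now working with $M_u^E$, the \emph{extended outward} maxima, rather than $M_u$. On adjacent faces $J,J'$ sharing the sub-face $I$, the outward-direction constraints $\varepsilon_j^* X_j(t)\ge0$ ($j\notin\sigma(J)$) and $\varepsilon_{j'}^* X_{j'}(s)\ge0$ ($j'\notin\sigma(J')$) are \emph{incompatible} in the limit $t,s\to I$: a point of $J$ that is an extended outward maximum and a nearby point of $J'$ that is an extended outward maximum force, on the shared coordinate directions, the relevant first-order partials to have opposite required signs, while continuity of $\nabla X$ forces them to nearly agree; this yields an extra event of small probability. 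Concretely, I would split $J\times J'$ into a ``far'' part (distance bounded below, handled by Lemma \ref{Lem:disjoint faces}'s method) and a ``near'' part $B\times B'$ as in the proof of Theorem \ref{Thm:MEC approximation 1}, bound the conditional expectation of the product of $|\det\nabla^2 X_{|J}|\,|\det\nabla^2 X_{|J'}|$ by $C(1+x^{N_1}+y^{N_1})$ as in \eqref{Eq:disjoint faces 2}, bound the joint density of $(\nabla X_{|J}(t),\nabla X_{|J'}(s))$ at $0$ using the Taylor expansion \eqref{Formula:Taylor} exactly as in \eqref{Eq:estimate covdet 1}--\eqref{Eq:neighboring faces special 3} to get a factor $\asymp\|s-t\|^{-m}$ (integrable since $m<k'$), and finally replace the use of \eqref{Eq:super-exponentially small} by the observation that the outward constraints restrict the integration in $x$ (and $y$) to a region where the conditional density of $X(t)$ given $\nabla X_{|J}(t)=0=\nabla X_{|J'}(s)$ \emph{and} the sign constraints is super-exponentially small relative to $e^{-u^2/(2\sigma_T^2)}$ — uniformly on $B\times B'$ — because the sign constraints are asymptotically contradictory near $I$. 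I expect the careful verification of this last uniform bound (quantifying ``asymptotically contradictory'' into an explicit $\sigma_T^2-\varepsilon_1$ improvement of the conditional variance, or an explicit exponentially small bound on the constrained Gaussian integral) to be the technical heart of the proof; everything else recycles Lemmas \ref{Lem:mean M_u^E}, \ref{Lem:Piterbarg}, \ref{Lem:disjoint faces} and Corollary \ref{Cor:Piterbarg}.
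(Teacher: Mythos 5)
Your overall framework coincides with the paper's: the two-sided bounds \eqref{Ineq:upperbound}--\eqref{Ineq:lowerbound}, Corollary \ref{Cor:Piterbarg} for the factorial moments, Lemma \ref{Lem:disjoint faces} for faces at positive distance, and Lemma \ref{Lem:mean M_u^E} together with Theorem \ref{Thm:MEC} for the second equality in \eqref{Eq:MEC approximation 2}; all of that is fine. The gap is in the only genuinely new step, the cross term $\E\{M_u^E(J)M_u^E(J')\}$ for adjacent faces. The mechanism you propose --- that the outward constraints of $J$ and of $J'$ become mutually contradictory as $t,s\to I$, with continuity of $\nabla X$ forcing nearly equal but oppositely signed partials --- is not correct as stated. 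With the standard orientation (all $\ep_j=1$) the constraint from $J$ is $X_j(t)\ge 0$ for $j\notin\sigma(J)$ and the one from $J'$ is $X_j(s)\ge 0$ for $j\notin\sigma(J')$: on the coordinates shared by both faces ($1,\dots,m$) there are no sign constraints at all, only the critical-point equalities; on coordinates normal to exactly one face the two events bear on different coordinates; and on coordinates normal to both faces the required signs agree. For instance with $N=2$, $J=(a_1,b_1)\times\{b_2\}$, $J'=\{b_1\}\times(a_2,b_2)$, the constraints are $X_2(t)\ge0$ and $X_1(s)\ge0$, which in the limit $t,s\to(b_1,b_2)$ are perfectly compatible with each other and with $X_1(t)=0=X_2(s)$. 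So there is no single uniform ``asymptotic contradiction'' to verify.

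What actually makes the adjacent case work is an interplay between the level event $\{X(t)\ge u\}$ and \emph{one} constraint at a time, organized by a covering of $J\times J'$. Writing $e_{t,s}=(s-t)^T/\|s-t\|$ and $\alpha_i(t,s)=\l e_i,(\La-\La(t))e_{t,s}\r$, Lemma \ref{Lem:unifrom posdef} gives $\l e_{t,s},(\La-\La(t))e_{t,s}\r=\sum_i\alpha_i(t,s)\l e_i,e_{t,s}\r\ge\alpha_0>0$ uniformly, so every $(t,s)$ lies in one of the sets $D_0$ (mass carried by the shared coordinates $1,\dots,m$) or $D_i$ (mass carried by a coordinate $i$ tangent to one face and normal to the other). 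On $D_0$ one conditions on the difference quotients $\l\nabla X_i(t),e_{t,s}\r\approx(X_i(s)-X_i(t))/\|s-t\|=0$ for $i\le m$, and the identity $\E\{X(t)\l\nabla X_i(t),e_{t,s}\r\}=-\alpha_i(t,s)$ yields ${\rm Var}(X(t)\mid\nabla X_{|J}(t),\nabla X_{|J'}(s))\le\sigma_T^2-\ep_0$; the sign constraints play no role on this region, so the uniform bound you defer to would fail precisely there. On $D_i$ the same identity shows that, given the critical-point equalities, $X(t)$ and the sign-constrained derivative ($X_i(s)$ or $X_i(t)$, rescaled by $\|s-t\|$) have correlation $\le-\delta_0<0$, so that $\{X(t)\ge u\}\cap\{X_i(\cdot)\ge0\}$ is super-exponentially small by Lemma \ref{Lem:rho}; this negative correlation, not continuity of $\nabla X$, is how the outward constraints are exploited. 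Without this decomposition and the two distinct mechanisms, the ``technical heart'' you leave open is not merely unverified --- the single statement you propose to verify is false on part of $J\times J'$.
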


The main idea for the proof of Theorem~\ref{Thm:MEC approximation 2}
comes from Aza\"is and Delmas
(\citeyear{AD02}) (especially their Theorem~4). Before showing the proof, we list
the following two lemmas whose
proofs are given in the \hyperref[app]{Appendix}.

%le4.8 #&#
\begin{lemma} \label{Lem:unifrom posdef} Under \textup{(H2)}, there
exists a constant $\alpha_0 >0$ such that
\[
\bigl\langle e, \bigl(\La-\La(t)\bigr)e\bigr\rangle \geq\alpha_0 \qquad\forall t\in T,
e\in \mathbb{S}^{N-1}.
\]
\end{lemma}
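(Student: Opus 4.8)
The plan is a routine compactness argument, with the two nontrivial-looking points being the strict positivity at each fixed $t$ and the continuity in $t$.

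First I would observe that, since $0 \notin T$ by standing assumption, every $t \in T$ satisfies $t \neq 0$, so Lemma \ref{Lem:posdef} combined with $({\bf H}2)$ shows that $\La - \La(t)$ is positive definite for each fixed $t \in T$. Equivalently, writing $\mu(t) := \min_{e \in \mathbb{S}^{N-1}} \l e, (\La - \La(t))e\r$ for the smallest eigenvalue of the symmetric matrix $\La - \La(t)$, we have $\mu(t) > 0$ for every $t \in T$.

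Next I would check that $t \mapsto \La(t)$ is continuous on $T$. Recall $\la_{ij}(t) = \int_{\R^N} \la_i \la_j \cos\l t, \la\r\, F(d\la)$. Under $({\bf H}1)$ the field $X$ lies in $C^2$, so in particular $\la_{ij} = \int_{\R^N} \la_i \la_j\, F(d\la) < \infty$ by \eqref{Eq:laij}; since $|\la_i \la_j \cos\l t, \la\r| \le \tfrac12(\la_i^2 + \la_j^2)$ is an $F$-integrable dominating function that does not depend on $t$, the dominated convergence theorem yields continuity of each entry $\la_{ij}(t)$, hence of the matrix-valued map $t \mapsto \La - \La(t)$. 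Because the eigenvalues of a symmetric matrix depend continuously on its entries, $\mu(\cdot)$ is continuous on $T$. Since $T$ is compact, $\mu$ attains its minimum $\alpha_0 := \min_{t\in T} \mu(t)$ at some $t_0 \in T$, and $\alpha_0 = \mu(t_0) > 0$ by the previous paragraph. Then for every $t \in T$ and every $e \in \mathbb{S}^{N-1}$ we get $\l e, (\La - \La(t))e\r \ge \mu(t)\,\|e\|^2 = \mu(t) \ge \alpha_0$, which is the assertion.

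There is no genuine obstacle here; the only step needing a word of care is the continuity of $\La(\cdot)$, for which the finiteness of the second spectral moments furnished by $({\bf H}1)$ is exactly what is used. Alternatively, one can bypass the continuity of eigenvalues altogether: the function $(t, e) \mapsto \l e, (\La - \La(t))e\r$ is continuous on the compact set $T \times \mathbb{S}^{N-1}$ and is strictly positive there by Lemma \ref{Lem:posdef}, hence it attains a strictly positive minimum $\alpha_0$, and the claim follows immediately.
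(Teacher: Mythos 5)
Your argument is correct and is essentially the paper's own proof: the paper likewise observes that $(e,\La-\La(t))\mapsto\l e,(\La-\La(t))e\r$ is continuous and strictly positive on the compact set $\{(e,\La-\La(t)):t\in T,\,e\in\mathbb{S}^{N-1}\}$ and hence bounded below by a positive constant, which is exactly your closing "alternative" route (and your eigenvalue version is the same argument in different clothing). Your explicit verification that $t\mapsto\La(t)$ is continuous via dominated convergence is a detail the paper leaves implicit, and it is a welcome addition.
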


%le4.9 #&#
\begin{lemma} \label{Lem:rho}
Let $\{\xi_1(t)\dvtx t\in T_1\}$ and $\{\xi_2(t)\dvtx t\in T_2\}$ be
two centered
Gaussian random fields. Let
\begin{eqnarray*}
\sigma_i^2(t)& =& \operatorname{ Var}\bigl(
\xi_i(t)\bigr),\qquad \overline{\sigma}_i=\sup
_{t\in T_i} \sigma_i(t),\qquad \underline{
\sigma}_i=\inf_{t\in T_i} \sigma_i(t),
\\
\rho(t,s) &=& \frac{ \E\{\xi_1(t)\xi_2(s)\}}{\sigma_1(t)\sigma
_2(s)},\qquad \overline{\rho} =\sup_{t\in T_1, s\in T_2}
\rho(t,s),\qquad \underline{\rho}=\inf_{t\in
T_1, s\in T_2} \rho(t,s), %
\end{eqnarray*}
and assume $0<\underline{\sigma}_i\leq\overline{\sigma}_i<\infty$ for
$i=1,2$. If $0<\underline{\rho}
\leq\overline{\rho}<1$, then for any $N_1, N_2>0$, there exists a
constant $\alpha>0$ such that as $u \to\infty$,
\[
\sup_{t\in T_1, s\in T_2} \E\bigl\{\bigl(1+\bigl|\xi_1(t)\bigr|^{N_1}
+ \bigl|\xi_2(s)\bigr|^{N_2}\bigr) \mathbh{1}_{\{\xi_1(t) \geq u, \xi_2(s)<0\}} \bigr\}
= o\bigl(e^{-\alpha u^2 - u^2/(2\overline{\sigma}_1^2) }\bigr).
\]
Similarly, if $-1<\underline{\rho}\leq\overline{\rho}<0$, then
\[
\sup_{t\in T_1, s\in T_2} \E\bigl\{\bigl(1+\bigl|\xi_1(t)\bigr|^{N_1}
+ \bigl|\xi_2(s)\bigr|^{N_2}\bigr) \mathbh{1}_{\{\xi_1(t) \geq u, \xi_2(s)>0\}} \bigr\}
= o\bigl(e^{-\alpha u^2 - u^2/(2\overline{\sigma}_1^2) }\bigr).
\]
\end{lemma}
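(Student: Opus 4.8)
\textbf{Proof proposal for Lemma \ref{Lem:rho}.} The plan is to condition on the value of $\xi_1(t)$ and exploit the fact that, since $\xi_1$ and $\xi_2$ are positively correlated with correlation bounded away from both $0$ and $1$, the event that $\xi_1(t)$ is large while $\xi_2(s)$ has the ``wrong'' sign is a large--deviation event for the conditional law of $\xi_2(s)$; this is exactly what produces the extra factor $e^{-\alpha u^2}$ beyond the Gaussian tail $e^{-u^2/(2\overline\sigma_1^2)}$ of $\xi_1(t)$. The argument requires no regularity of the fields, only the stated scalar bounds, so it reduces to a uniform one-dimensional Gaussian estimate.

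Fix $(t,s)\in T_1\times T_2$ and use the Gaussian regression
\[
\xi_2(s)=a(t,s)\,\xi_1(t)+\zeta_{t,s},\qquad a(t,s)=\rho(t,s)\,\frac{\sigma_2(s)}{\sigma_1(t)},
\]
where $\zeta_{t,s}$ is centered Gaussian, independent of $\xi_1(t)$, with variance $\sigma_2^2(s)(1-\rho^2(t,s))$. The hypotheses $0<\underline\sigma_i\le\overline\sigma_i<\infty$ and $0<\underline\rho\le\overline\rho<1$ give, uniformly in $(t,s)$, the bounds $a(t,s)\ge a_0:=\underline\rho\,\underline\sigma_2/\overline\sigma_1>0$, $a(t,s)\le\overline\rho\,\overline\sigma_2/\underline\sigma_1$, and ${\rm Var}(\zeta_{t,s})\le\tau^2:=\overline\sigma_2^2(1-\underline\rho^2)$. (If ${\rm Var}(\zeta_{t,s})=0$ then $\xi_2(s)=a(t,s)\xi_1(t)\ge 0$ on $\{\xi_1(t)\ge u\}$, so the indicator vanishes and that degenerate case is trivial.)

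On $\{\xi_1(t)=x\}$ with $x\ge u>0$ one has $\{\xi_2(s)<0\}\subseteq\{\zeta_{t,s}<-a_0 x\}$; bounding $|\xi_2(s)|^{N_2}\le 2^{N_2}(|a(t,s)x|^{N_2}+|\zeta_{t,s}|^{N_2})$ and using the uniform upper bound on $a(t,s)$, the inner conditional expectation is at most
\[
C\,(1+x^{N_1}+x^{N_2})\,\P(\zeta_{t,s}<-a_0 x)+C\,\E\{|\zeta_{t,s}|^{N_2}\mathbbm{1}_{\{\zeta_{t,s}<-a_0 x\}}\}.
\]
The elementary Gaussian tail bound $\P(\zeta_{t,s}<-a_0 x)\le e^{-a_0^2 x^2/(2{\rm Var}\zeta_{t,s})}\le e^{-a_0^2 x^2/(2\tau^2)}$, together with a Cauchy--Schwarz estimate for the moment term, shows that both quantities are $\le P(x)\,e^{-a_1 x^2}$ for a fixed polynomial $P$ and fixed $a_1>0$, uniformly in $(t,s)$. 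Integrating against $p_{\xi_1(t)}(x)\le(\sqrt{2\pi}\,\underline\sigma_1)^{-1}e^{-x^2/(2\overline\sigma_1^2)}$ gives
\[
\E\{(1+|\xi_1(t)|^{N_1}+|\xi_2(s)|^{N_2})\mathbbm{1}_{\{\xi_1(t)\ge u,\,\xi_2(s)<0\}}\}\le C'\int_u^\infty P(x)\,e^{-a_1 x^2}\,e^{-x^2/(2\overline\sigma_1^2)}\,dx,
\]
which is $o(e^{-\alpha u^2-u^2/(2\overline\sigma_1^2)})$ for any $0<\alpha<a_1$, uniformly over $T_1\times T_2$. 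This is the first assertion; the second follows by applying it to the pair $(\xi_1,-\xi_2)$, whose correlation is $-\rho(t,s)\in[-\overline\rho,-\underline\rho]\subset(0,1)$, together with $\{\xi_2(s)>0\}=\{-\xi_2(s)<0\}$.

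I do not expect any real obstacle. The only point requiring care is the \emph{uniformity} in $(t,s)\in T_1\times T_2$: the regression coefficient $a(t,s)$ must be bounded below by a positive constant (this is where $\underline\rho>0$ and $\underline\sigma_2>0$, $\overline\sigma_1<\infty$ enter) and above, and the conditional variance ${\rm Var}(\zeta_{t,s})$ must be bounded above (using $\overline\rho<1$ and $\overline\sigma_2<\infty$). All of these are immediate from the standing hypotheses, and in particular one needs neither compactness or continuity of the index sets/fields nor any \emph{lower} bound on the conditional variances.
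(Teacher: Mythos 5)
Your proof is correct and follows essentially the same route as the paper: the paper works directly with the bivariate Gaussian density and completes the square so that, given $\xi_1(t)=x_1\ge u$, the conditional density of $\xi_2(s)$ is centered at $\rho(t,s)\sigma_2(s)x_1/\sigma_1(t)\ge \underline{\rho}\,\underline{\sigma}_2 x_1/\overline{\sigma}_1>0$, whence restricting to $x_2<0$ costs an extra factor $e^{-a_1x_1^2}$ — exactly your regression step phrased in terms of densities. The uniformity observations and the reduction of the second case to the first via $\xi_2\mapsto-\xi_2$ also match the paper.
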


\begin{pf*}{Proof of Theorem~\ref{Thm:MEC approximation 2}} Note that
the second equality in (\ref{Eq:MEC approximation 2}) follows from
Theorem~\ref{Thm:MEC} and Lemma~\ref{Lem:mean M_u^E},
and similar to the proof in Theorem~\ref{Thm:MEC approximation 1}, we
only need to
show that $\E\{M_u^E (J) M_u^E (J') \}$ is super-exponentially small
when $J$ and $J'$
are neighboring. Let $I:= \bar{J}\cap\bar{J'} \neq\varnothing$. We
follow the assumptions in
(\ref{Eq:assumption for faces}) and assume also that all elements in
$\ep(J)$ and $\ep(J')$ are 1,
which implies $E(J)=\R^{N-k}_+$ and $E(J')=\R^{N-k'}_+$.

We first consider the case $k\geq1$. By the Kac--Rice metatheorem, $\E
\{M_u^E (J)\times  M_u^E (J') \}$ is bounded from above by
%
%e4.54 #&#
%e4.55 #&#
%e4.56 #&#
%e4.57 #&#
%e4.58 #&#
\begin{eqnarray}
\label{Eq:cross term} %
&&\int_{J} \,dt\int
_{J'} \,ds \int_u^\infty \,dx \int
_u^\infty \,dy \int_0^\infty
\,dz_{k+1} \cdots\nonumber\\
&&\quad{}\times\int_0^\infty
\,dz_{k+k'-m} \int_0^\infty
\,dw_{m+1} \cdots\int_0^\infty
\,dw_{k}\nonumber
\\
&&\quad{} \times\E\bigl\{ \bigl|\operatorname{det} \nabla^2 X_{|J}(t) \bigr|\bigl|\operatorname{det}
\nabla^2 X_{|J'}(s) \bigr| |\nonumber\\
&&\hspace*{38pt} X(t)=x, X(s)=y,\nabla
X_{|J}(t)=0, X_{k+1}(t)=z_{k+1},\ldots,
\nonumber
\\[-8pt]
\\[-8pt]
\nonumber
&& \hspace*{38pt} X_{k+k'-m}(t)=z_{k+k'-m}, \nabla X_{|J'}(s)=0,\\
&&\hspace*{130pt}X_{m+1}(s)=w_{m+1}, \ldots, X_{k}(s)=w_{k}
\bigr\}
\nonumber\\
&&\quad {}\times p_{t,s}(x,y,0, z_{k+1}, \ldots,z_{k+k'-m}, 0,
w_{m+1}, \ldots, w_{k} )
\nonumber\\
&&\qquad:= \int\!\!\int_{J\times J'} A(t,s) \,dt\,ds, \nonumber%
\end{eqnarray}
where $p_{t,s}(x,y,0, z_{k+1}, \ldots,z_{k+k'-m}, 0,w_{m+1}, \ldots,
w_{k} )$ is the density of
\[
\bigl(X(t),X(s),\nabla X_{|J}(t), X_{k+1}(t),\ldots,
X_{k+k'-m}(t), \nabla X_{|J'}(s), X_{m+1}(s), \ldots,
X_{k}(s)\bigr)
\]
evaluated at $(x,y,0, z_{k+1}, \ldots,z_{k+k'-m}, 0,w_{m+1}, \ldots,
w_{k} )$.

Let $\{e_1, e_2, \ldots, e_N\}$ be the standard orthonormal basis of
$\R
^N$. For $t\in J$ and $s\in J'$, let $e_{t, s}=(s-t)^T/\|s-t\|$ and let
$\alpha_i(t, s)= \langle e_i, (\La-\La(t))e_{t,s}\rangle $, then
%
%e4.59 #&#
\begin{equation}
\label{Eq:orthnormal basis decomp} \bigl(\La-\La(t)\bigr)e_{t,s}=\sum
_{i=1}^N \bigl\langle e_i, \bigl(\La-\La (t)
\bigr)e_{t,s}\bigr\rangle e_i = \sum
_{i=1}^N \alpha_i(t, s)
e_i.
\end{equation}
By Lemma~\ref{Lem:unifrom posdef}, there exists some $\alpha_0 >0$
such that
%
%e4.60 #&#
\begin{equation}
\label{Eq:posdef} \bigl\langle e_{t,s}, \bigl(\La-\La(t)\bigr)e_{t,s}
\bigr\rangle \geq\alpha_0
\end{equation}
for all $t$ and $s$. Under the assumptions (\ref{Eq:assumption for
faces}) and that all elements in $\ep(J)$ and $\ep(J')$ are 1, we have
the following representation:
\begin{eqnarray*}
t &=& (t_1, \ldots, t_m, t_{m+1},
\ldots, t_k, b_{k+1}, \ldots, b_{k+k'-m}, 0, \ldots,
0),
\\
s &=& (s_1, \ldots, s_m, b_{m+1}, \ldots,
b_k, s_{k+1}, \ldots, s_{k+k'-m}, 0, \ldots, 0),
\end{eqnarray*}
where $t_i \in(a_i, b_i)$ for all $i\in\sigma(J)$ and $s_j \in(a_j,
b_j)$ for all $j \in\sigma(J')$. Therefore,
%
%e4.61 #&#
%e4.62 #&#
%e4.63 #&#
\begin{eqnarray}
\label{Eq:e_k} %
\langle e_i , e_{t,s}\rangle &\geq&0\qquad
\forall m+1 \leq i \leq k,\nonumber
\\
\langle e_i , e_{t,s}\rangle &\leq&0\qquad \forall k+1\leq i \leq
k+k'-m,
\\
\langle e_i , e_{t,s}\rangle &=& 0 \qquad\forall k+k'-m<
i \leq N.\nonumber %
\end{eqnarray}
Let
%
%e4.64 #&#
%e4.65 #&#
%e4.66 #&#
\begin{eqnarray}\qquad
\label{Eq:D_k} %
D_i &= &\bigl\{ (t,s)\in J\times
J'\dvtx\alpha_i (t,s)\geq\beta_i \bigr\}\qquad
\mbox{if } m+1 \leq i \leq k,\nonumber
\\
D_i &=& \bigl\{ (t,s)\in J\times J'\dvtx
\alpha_i (t,s)\leq-\beta_i \bigr\}\qquad \mbox{if } k+1\leq i
\leq k+k'-m,
\\
D_0 &=& \Biggl\{ (t,s)\in J\times J'\dvtx\sum
_{i=1}^m \alpha_i (t,s)\langle
e_i , e_{t,s}\rangle \geq\beta_0 \Biggr\},\nonumber
\end{eqnarray}
where $\beta_0, \beta_1,\ldots, \beta_{k+k'-m}$ are positive constants
such that $\beta_0 + \sum_{i=m+1}^{k+k'-m} \beta_i < \alpha_0$.
It follows from (\ref{Eq:e_k}) and (\ref{Eq:D_k}) that, if $(t,s)$ does
not belong to any of $D_0, D_m, \ldots, D_{k+k'-m}$, then by (\ref
{Eq:orthnormal basis decomp}),
\[
\bigl\langle\bigl(\La-\La(t)\bigr)e_{t,s}, e_{t,s}\bigr \rangle = \sum
_{i=1}^N \alpha _i(t,s) \langle
e_i , e_{t,s}\rangle \leq\beta_0 + \sum
_{i=m+1}^{k+k'-m} \beta_i <
\alpha_0,
\]
which contradicts (\ref{Eq:posdef}). Thus, $D_0\,\cup
\,\bigcup_{i=m+1}^{k+k'-m} D_i$ is a covering of $J\times J'$, by (\ref
{Eq:cross term}),
\[
\E\bigl\{M_u^E (J) M_u^E
\bigl(J'\bigr) \bigr\} \leq\int\!\!\int_{D_0} A(t,s)
\,dt\,ds + \sum_{i=m+1}^{k+k'-m} \int\!\!\int
_{D_i} A(t,s) \,dt\,ds.
\]

We first show that $\int\!\!\int_{D_0} A(t,s) \,dt\,ds$ is
super-exponentially small. Similar to the proof of Theorem~\ref
{Thm:MEC approximation 1}, applying (\ref{Eq:neighboring faces special
2}), (\ref{Eq:neighboring faces special 3}) and (\ref{Eq:neighboring
faces special 4}), we obtain
%
%e4.67 #&#
%e4.68 #&#
%e4.69 #&#
%e4.70 #&#
\begin{eqnarray}
\label{Eq:integral on D0} %
&&\int\!\!\int_{D_0} A(t,s) \,dt\,ds\nonumber
\\
&&\qquad\leq\int\!\!\int_{D_0} \,dt\,ds \int_u^\infty
\,dx p_{\nabla X_{|J}(t),
\nabla X_{|J'} (s)}(0,0)\nonumber\\
&&\qquad\quad{}\times p_{X(t)}\bigl(x|\nabla X_{|J}(t)=0,
\nabla X_{|J'} (s)=0\bigr)\nonumber
\\
&&\qquad\quad{} \times\E\bigl\{\bigl |\operatorname{det} \nabla^2
X_{|J}(t)\bigr| \bigl|
\operatorname{det} \nabla ^2 X_{|J'}(s)\bigr| |\\
&&\hspace*{60pt} X(t)=x, \nabla
X_{|J}(t)=0, \nabla X_{|J'} (s)=0\bigr\}\nonumber
\\
&&\qquad\leq C'_1 \int\!\!\int_{D_0} \,dt\,ds
\int_u^\infty \,dx \bigl(1+x^{N_1}\bigr)
\|s-t\| ^{-m}\nonumber\\
&&\qquad\quad{}\times p_{X(t)}\bigl(x|\nabla X_{|J}(t)=0,
\nabla X_{|J'} (s)=0\bigr),\nonumber %
\end{eqnarray}
for some positive constants $C'_1$ and $N_1$. Due to Lemma~\ref
{Lem:disjoint faces}, we only need to consider the case when $\|s-t\|$
is small.
It follows from Taylor's formula (\ref{Formula:Taylor}) that as $\|
s-t\| \to0$,
%
%e4.71 #&#
%e4.72 #&#
%e4.73 #&#
%e4.74 #&#
%e4.75 #&#
%e4.76 #&#
%e4.77 #&#
\begin{eqnarray}
\label{Eq:conditional var on two gradians} %
&&\operatorname{Var} \bigl(X(t) | \nabla X_{|J} (t),
\nabla X_{|J'} (s)\bigr) \nonumber\\
&&\qquad\leq\operatorname {Var} \bigl(X(t) | X_1
(t),\ldots, X_m (t), X_1 (s),\ldots, X_m (s)
\bigr)\nonumber
\\
&&\qquad = \operatorname{Var} \bigl(X(t) | X_1 (t),\ldots, X_m (t),
X_{1} (t) + \bigl\langle\nabla X_1(t), s-t \bigr\rangle\nonumber\\
&&\hspace*{52pt}{} + \|s-t
\|^{1+\eta}Y^1_{t,s} , \ldots,\nonumber
\\
&& \hspace*{80pt}X_{m} (t) + \bigl\langle\nabla X_m(t), s-t \bigr\rangle + \|s-t\|
^{1+\eta}Y^m_{t,s}\bigr)
\nonumber
\\[-8pt]
\\[-8pt]
\nonumber
&&\qquad= \operatorname{Var} \bigl(X(t) | X_1 (t),\ldots, X_m (t), \bigl\langle
\nabla X_1(t), e_{t,s}\bigr\rangle + \|s-t\|^{\eta}Y^1_{t,s}
, \ldots,
\\
&& \hspace*{181pt}\bigl\langle\nabla X_m(t), e_{t,s}\bigr\rangle + \|s-t
\|^{\eta
}Y^m_{t,s}\bigr)\nonumber
\\
&&\qquad\leq\operatorname{Var} \bigl(X(t) |\bigl\langle\nabla X_1(t), e_{t,s}
\bigr\rangle + \|s-t\| ^{\eta
}Y^1_{t,s} , \ldots, \bigl\langle
\nabla X_m(t), e_{t,s}\bigr\rangle\nonumber \\
&&\hspace*{222pt}{}+ \|s-t\|^{\eta
}Y^m_{t,s}
\bigr)\nonumber
\\
&&\qquad= \operatorname{Var} \bigl(X(t) |\bigl\langle\nabla X_1(t), e_{t,s}\bigr\rangle
, \ldots, \bigl\langle \nabla X_m(t), e_{t,s}\bigr\rangle \bigr) + o(1).\nonumber
\end{eqnarray}
By Lemma~\ref{Lem:eigenvalue}, the eigenvalues of $[\operatorname{ Cov}(\langle
\nabla
X_1(t), e_{t,s}\rangle , \ldots, \langle\nabla X_m(t), e_{t,s}\rangle
)]^{-1}$ are
bounded uniformly in $t$ and $s$. Note that $\E\{X(t)\langle\nabla X_i(t),
e_{t,s}\rangle \}=-\alpha_i(t, s)$. Applying these facts to the last
line of
(\ref{Eq:conditional var on two gradians}), we see that there exist
constants $C_4>0$ and $\ep_0>0$ such that for $\|s-t\|$ sufficiently small,
%
%e4.78 #&#
\begin{eqnarray}
\label{Eq:integral on D0 2} %
\operatorname{Var} \bigl(X(t) | \nabla X_{|J} (t),
\nabla X_{|J'} (s)\bigr) &\leq&\sigma _T^2 -
C_4\sum_{i=1}^m
\alpha^2_i(t, s) + o(1)
\nonumber
\\[-8pt]
\\[-8pt]
\nonumber
&<& \sigma_T^2
- \ep_0, %
\end{eqnarray}
where the last inequality is due to the fact that $(t,s)\in D_0$ implies
\[
\sum_{i=1}^m
\alpha^2_i(t, s)\geq\sum_{i=1}^m
\alpha^2_i (t,s)\bigl|\langle e_i, e_{t,s}
\rangle\bigr |^2 \geq\frac{1}{m} \Biggl(\sum
_{i=1}^m \alpha_i (t,s)\langle
e_{t,s}, e_i \rangle \Biggr)^2 \geq
\frac{\beta_0^2}{m}. %
\]
Plugging (\ref{Eq:integral on D0 2}) into (\ref{Eq:integral on D0}) and
noting that $1/\|s-t\|^m$ is integrable on $J\times J'$, we conclude
that $\int\!\!\int_{D_0} A(t,s) \,dt\,ds$ is finite and super-exponentially small.

Next we show that $\int\!\!\int_{D_i} A(t,s) \,dt\,ds$ is super-exponentially
small for $i= m+1, \ldots, k$. It follows from (\ref{Eq:cross term})
that $\int\!\!\int_{D_i} A(t,s) \,dt\,ds$ is bounded by
%
%e4.79 #&#
%e4.80 #&#
\begin{eqnarray}
\label{Eq:integral on Di} %
&&\int\!\!\int_{D_i} \,dt\,ds \int
_u^\infty \,dx \int_0^\infty
\,dw_i p_{X(t),
\nabla X_{|J}(t), X_i(s), \nabla X_{|J'} (s)}(x,0,w_i,0)\nonumber
\\
&&\qquad\times\E\bigl\{ \bigl|\operatorname{det} \nabla^2 X_{|J}(t)\bigr| \bigl|
\operatorname{det} \nabla^2 X_{|J'}(s)\bigr| |\\
&&\hspace*{45pt}{} X(t)=x, \nabla
X_{|J}(t)=0 , X_i(s)=w_i, \nabla
X_{|J'} (s)=0 \bigr\}.\nonumber %
\end{eqnarray}
We can write
\begin{eqnarray*}
&&p_{X(t), X_i(s)}\bigl(x, w_i|X_i(t)=0\bigr)
\\
&&\qquad= \frac{1}{2\pi\sigma_1(t)\sigma
_2(t,s)(1-\rho^2(t,s))^{1/2}}
\\
&&\qquad\quad{} \times\exp \biggl\{-\frac{1}{2(1-\rho^2(t,s))} \biggl( \frac
{x^2}{\sigma_1^2(t)}+
\frac{w_i^2}{\sigma_2^2(t,s)} - \frac{2\rho(t,s)
xw_i}{\sigma_1(t)\sigma_2(t,s)} \biggr) \biggr\}, %
\end{eqnarray*}
where
\begin{eqnarray*}
\sigma_1^2(t) &=& \operatorname{Var} \bigl(X(t) |
X_i(t)=0\bigr),\qquad \rho(t,s) = \frac
{\E\{ X(t)X_i(s)| X_i(t)=0 \}}{\sigma_1(t)\sigma_2(t,s)},
\\
\sigma_2^2(t,s) & =&\operatorname{Var} \bigl(X_i(s) |
X_i (t)=0\bigr)= \frac{\operatorname{ det Cov}
(X_i(s), X_i(t))}{\lambda _{ii}}, %
\end{eqnarray*}
and $\rho^2(t,s) <1$ due to (H3). Therefore,
%
%e4.81 #&#
%e4.82 #&#
%e4.83 #&#
%e4.84 #&#
%e4.85 #&#
\begin{eqnarray}
\label{Eq:integral on Di 2} %
&&p_{X(t), \nabla X_{|J}(t), X_i(s), \nabla X_{|J'} (s)}(x,0,w_i,0)\nonumber
\\
&&\qquad = p_{\nabla X_{|J'} (s), X_1(t),\ldots,X_{i-1}(t),
X_{i+1}(t),\ldots, X_k(t)}\nonumber\\
&&\qquad\quad{}\times \bigl(0|X(t)=x, X_i(s)=w_i,X_i(t)=0
\bigr)
\nonumber
\\[-8pt]
\\[-8pt]
\nonumber
&& \qquad\quad{}\times p_{X(t), X_i(s)}\bigl(x, w_i|X_i(t)=0
\bigr)p_{X_i(t)}(0)
\\
&&\qquad \leq C_5\exp \biggl\{-\frac{1}{2(1-\rho^2(t,s))} \biggl(
\frac
{x^2}{\sigma_1^2(t)}+\frac{w_i^2}{\sigma_2^2(t,s)} - \frac{2\rho(t,s)
xw_i}{\sigma_1(t)\sigma_2(t,s)} \biggr) \biggr\}
\nonumber\\
&& \qquad\quad{}\times\bigl(\operatorname{detCov} \bigl(X(t), \nabla X_{|J}(t),
X_i(s), \nabla X_{|J'} (s)\bigr)\bigr)^{-1/2}\nonumber
\end{eqnarray}
for some positive constant $C_5$. Also, by an argument that is similar
to the proof of
Theorem~\ref{Thm:MEC approximation 1}, we can show that there exist
positive constants
$C_6, N_2$ and $N_3$ such that
%
%e4.86 #&#
\begin{eqnarray}
\label{Eq:integral on Di 3} \operatorname{ det Cov} \bigl(\nabla X_{|J} (t),
X_i(s), \nabla X_{|J'} (s)\bigr) &\geq& C_6^{-1}
\|s-t\|^{2(m+1)},
\\
\label{Ineq:sigma_2} C_6^{-1}\|s-t\|^2&\leq&
\sigma_2^2(t,s) \leq C_6\|s-t
\|^2
\end{eqnarray}
and
%
%e4.88 #&#
%e4.89 #&#
%e4.90 #&#
%e4.91 #&#
\begin{eqnarray}
\label{Eq:integral on Di 4} %
&&\E\bigl\{ \bigl|\operatorname{ det} \nabla^2
X_{|J} (t) \bigr| \bigl|\operatorname{ det} \nabla^2 X_{|J'}(s)\bigr| |\nonumber\\
&&\hspace*{14pt}X(t)=x, \nabla X_{|J}(t)=0 , X_i(s)=w_i,
\nabla X_{|J'} (s)=0 \bigr\}\nonumber
\\
&&\qquad=\E\bigl\{ \bigl|\operatorname{ det} \nabla^2 X_{|J} (t)\bigr |\bigl |\operatorname{ det}
\nabla^2 X_{|J'}(s)\bigr| | X(t)=x, \nabla X_{|J}(t)=0
,
\\
&& \hspace*{64pt}\bigl\langle\nabla X_i(t), e_{t,s}\bigr\rangle =w_i/\|s-t\|
+o(1), \nabla X_{|J'} (s)=0 \bigr\}\nonumber
\\
&&\qquad\leq C_7\bigl(x^{N_2}+\bigl(w_i/\|s-t
\|\bigr)^{N_3} +1 \bigr).\nonumber %
\end{eqnarray}
Combining (\ref{Eq:integral on Di}) with (\ref{Eq:integral on Di 2}),
(\ref{Eq:integral on Di 3}) and (\ref{Eq:integral on Di 4}), and making
change of variable $w= w_i/\|s-t\|$, we obtain that for some positive
constant $C_8$,
%
%e4.92 #&#
%e4.93 #&#
%e4.94 #&#
%e4.95 #&#
%e4.96 #&#
\begin{eqnarray}
\label{Eq:integral on the Di} %
&&\int\!\!\int_{D_i} A(t,s) \,dt\,ds\nonumber
\\
&&\qquad\leq C_8 \int\!\!\int_{D_i} \,dt\,ds \|s-t
\|^{-m-1} \int_u^\infty \,dx \int
_0^\infty \,dw_i\nonumber\\
&&\qquad\quad{}\times \bigl(x^{N_2}+\bigl(w_i/
\|s-t\|\bigr)^{N_3} +1 \bigr)
\nonumber
\\[-8pt]
\\[-8pt]
\nonumber
&&\qquad\quad{} \times\exp \biggl\{-\frac{1}{2(1-\rho^2(t,s))} \biggl( \frac
{x^2}{\sigma_1^2(t)}+
\frac{w_i^2}{\sigma_2^2(t,s)} - \frac{2\rho(t,s)
xw_i}{\sigma_1(t)\sigma_2(t,s)} \biggr) \biggr\}
\\
&&\qquad= C_8\int\!\!\int_{D_i} \,dt\,ds \|s-t
\|^{-m} \int_u^\infty \,dx \int
_0^\infty \,dw \bigl(x^{N_2}+w^{N_3}
+1 \bigr)\nonumber
\\
&&\qquad\quad{} \times\exp \biggl\{-\frac{1}{2(1-\rho^2(t,s))} \biggl( \frac
{x^2}{\sigma_1^2(t)}+
\frac{w^2}{\wt{\sigma}_2^2(t,s)} - \frac
{2\rho
(t,s) xw}{\sigma_1(t)\wt{\sigma}_2(t,s)} \biggr) \biggr\},\nonumber %
\end{eqnarray}
where $\wt{\sigma}_2(t,s) = \sigma_2(t,s)/\|s-t\|$ is bounded by
(\ref
{Ineq:sigma_2}). Applying Taylor's formula~(\ref{Formula:Taylor}) to
$X_i(s)$ and noting that $\E\{X(t)\langle\nabla X_i(t), e_{t,s}\rangle \}
=-\alpha
_i(t, s)$, we obtain
\begin{eqnarray*}
\rho(t,s) &=&\frac{1}{\sigma_1(t)\sigma_2(t,s)} \biggl(\E\bigl\{ X(t)X_i(s)
\bigr\} - \frac{1}{\lambda _{ii}}\E\bigl\{ X(t)X_i(t)\bigr\}\E\bigl\{
X_i(s)X_i(t)\bigr\} \biggr)
\\
&=& \frac{\|s-t\|}{\sigma_1(t)\sigma_2(t,s)} \biggl( -\alpha_i(t, s)+\| s-t
\|^{\eta}\E\bigl\{ X(t)Y^i_{t,s}\bigr\}
\\
&&\hspace*{62pt}{} - \frac{\|s-t\|^{\eta}}{\lambda _{ii}}\E\bigl\{ X(t)X_i(t)\bigr\}\E\bigl\{
X_i(t)Y^i_{t,s}\bigr\} \biggr). %
\end{eqnarray*}
By (\ref{Ineq:sigma_2}) and the fact that $(t, s)\in D_i$ implies
$\alpha_i (t,s)\geq\beta_i >0$ for $i= m+1, \ldots, k$, we conclude
that $\rho(t,s)\leq-\delta_0$ for some $\delta_0>0$ uniformly for
$t\in J$, $s\in J'$ with $\|s-t\|$ sufficiently small. Then
applying Lemma~\ref{Lem:rho} to (\ref{Eq:integral on the Di}) yields
that $\int\!\!\int_{D_i} A(t,s) \,dt\,ds$ is super-exponentially small.

It is similar to prove that $\int\!\!\int_{D_i} A(t,s) \,dt\,ds$ is
super-exponentially small for $i=k+1, \ldots, k+k'-m$. In fact, in such
case, $\int\!\!\int_{D_i} A(t,s) \,dt\,ds$ is bounded by
\begin{eqnarray*}
&&\int\!\!\int_{D_i} \,dt\,ds \int_u^\infty
\,dx \int_0^\infty \,dz_i
p_{X(t),
\nabla X_{|J}(t), X_i(t), \nabla X_{|J'} (s)}(x,0,z_i,0)
\\
&&\qquad{} \times\E\bigl\{ \bigl|\operatorname{det} \nabla^2 X_{|J}(t)\bigr| \bigl|
\operatorname{det} \nabla^2 X_{|J'}(s)\bigr| |\\
&&\hspace*{48pt}{} X(t)=x, \nabla
X_{|J}(t)=0 , X_i(t)=z_i, \nabla
X_{|J'} (s)=0 \bigr\}. %
\end{eqnarray*}
We can follow the proof in the previous stage by exchanging the
positions of $X_i(s)$ and $X_i(t)$ and replacing $w_i$ with $z_i$. The
details are omitted since the procedure is very similar.

If $k=0$, then $m=0$ and $\sigma(J')=\{1, \ldots, k'\}$. Since $J$
becomes a singleton, we may let $J=\{t_0\}$. By the Kac--Rice
metatheorem, $\E\{M_u (J) M_u (J') \}$ is bounded by
\begin{eqnarray*}
&&\int_{J'} \,ds \int_u^\infty
\,dx \int_u^\infty \,dy \int_0^\infty
\,dz_{1} \cdots\int_0^\infty
\,dz_{k'} p_{t_0,s}(x,y,z_{1}, \ldots,z_{k'},
0)
\\
&&\quad {}\times\E\bigl\{ \bigl|\operatorname{det} \nabla^2 X_{|J'}(s) \bigr| |\\
&&\hspace*{36pt}{}X(t_0)=x, X(s)=y, X_{1}(t_0)=z_{1},
\ldots, X_{k'}(t_0)=z_{k'}, \nabla
X_{|J'}(s)=0 \bigr\}
\\
&&\qquad:= \int_{J'} \wt{A}(t_0,s) \,ds, %
\end{eqnarray*}
where $p_{t_0,s}(x,y,z_{1}, \ldots,z_{k'}, 0)$ is the density of
$(X(t_0),X(s),X_{1}(t_0),\ldots, X_{k'}(t_0),\break  \nabla X_{|J'}(s))$
evaluated at $(x,y,z_{1}, \ldots,z_{k'}, 0)$. Similarly, $J'$ could be
covered by $\bigcup_{i=1}^{k'}\wt{D}_i$ with $\wt{D}_i = \{ s\in
J'\dvtx\alpha
_i (t_0,s)\leq-\wt{\beta}_i \}$ for some positive constants $\wt
{\beta
}_i$, $1\leq i \leq k'$. On the other hand,
\begin{eqnarray*}
\int_{\wt{D}_i} \wt{A}(t_0,s) \,ds &\leq&
\int_{\wt{D}_i} \,ds \int_u^\infty \,dx
\int_0^\infty \,dz_{i}
p_{X(t_0), X_{i}(t_0), \nabla
X_{|J'}(s)}(x,z_{i},0)
\\
& &{}\times\E\bigl\{ \bigl|\operatorname{det} \nabla^2 X_{|J'}(s) \bigr| |
X(t_0)=x, X_{i}(t_0)=z_{i},
\nabla X_{|J'}(s)=0 \bigr\}. %
\end{eqnarray*}
By similar discussions, we obtain that $\E\{M_u^E (J) M_u^E (J') \}$
is super-exponen\-tially small, and hence complete the proof.
\end{pf*}

%s5 #&#
\section{Further remarks and examples}\label{sec5}

%re4 #&#
\begin{remark}[(The case when $T$
contains the origin)]\label{Remark:contain the origin}
We now show that the conclusions of Theorems \ref{Thm:MEC
approximation 1}
and \ref{Thm:MEC approximation 2} still hold when $T$ contains
the origin.
In such case, condition (H3) is actually not satisfied since
$X(0)=0$ is degenerate
[this is in contrast with the case when $X=\{X(t), t \in\R^N\}$ is
assumed to have constant variance].
We will in fact prove a more general result. Let $T_0 \subset T$ be a
finite union of compact
rectangles such that $\sup_{t\in T_0} \nu(t) < \sigma_T^2$, then
%However, we may construct a small open cube $T_0$ containing 0 such
%that $\sup_{t\in T_0} \nu(t)$
%is sufficiently small (so it is strictly smaller than $\sigma_T^2$),
according to the Borell--TIS inequality, $\P\{\sup_{t\in T_0} X(t)\geq
u\}$
is super-exponentially small. Let $\widehat{T}=T\setminus T_0$, then
%
%e5.1 #&#
\begin{eqnarray}
\label{Eq:containing origin} \P \Bigl\{\sup_{t\in\widehat{T}} X(t)\geq u \Bigr\}
&\leq&\P
\Bigl\{ \sup_{t\in T} X(t)\geq u \Bigr\}
\nonumber
\\[-8pt]
\\[-8pt]
\nonumber
&\leq&\P \Bigl\{\sup
_{t\in\widehat{T}} X(t)\geq u \Bigr\} + \P \Bigl\{ \sup
_{t\in T_0} X(t)\geq u \Bigr\}.
\end{eqnarray}
To estimate $\P\{\sup_{t\in\widehat{T} } X(t)\geq u\}$, similar to
the rectangle $T$, we decompose
$\widehat{T}$ into several faces by lower dimensions such that
$\widehat{T}
= \bigcup_{k=0}^N \partial_k \widehat{T}= \bigcup_{k=0}^N \bigcup_{L\in\partial
_k \widehat{T}} L$.
Then we can get the bounds similar to (\ref{Ineq:lowerbound and upperbound})
with $T$ replaced with $\widehat{T}$ and $J$ replaced with $L$.
Following the proof of Theorem~\ref{Thm:MEC approximation 1} yields
\[
\P \Bigl\{\sup_{t\in\widehat{T}} X(t) \geq u \Bigr\} = \sum
_{k=0}^N\sum_{L\in\partial_k \widehat{T}} \E
\bigl\{M_u (L) \bigr\} + o\bigl(e^{-\alpha u^2 - u^2/{(2\sigma_T^2)} }\bigr).
\]
Because $\sup_{t\in T_0} \nu(t) < \sigma_T^2$, %$ is sufficiently
%small,
we can show that terms $\E\{M_u (L) \}$ are
super-exponentially small for all faces $L$ such that $L \subset
\partial_k \bar{T}_0$ with
$0\leq k\leq N-1$. The same reasoning yields that
for $1\leq k\leq N$, $L\in\partial_k
\widehat{T}$, $J\in\partial_k T$ such that $L\subset J$, the
difference between
$\E\{M_u (L) \}$ and $\E\{M_u (J) \}$ is super-exponentially small.
Hence, we obtain
%
%e5.2 #&#
%e5.3 #&#
\begin{eqnarray}
\label{Eq:containing origin 1} %
&&\P \Bigl\{\sup_{t\in\widehat{T}} X(t) \geq u
\Bigr\}\nonumber\\
&&\qquad= \sum_{\{t\}
\in
\partial_0 T} \Psi \biggl(\frac{u}{\sqrt{\nu(t)}}
\biggr) + \sum_{k=1}^N\sum
_{J\in
\partial
_k T} \frac{1}{(2\pi)^{(k+1)/2}|\La_J|^{1/2}}
\\
&&\qquad\quad{} \times\int_J \frac{|\La_J-\La_J(t)|}{\theta^k_t} H_{k-1}
\biggl(\frac{u}{\theta_t} \biggr)e^{-u^2/{(2\theta^2_t)}} \,dt + o\bigl(e^{-\alpha u^2 - u^2/{(2\sigma_T^2) }}
\bigr).\nonumber %
\end{eqnarray}
Here, by convention, if $\theta_t =0$, we regard $e^{-u^2/{(2\theta
^2_t)}}$ as 0. Combining
(\ref{Eq:containing origin}) with~(\ref{Eq:containing origin 1}), we
conclude that Theorem~\ref{Thm:MEC approximation 1} still holds when $T$ contains the origin.
The argument for
Theorem~\ref{Thm:MEC approximation 2} is similar.
\end{remark}

%re5 #&#
\begin{remark}
Based on the proofs of Theorems \ref{Thm:MEC approximation 1} and \ref
{Thm:MEC approximation 2},
one may expect that the approximation (\ref{Eq:MEC approx error}) holds
for a much wider class
of smooth Gaussian fields (not necessarily with stationary increments).
Meanwhile, the argument
for the parameter set could go far beyond the rectangle case. These
further developments are in \citet{ChengPhD}.
\end{remark}

%re6 #&#
\begin{remark}[(Refinements of Theorem~\ref{Thm:MEC approximation 1})]\label{Example:unique maximum 1}
Let $X$ be a Gaussian field as in Theorem~\ref{Thm:MEC approximation
1}. Suppose that $\nu(t_0)= \sigma_T^2$
for some $t_0 \in J\in\partial_k T$ ($k \ge0$) and $\nu(t)< \sigma
_T^2$ for all $t \in T\setminus\{ t_0\}$.

(i) If $k=0$, then, due to (\ref{Condition:boundary max point}),
$\sup_{t\in T\setminus\{t_0\}}
\theta_t^2\leq\sigma_T^2-\ep_0$ for some $\ep_0>0$. This implies that
$\E\{M_u(J')\}$ are super-exponentially small for all faces $J'$ other
than $\{t_0\}$. Therefore, there
is a constant $\alpha> 0$ such that
%
%e5.4 #&#
\begin{equation}
\label{Eq:maximum at corner} \P \Bigl\{\sup_{t\in T} X(t) \geq u \Bigr\} = \Psi
\biggl(\frac
{u}{\sigma
_T} \biggr)+ o \bigl(e^{ - \alpha u^2 -u^2/(2\sigma_T^2) } \bigr)\qquad \mbox{as } u
\to\infty.
\end{equation}
For example, let $Y$ be a stationary isotropic Gaussian field with
covariance $\rho(t)=e^{-\|t\|^2}$ and
define $X(t)=Y(t)-Y(0)$. Then $X$ is a smooth Gaussian field with
stationary increments satisfying
conditions (H1)--(H3). Let $T=[0,1]^N$, then we can apply
(\ref{Eq:maximum at corner})
to approximate the excursion probability of $X$ with $t_0=(1, \ldots, 1)$.

(ii) If $k \geq1$, then similarly, $\E\{M_u(J')\}$ are
super-exponentially small
for all faces $J'\neq J$. It follows from Theorem~\ref{Thm:MEC
approximation 1} that
\[
\P \Bigl\{\sup_{t\in T} X(t) \geq u \Bigr\} =
\frac{u^{k-1}}{(2\pi
)^{(k+1)/2}|\La_J|^{1/2}} \int_J \frac{|\La_J-\La_J(t)|}{\theta^{2k-1}_t}
e^{-u^2/{(2\theta
^2_t)}} \,dt\bigl(1+o(1)\bigr). %
\]
Let $\tau(t)= \theta^2_t$, then $\forall i \in\sigma(J)$, $\tau
_i(t_0)=0$, since $t_0$ is a local
maximum point of $\tau$ restricted on $J$. Assume additionally that the
Hessian matrix
%
%e5.5 #&#
\begin{equation}
\label{Def:Theta} \Theta_J(t_0):= \bigl(
\tau_{ij}(t_0)\bigr)_{i, j \in\sigma(J)}
\end{equation}
(here $\tau_{ij} = \partial^2 \tau/\partial t_i\, \partial t_j$) is
negative definite,
then the Hessian matrix of $1/{(2\theta^2_t)}$ at $t_0$ restricted on $J$,
\[
\wt{\Theta}_{J}(t_0)= -\frac{1}{2\tau^2(t_0)} \bigl(
\tau_{ij}(t_0)\bigr)_{i, j
\in\sigma(J)} = -\frac{1}{2\sigma^4_T}
\Theta_J(t_0),
\]
is positive definite. Let $g(t) = |\La_J-\La_J(t)|/\theta^{2k-1}_t$ and
$h(t) = 1/(2\theta^2_t)$,
applying Lemma~\ref{Lem:Laplace method 1} in the \hyperref
[app]{Appendix} with $T$
replaced by $J$ yields that as $u \to\infty$,
%
%e5.6 #&#
%e5.7 #&#
\begin{eqnarray}
\label{Eq:asymt for unique maximum 1} %
&&\P \Bigl\{\sup_{t\in T} X(t) \geq u
\Bigr\} \nonumber\\
&&\qquad= \frac{u^{k-1}|\La
_J-\La
_J(t_0)|}{(2\pi)^{(k+1)/2}|\La_J|^{1/2}
\theta^{2k-1}_{t_0}} \frac{(2\pi)^{k/2}}{u^{k} |\wt{\Theta
}_{J}(t_0)|^{1/2}} e^{-u^2/(2\theta^2_{t_0})} \bigl(1+o(1)
\bigr)
\\
&&\qquad = \frac{ 2^{k/2} |\La_J-\La_J(t_0)|}{|\La_J|^{1/2}|-\Theta
_{J}(t_0)|^{1/2}}\Psi \biggl(\frac{u}{\sigma_T} \biggr) \bigl(1+o(1)\bigr).\nonumber
\end{eqnarray}
\end{remark}

%ex1 #&#
\begin{example}[(The cosine field)] \label{Example:unique maximum 1b} We
consider the \emph{cosine random field} $Z$ on $\R^2$ [cf. Adler and
Taylor (\citeyear{AT07}), page~382]:
\[
Z(t)=\frac{1}{\sqrt{2}}\sum_{i=1}^2
\bigl(\xi_i\cos t_i + \xi_i'\sin
t_i\bigr),\qquad t=(t_1,t_2)\in\R^2,
\]
where $\xi_1$, $\xi_1'$, $\xi_2$, $\xi_2'$ are independent, standard
Gaussian variables. Clearly
$Z$ is a centered, unit-variance and smooth stationary Gaussian field.
Moreover, $Z$ is periodic
and $Z(t)=-Z_{11}(t)-Z_{22}(t)$. Let $X(t)=Z(t)-Z(0)$ %$X(t)=
%\xi_0+Z(t)-Z(0)$, where
for all $t\in T\subset[0,2\pi)^2$. %and $\xi_0$ is a standard Gaussian
%variable independent of $Z$.
Then $X$ is a centered and smooth Gaussian field with stationary
increments with $X(0) = 0$.
The variogram and covariance of $X$ are given respectively by
%
%e5.8 #&#
%e5.9 #&#
\begin{eqnarray}
\label{Eq:cov in cos field} %
\nu(t)&=& 2-\cos t_1 - \cos t_2,
\nonumber
\\[-8pt]
\\[-8pt]
\nonumber
C(t,s)&=&1+ \frac{1}{2}\sum_{i=1}^2
\bigl[\cos(t_i-s_i) - \cos t_i - \cos
s_i\bigr]. %
\end{eqnarray}
Taking the partial derivatives of $C$ gives
%
%e5.10 #&#
%e5.11 #&#
\begin{eqnarray}
\label{Eq:derivatives in cos field} %
\E\bigl\{X(t)\nabla X(t)\bigr\} &=& \tfrac{1}{2}(
\sin t_1, \sin t_2)^T,\qquad \La = \operatorname{ Cov}\bigl(
\nabla X(t)\bigr) =\tfrac{1}{2} I_2,
\nonumber
\\[-8pt]
\\[-8pt]
\nonumber
\La- \La(t)&=&-\E\bigl\{X(t)\nabla^2 X(t)\bigr\}= \tfrac{1}{2}
\bigl[I_2 - \operatorname{ diag}(\cos t_1, \cos t_2)
\bigr], %
\end{eqnarray}
where $I_2$ is the $2\times2$ identity matrix and diag denotes the
diagonal matrix. Therefore, $X$
satisfies conditions (H1) and (H2) on $ T\subset
(0,2\pi)^2
$. Even though
condition (H3) is not fully satisfied [because
$X_{12}(t)\equiv0$
and $X(t)=-2X_{11}(t)-2X_{22}(t)$ when $t=(\pi,\pi)$], it can be shown
that this does not affect the validity of Theorems~\ref{Thm:MEC}, \ref
{Thm:MEC approximation 1} and
\ref{Thm:MEC approximation 2} for the random field $\{Z(t) - Z(0), t
\in\R^2\}$ with $ T\subset(0,2\pi)^2$.

(i) Let $T=[0, \pi/2]^2$. Then by (\ref{Eq:cov in cos field}), $\nu
(t)$ attains its maximum 2 only at the upper-right
vertex $(\pi/2, \pi/2)$, where both partial derivatives of $\nu$ are
positive. By Remark~\ref{Remark:contain the origin}
(with $T_0 = [0, \varepsilon]\times[0, \pi/2] \cup[0, \pi/2]\times[0,
\varepsilon]$, where $\varepsilon>0$ is sufficiently small) and the
result (i) in Remark~\ref{Example:unique maximum 1}, we obtain $\P\{\sup_{t\in T} X(t)
\geq
u \} = \Psi(u/\sqrt{2})( 1+ o(e^{-\alpha u^2}))$.

(ii) Let $T=[0, 3\pi/2]\times[0, \pi/2]$. Then $\nu(t)$ attains its
maximum 3 only at the boundary point
$t^*=(\pi, \pi/2)$, where $\nu_2(t^*)>0$ so that condition (\ref
{Condition:boundary max point}) is satisfied.
In this case, $t^*\in J=(0, 3\pi/2)\times\{\pi/2\}$. By (\ref
{Eq:derivatives in cos field}), we obtain $\La_J =
\frac{1}{2}$ and $\La_J-\La_J(t^*)=\frac{1}{2}(1-\cos t_1^*)=1$. On the
other hand, for $t\in J$, by
(\ref{Eq:derivatives in cos field}),
%
%e5.12 #&#
\begin{equation}
\tau(t)=\theta^2_t=\operatorname{ Var}\bigl(X(t)|X_1(t)
\bigr)= 2-\cos t_1 - \cos t_2 - \tfrac
{1}{2}
\sin^2 t_1.
\end{equation}
Therefore, $\Theta_{J}(t^*)=\tau_{11}(t^*)= -2$. By plugging these into
(\ref{Eq:asymt for unique maximum 1}) with $k=1$,
we have $\P\{\sup_{t\in T} X(t) \geq u \} = \sqrt{2}\Psi(u/\sqrt{3})(
1+ o(1))$.

(iii) Let $T=[0, 3\pi/2]^2$. Then $\nu(t)$ attains its maximum 4
only at the interior point $t^*=(\pi, \pi)$.
In this case, $t^*\in J=(0, 3\pi/2)^2$. By (\ref{Eq:derivatives in cos
field}), we obtain $\La_J = \frac{1}{2}I_2$
and $\La_J-\La_J(t^*)=I_2$. On the other hand, for $t\in J$, by (\ref
{Eq:derivatives in cos field}),
%
%e5.13 #&#
\begin{eqnarray}
\tau(t)=\theta^2_t&=&\operatorname{ Var}\bigl(X(t)|X_1(t),X_2(t)
\bigr)
\nonumber
\\[-8pt]
\\[-8pt]
\nonumber
&=& 2-\cos t_1 - \cos t_2 - \tfrac{1}{2}
\sin^2 t_1 - \tfrac{1}{2}\sin^2
t_2.
\end{eqnarray}
Therefore, $\Theta_{J}(t^*)=(\tau_{ij}(t^*))_{i,j=1,2}= -2I_2$. By
plugging these into (\ref{Eq:asymt for unique maximum 1})
with $k=2$, we obtain $\P\{\sup_{t\in T} X(t) \geq u \} = 2\Psi(u/2)(
1+ o(1))$.
\end{example}

%re7 #&#
\begin{remark}[(Refinements of Theorem~\ref{Thm:MEC approximation 2})]\label{Example:unique maximum 2}
Let $X$ be a Gaussian field as in Theorem~\ref{Thm:MEC approximation
2}. Suppose $t_0\in J\in\partial_k T$
is the only point in $T$ such that $\nu(t_0)= \sigma_T^2$. Assume
$\sigma(J)=\{1,\ldots,k\}$, all elements
in $\ep(J)$ are 1, $\nu_{k'}(t_0) =0$ for all $k+1\leq k' \leq N$. Then
by Theorem~\ref{Thm:MEC approximation 2},
%
%e5.14 #&#
\begin{eqnarray}
\label{Eq:unique max point 1}&&\P \Bigl\{\sup_{t\in T} X(t) \geq u \Bigr\}
\nonumber
\\[-8pt]
\\[-8pt]
\nonumber
&&\qquad = \E
\bigl\{M_u^E (J) \bigr\} + \sum
_{k'=k+1}^N \sum_{J'\in\partial_{k'} T, \bar{J'}\cap\bar{J}\neq\varnothing} \E
\bigl\{ M_u^E \bigl(J'\bigr) \bigr\} + o
\bigl(e^{-\alpha u^2 - u^2/{(2\sigma_T^2)} }\bigr).\hspace*{-20pt}
\end{eqnarray}
Lemma~\ref{Lem:mean M_u^E} indicates $\E\{M_u^E(J)\}=(-1)^k \E\{\sum^k_{i=0} (-1)^i \mu_i(J)\}(1+ o(e^{-\alpha x^2}))$.
Therefore,
%
%e5.15 #&#
%e5.16 #&#
%e5.17 #&#
%e5.18 #&#
%e5.19 #&#
\begin{eqnarray}
\label{Eq:reduced ME} %
\E\bigl\{M_u^E(J)\bigr\} &=&
(-1)^k\int_J p_{\nabla X_{|J}(t)}(0) \,dt\nonumber\\
&&{}\times \E\bigl\{
\operatorname{ det} \nabla^2 X_{|J}(t) \mathbh{1}_{\{(X_{k+1}(t),\ldots, X_N(t))\in
\R
^{N-k}_+ \}}
\nonumber\\
&&\hspace*{82pt}{} \times\mathbh{1}_{\{X(t)\geq u\}
} |\nabla X_{|J}(t)=0 \bigr\}
\bigl(1+ o\bigl(e^{-\alpha x^2}\bigr)\bigr)\nonumber
\\
&=& \int_u^\infty \,dx \int_J
\,dt \frac{(-1)^k e^{-x^2/{(2\theta
^2_t)}}}{(2\pi)^{(k+1)/2}|\La_J|^{1/2}\theta_t} \\
&&{}\times \E\bigl\{ \operatorname{ det} \nabla^2
X_{|J}(t)\mathbh{1}_{\{(X_{k+1}(t),\ldots,X_N(t))\in\R^{N-k}_+ \}}|
\nonumber\\
&&\hspace*{92pt} X(t)=x, \nabla X_{|J}(t)=0 \bigr\} \bigl(1+ o\bigl(e^{-\alpha u^2}
\bigr)\bigr)
\nonumber\\
&:=& \int_u^\infty A_J(x) \,dx\bigl(1+
o\bigl(e^{-\alpha u^2}\bigr)\bigr).\nonumber %
\end{eqnarray}
Similarly, we have
\begin{eqnarray*}
&&\E\bigl\{M_u^E\bigl(J'\bigr)
\bigr\}\nonumber\\
&&\qquad= \int_u^\infty \,dx \int
_{J'} \,dt \frac{(-1)^{k'}
e^{-x^2/{(2\theta^2_t)}}}{(2\pi)^{(k'+1)/2}|\La_{J'}|^{1/2}\theta
_t}
\nonumber
\\[-8pt]
\\[-8pt]
\nonumber
&&\qquad\quad{}\times\E \bigl\{ \operatorname{ det}
\nabla^2 X_{|J'}(t)
\mathbh{1}_{\{(X_{J'_1}(t),\ldots,X_{J'_{N-k'}}(t))\in\R
^{N-k'}_+ \}} |\\
&&\hspace*{136pt} X(t)=x, \nabla X_{|J'}(t)=0 \bigr
\} \bigl(1+ o\bigl(e^{-\alpha u^2}\bigr)\bigr).\nonumber %
\end{eqnarray*}

(i) First, we consider the case $k \geq1$. We use the same notation
$\tau(t)$,
$\Theta_J(t)$ and $\wt{\Theta}_J(t)$ in Remark~\ref{Example:unique
maximum 1}. Let $h(t) = 1/(2\theta^2_t)$ and
\begin{eqnarray*}
&&g_x(t)= \frac{(-1)^k}{(2\pi)^{(k+1)/2}|\La_J|^{1/2}\theta_t}\E\bigl\{ \operatorname{ det}
\nabla^2 X_{|J}(t) \mathbh{1}_{\{(X_{k+1}(t),\ldots,X_N(t))\in\R^{N-k}_+ \}}
|\\
&&\hspace*{209pt}X(t)=x, \nabla X_{|J}(t)=0 \bigr\}. %
\end{eqnarray*}
Note that $\sup_{t\in T} |g_x(t)| = o(x^{N_1})$ for some $N_1>0$ as
$x\to\infty$, which implies that
the growth of $g_x(t)$ can be dominated by the exponential decay
$e^{-x^2h(t)}$, hence both Lemmas
\ref{Lem:Laplace method 1} and \ref{Lem:Laplace method 2} in the
\hyperref[app]{Appendix} are still applicable. Applying Lemma~\ref{Lem:Laplace method 1} with $T$ replaced by $J$ and $u$ replaced by
$x^2$, we obtain that as $x\to\infty$,
%
%e5.20 #&#
\begin{equation}
\label{Eq:A_Jx} %
A_J(x) =\frac{(2\pi)^{k/2} }{x^k (\operatorname{ det} \wt{\Theta
}_J(t_0))^{1/2}}g_x(t_0)
e^{-x^2/(2\sigma_T^2)} \bigl(1+ o(1)\bigr). %
\end{equation}
On the other hand, it follows from (\ref{Eq:formula for det 2}) that
\begin{eqnarray*}
g_x(t)&=& \frac{1}{(2\pi)^{(k+1)/2}|\La_J|^{1/2}\theta_t}\int\cdots \int
_{\R^{N-k}_+} \,dy_{k+1} \cdots \,dy_N
\\
&&{} \times\frac{|\La_{J}-\La_{J}(t)|}{\gamma_t^k} H_k \biggl(\frac
{x}{\gamma_t}+
\gamma_tC_{k+1}(t)y_{k+1} + \cdots+
\gamma_tC_N(t)y_N \biggr)
\\
&&{} \times p_{X_{k+1}(t),\ldots,X_N(t)}\bigl(y_{k+1}, \ldots, y_N|
X(t)=x, \nabla X_{|J}(t)=0\bigr). %
\end{eqnarray*}
Note that, under the assumptions on $X$ at the beginning of Remark~\ref
{Example:unique maximum 2},
$X(t_0)$ and $\nabla X(t_0)$ are independent, and $C_j(t_0)=0$
for all $1\leq j \leq N$. Therefore,
\begin{eqnarray*}
g_x(t_0)&=& \frac{|\La_{J}-\La_{J}(t_0)|}{(2\pi)^{(k+1)/2}|\La
_J|^{1/2}\sigma_T^{k+1}}
H_k \biggl(\frac{x}{\sigma_T} \biggr)
\\
&&{} \times\P\bigl\{\bigl(X_{k+1}(t_0),\ldots,X_N(t_0)
\bigr)\in\R^{N-k}_+|\nabla X_{|J}(t_0)=0\bigr\}.
\end{eqnarray*}
Plugging this and (\ref{Eq:A_Jx}) into (\ref{Eq:reduced ME}), we obtain
%
%e5.21 #&#
%e5.22 #&#
\begin{eqnarray}
\label{Eq:unique max point 4} %
&&\E\bigl\{M_u^E(J)\bigr\}\nonumber\\
&&\qquad =
\frac{ 2^{k/2} |\La_{J}-\La_{J}(t_0)|}{|\La
_{J}|^{1/2}|-\Theta_{J}(t_0)|^{1/2}} \Psi \biggl(\frac{u}{\sigma_T} \biggr)
\\
&&\qquad\quad{} \times\P\bigl\{\bigl(X_{k+1}(t_0),\ldots,X_N(t_0)
\bigr)\in\R^{N-k}_+|\nabla X_{|J}(t_0)=0\bigr\}
\bigl(1+o(1)\bigr).\nonumber %
\end{eqnarray}
For $J'\in\partial_{k'} T$ with $\bar{J'}\cap\bar{J}\neq
\varnothing$, we
apply Lemma~\ref{Lem:Laplace method 2} with $T$ replaced by $J'$
to obtain
%
%e5.23 #&#
%e5.24 #&#
\begin{eqnarray}
\label{Eq:unique max point 5} %
&&\E\bigl\{M_u^E
\bigl(J'\bigr)\bigr\}\nonumber \\
&&\qquad = \frac{ 2^{k'/2} |\La_{J'}-\La_{J'}(t_0)|}{|\La
_{J'}|^{1/2}|-\Theta_{J'}(t_0)|^{1/2}}\Psi \biggl(
\frac{u}{\sigma
_T} \biggr)\P \bigl\{Z_{J'}(t_0)\in
\R^{k'-k}_- \bigr\}
\\
&&\qquad\quad{} \times\P\bigl\{\bigl(X_{J'_1}(t_0),\ldots,X_{J'_{N-k'}}(t_0)
\bigr)\in\R ^{N-k'}_+|\nabla X_{|J'}(t_0)=0\bigr\}
\bigl(1+o(1)\bigr),\nonumber %
\end{eqnarray}
where $Z_{J'}(t_0)$ is a centered $(k'-k)$-dimensional Gaussian vector
with covariance matrix
$-(\tau_{ij})_{i,j\in\sigma(J')\setminus\sigma(J)}$. Plugging
(\ref
{Eq:unique max point 4})
and (\ref{Eq:unique max point 5}) into (\ref{Eq:unique max point 1}),
we obtain the asymptotic result.

(ii) $k=0$, say $J=\{t_0\}$. Note that $X(t_0)$ and $\nabla X(t_0)$
are independent, therefore,
%
%e5.25 #&#
\begin{equation}
\label{Eq:unique max point 2} %
\E\bigl\{M_u^E (J) \bigr\} =
\Psi \biggl(\frac{u}{\sigma_T} \biggr)\P\bigl\{\nabla X(t_0) \in
\R^N_+\bigr\}. %
\end{equation}
For $J'\in\partial_{k'} T$ with $\bar{J'}\cap\bar{J}\neq
\varnothing$,
then $\E\{M_u^E(J')\}$ is
given by (\ref{Eq:unique max point 5})
with $k=0$. Plugging (\ref{Eq:unique max point 2}) and (\ref{Eq:unique
max point 5}) into
(\ref{Eq:unique max point 1}), we obtain the asymptotic formula for the
excursion probability.
\end{remark}

%ex2 #&#
\begin{example}[(Continued: The cosine field)]
\label{Example:unique maximum 2b} We consider the Gaussian field $X= \{X(t), t \in\R^2\}$ defined in
Example~\ref{Example:unique maximum 1b}.

(i) Let $T=[0, \pi]^2$. Then $\nu(t)$ attains its maximum 4 only at
the corner $t^*=(\pi, \pi)$,
where $\nabla\nu(t^*)=0$ so that the condition (\ref
{Condition:boundary max point}) is not satisfied.
Instead, we will use the result (ii) in Remark~\ref{Example:unique
maximum 2} with $J=\{t^*\}$ and
$k=0$. Let $J'=(0, \pi)\times\{\pi\}$, $J''=\{\pi\}\times(0, \pi)$.
Combining the results in Example~\ref{Example:unique maximum 1b} with (\ref{Eq:unique max point 2}) and
(\ref{Eq:unique max point 5}), and noting that $\La=\frac{1}{2}I_2$
implies $X_1(t)$ and $X_2(t)$
are independent for all $t$, we obtain
\begin{eqnarray*}
\E\bigl\{M_u^E (J)\bigr\}&=&
\tfrac{1}{4}\Psi(u/2),\qquad \E\bigl\{M_u^E (
\partial_2 T)\bigr\} = \tfrac{1}{2}\Psi(u/2) \bigl(1+o(1)\bigr),
\\
\E\bigl\{M_u^E \bigl(J'\bigr)\bigr\}&=&\E
\bigl\{M_u^E \bigl(J''\bigr)
\bigr\}=\tfrac{\sqrt{2}}{4}\Psi(u/2) \bigl(1+o(1)\bigr). %
\end{eqnarray*}
Summing these up, we have $\P\{\sup_{t\in T} X(t) \geq u \}
=[(3+2\sqrt
{2})/4]\Psi(u/2)(1+o(1))$.

(ii) Let $T=[0, 3\pi/2]\times[0, \pi]$. Then $\nu(t)$ attains its
maximum 4 only at the boundary
point $t^*=(\pi, \pi)$, where $\nu_2(t^*)=0$. Applying the result (i)
in Remark~\ref{Example:unique maximum 2} with $J=(0, 3\pi/2)\times\{\pi\}$ and
$k=1$, we obtain
\[
\E\bigl\{M_u^E (J)\bigr\}= \tfrac{\sqrt{2}}{2}\Psi(u/2),\qquad
\E\bigl\{M_u^E (\partial_2 T)\bigr\} =
\Psi(u/2) \bigl(1+o(1)\bigr),
\]
which implies that $\P\{\sup_{t\in T} X(t) \geq u \}=[(2+\sqrt
{2})/2]\Psi(u/2)(1+o(1))$.
\end{example}

%re8 #&#
\begin{remark} Note that we only provide the first-order approximation
for the examples in this section.
However, as shown in the theory of the approximations of integrals
[see, e.g., \citet{W01}], the integrals
in (\ref{Eq:excursion probability for special case}) and (\ref{Eq:MEC
approximation 2}) can be expanded
with more terms once the covariance function of the Gaussian field is
smooth enough. Hence, for the examples
above, higher-order approximation is available. Since the procedure is
similar and the computation
is tedious, we omit such argument here.
\end{remark}

\begin{appendix}\label{app}
%s6 #&#
\section*{Appendix}

This appendix contains proofs of Lemmas \ref{Lem:unifrom posdef}, \ref
{Lem:rho}
and some other auxiliary facts.

\begin{pf*}{Proof of Lemma~\ref{Lem:unifrom posdef}}
Let $M_{N\times N}$ be the set of all $N\times N$ matrices. Define a
mapping $\phi:
\R^N \times M_{N\times N} \rightarrow\R$ by $ (\xi, A) \mapsto\langle
\xi
, A\xi\rangle $.
Then $\phi$ is continuous. Since $ \La-\La(t)$ is positive definite,
we have $\phi(e, \La-\La(t))>0$ for all $t\in T$ and $e\in\mathbb
{S}^{N-1}$.
The conclusion of the lemma follows from this and the fact that $\{(e,
\La-\La(t))\dvtx t\in T, e\in\mathbb{S}^{N-1}\}$ is a compact
subset of $\R^N \times
M_{N\times N}$
and $\phi$ is continuous.
\end{pf*}

\begin{pf*}{Proof Lemma~\ref{Lem:rho}} We only prove the first
case, since the
second case follows from the first one. By elementary computation on
the joint
density of $\xi_1(t)$ and $\xi_2(s)$, we obtain
\begin{eqnarray*}
&&\sup_{t\in T_1, s\in T_2} \E \bigl\{\bigl(1+\bigl|
\xi_1(t)\bigr|^{N_1} + \bigl|\xi _2(s)\bigr|^{N_2}
\bigr) \mathbh{1}_{\{\xi_1(t) \geq u, \xi_2(s)<0\}} \bigr\}
\\
&&\qquad \leq\frac{1}{2\pi\underline{\sigma}_1\underline{\sigma}_2
(1-\overline{\rho}^2)^{1/2}} \int_u^\infty\exp \biggl
\{- \frac
{x_1^2}{2\overline{\sigma}_1^2} \biggr\} \,dx_1
\\
&&\qquad\quad{}\times \int_{-\infty}^0 \bigl(1+|x_1|^{N_1}
+ |x_2|^{N_2}\bigr) \exp \biggl\{ -\frac{1}{2\overline{\sigma}_2^2(1-\underline{\rho}^2)}
\biggl(x_2-\frac
{\underline{\sigma}_2 \underline{\rho} x_1}{\overline{\sigma}_1} \biggr)^2 \biggr\}
\,dx_2
\\
&&\qquad =o \biggl( \exp \biggl\{- \frac{u^2}{2\overline{\sigma}_1^2}-\frac{
\underline{\sigma}_2^2\underline{\rho}^2 u^2}{2\overline{\sigma
}_2^2(1-\underline{\rho}^2) \overline{\sigma}_1^2 }+\ep
u^2 \biggr\} \biggr), %
\end{eqnarray*}
as $u \to\infty$, for any $\ep>0$.
\end{pf*}

A similar argument for proving Lemma~\ref{Lem:unifrom posdef} yields
the following result.

%le6.1 #&#
%\setcounter{lemma}{0}
\begin{lemmaa} \label{Lem:eigenvalue}
Let $\{ A(t)=(a_{ij}(t))_{1\leq i,j\leq N}\dvtx t\in T\}$ be a family of
positive definite
matrices such that all elements $a_{ij}(\cdot)$ are continuous. Denote
by $\underline{x}$
and $\overline{x}$ the infimum and supremum of the eigenvalues of
$A(t)$ over $t\in T$,
respectively, then $0<\underline{x}\leq\overline{x}<\infty$.
\end{lemmaa}

The following two formulas state the results on the Laplace
approximation method.
Lemma~\ref{Lem:Laplace method 1} can be found in many books on the
approximations
of integrals; here we refer to \citet{W01}. Lemma~\ref{Lem:Laplace
method 2} can
be derived by following similar arguments in the proof of the Laplace method
for the case of boundary point in \citet{W01}.

%le6.2 #&#
\begin{lemmaa}[(Laplace method for
interior point)]\label{Lem:Laplace method 1}
Let $t_0$ be an interior point of $T$. Suppose the following conditions
hold: \textup{(i)}
$g(t) \in C(T)$ and $g(t_0) \neq0$; \textup{(ii)} $h(t) \in C^2(T)$ and attains
its unique
minimum at $t_0$; and \textup{(iii)} $\nabla^2 h(t_0)$ is positive definite.
Then as $u \to\infty$,
\[
\int_T g(t) e^{-uh(t)} \,dt =\frac{(2\pi)^{N/2}}{u^{N/2} (\operatorname{ det}
\nabla^2
h(t_0))^{1/2}}
g(t_0) e^{-uh(t_0)} \bigl(1+ o(1)\bigr).
\]
\end{lemmaa}

%le6.3 #&#
\begin{lemmaa}[(Laplace method for
boundary point)]\label{Lem:Laplace method 2}
Let $t_0\in J \in\partial_k T$ with $0\leq k\leq N-1$. Suppose that
conditions \textup{(i)},
\textup{(ii)} and \textup{(iii)} in Lemma~\ref{Lem:Laplace method 1} hold, and additionally
$\nabla h(t_0)=0$. Then as $u \to\infty$,
\[
\int_T g(t) e^{-uh(t)} \,dt =
\frac{(2\pi)^{N/2} \P\{Z_J(t_0)\in
(-E(J))\}
}{u^{N/2}
(\operatorname{ det} \nabla^2 h(t_0))^{1/2}}g(t_0) e^{-uh(t_0)} \bigl(1+ o(1)\bigr),
\]
where $Z_J(t_0)$ is a centered $(N-k)$-dimensional Gaussian vector with
covariance matrix
$(h_{ij}(t_0))_{J_1\leq i,j\leq J_{N-k}}$, $-E(J)=\{x\in\R^N\dvtx
-x\in
E(J)\}$, and the
definitions of $J_1, \ldots, J_{N-k}$ and $E(J)$ are in (\ref{Def:EJ}).
\end{lemmaa}
\end{appendix}

\section*{Acknowledgements} We thank the anonymous referees and
the Editor for
their insightful comments which have led to several improvements of
this manuscript.

%
% imsref loaded by akundreckaite, 2015-03-09 12:55:13
%

%
%\begin{appendix}
%\section{}
%\end{appendix}

% zodis "Acknowledgments" paliekamas pagal autoriu
%\section*{Acknowledgments}

%\begin{supplement}[id=suppA]
%\sname{Supplement A}
%\stitle{}
%\slink[doi]{10.1214/00-AAPXXXXSUPP} %[doi,text={...}] - jei reikia
%suskaldyti doi
%\sdatatype{.pdf}
%\sfilename{aapXXXX\_supp.pdf}
%\sdescription{}
%\end{supplement}

%\begin{thebibliography}{99}
%\bibitem[\protect\citeauthoryear{}{}]{r1}
%\bibitem{r1}
%\end{thebibliography}

\printaddresses

\begin{thebibliography}{27}
% pybtex-1.24. Style name=ims, version=2.91, label_style=nameyear,
%sorting_style=complex, cfg=None, language=None.

%b1 ###
%b1 #&#
\bibitem[\protect\citeauthoryear{Adler}{1981}]{Adler81}
%
\begin{bbook}[mr]
\bauthor{\bsnm{Adler},~\bfnm{Robert~J.}\binits{R.~J.}}
(\byear{1981}).
\btitle{The Geometry of Random Fields}.
\bpublisher{Wiley},
\blocation{Chichester}.
%\bnote{Wiley Series in Probability and Mathematical Statistics}.
\bid{mr={0611857}}
\end{bbook}
%
\iffalse\OrigBibText
R. J. Adler (1981), {\it The Geometry of Random Fields}. Wiley,
New York.
\endOrigBibText\fi
\bptok{imsref}%
% NOT OUTPUTTED:
% isbn = 0-471-27844-0
% fpage = xi+280
\endbibitem

%b2 ###
%b2 #&#
\bibitem[\protect\citeauthoryear{Adler}{2000}]{Adler00}
%
\begin{barticle}[mr]
\bauthor{\bsnm{Adler},~\bfnm{Robert~J.}\binits{R.~J.}}
(\byear{2000}).
\btitle{On excursion sets, tube formulas and maxima of random fields}.
\bjournal{Ann. Appl. Probab.}
\bvolume{10}
\bpages{1--74}.
\bid{doi={10.1214/aoap/1019737664}, issn={1050-5164}, mr={1765203}}
\end{barticle}
%
\iffalse\OrigBibText
R. J. Adler (2000), On excursion sets, tube formulas and maxima of
random fields.
{\it Ann. Appl. Probab.} {\bf10}, 1-74.
\endOrigBibText\fi
\bptok{imsref}%
% NOT OUTPUTTED:
% number = 1
% doi = http://dx.doi.org/10.1214/aoap/1019737664
% fjournal = The Annals of Applied Probability
\endbibitem

%b3 ###
%b3 #&#
\bibitem[\protect\citeauthoryear{Adler and Taylor}{2007}]{AT07}
%
\begin{bbook}[mr]
\bauthor{\bsnm{Adler},~\bfnm{Robert~J.}\binits{R.~J.}} \AND
\bauthor{\bsnm{Taylor},~\bfnm{Jonathan~E.}\binits{J.~E.}}
(\byear{2007}).
\btitle{Random Fields and Geometry}.
%\bseries{Springer Monographs in Mathematics}.
\bpublisher{Springer},
\blocation{New York}.
\bid{mr={2319516}}
\end{bbook}
%
\iffalse\OrigBibText
R. J. Adler and J. E. Taylor (2007), {\it Random Fields and Geometry}. Springer,
New York.
\endOrigBibText\fi
\bptok{imsref}%
% NOT OUTPUTTED:
% isbn = 978-0-387-48112-8
% fpage = xviii+448
\endbibitem

%b4 ###
%b4 #&#
\bibitem[\protect\citeauthoryear{Adler and Taylor}{2011}]{AdlerT11}
%
\begin{bbook}[mr]
\bauthor{\bsnm{Adler},~\bfnm{Robert~J.}\binits{R.~J.}} \AND
\bauthor{\bsnm{Taylor},~\bfnm{Jonathan~E.}\binits{J.~E.}}
(\byear{2011}).
\btitle{Topological Complexity of Smooth Random Functions}.
\bseries{Lecture Notes in Math.}
\bvolume{2019}.
\bpublisher{Springer},
\blocation{Heidelberg}.
%\bnote{Lectures from the 39th Probability Summer School held in
%Saint-Flour, 2009,
%{\'E}cole d'{\'E}t{\'e} de Probabilit{\'e}s de Saint-Flour.
%[Saint-Flour Probability Summer School]}.
\bid{doi={10.1007/978-3-642-19580-8}, mr={2768175}}
\end{bbook}
%
\iffalse\OrigBibText
R. J. Adler and J. E. Taylor (2011),
{\it Topological complexity of smooth random functions.}
Lecture Notes in Mathematics, 2019. \'Ecole d'\'Et\'e de Probabilit\'es
de Saint-Flour.
Springer, Heidelberg.
\endOrigBibText\fi
\bptok{imsref}%
% NOT OUTPUTTED:
% doi = http://dx.doi.org/10.1007/978-3-642-19580-8
% isbn = 978-3-642-19579-2
% fpage = viii+122
\endbibitem

%b5 ###
%b5 #&#
\bibitem[\protect\citeauthoryear{Adler, Taylor and Worsley}{2012}]{AdlerTW12}
%
\begin{bmisc}[auto:parserefs-M02]
\bauthor{\bsnm{Adler},~\bfnm{R.~J.}\binits{R.~J.}},
\bauthor{\bsnm{Taylor},~\bfnm{J.~E.}\binits{J.~E.}} \AND
\bauthor{\bsnm{Worsley},~\bfnm{K.~J.}\binits{K.~J.}}
(\byear{2012}).
\bhowpublished{\textit{Applications of Random Fields and Geometry: Foundations
and Case Studies}.
In preparation.}
\end{bmisc}
%
\iffalse\OrigBibText
R. J. Adler, J. E. Taylor and K. J. Worsley (2012), {\it Applications
of Random
Fields and Geometry: Foundations and Case Studies.}
{\it In preparation}.
\endOrigBibText\fi
\bptok{imsref}%
\endbibitem

%b6 ###
%b6 #&#
\bibitem[\protect\citeauthoryear{Aza{\"{\i}}s, Bardet and
Wschebor}{2002}]{ABW02}
%
\begin{barticle}[mr]
\bauthor{\bsnm{Aza{\"{\i}}s},~\bfnm{Jean-Marc}\binits{J.-M.}},
\bauthor{\bsnm{Bardet},~\bfnm{Jean-Marc}\binits{J.-M.}} \AND
\bauthor{\bsnm{Wschebor},~\bfnm{Mario}\binits{M.}}
(\byear{2002}).
\btitle{On the tails of the distribution of the maximum of a smooth
stationary {G}aussian process}.
\bjournal{ESAIM Probab. Stat.}
\bvolume{6}
\bpages{177--184 (electronic)}.
\bid{doi={10.1051/ps:2002010}, issn={1292-8100}, mr={1943146}}
\end{barticle}
%
\iffalse\OrigBibText
J.-M. Aza\"is, J.-M. Bardet and M. Wschebor (2002), On the tails of the
distribution
of the maximum of a smooth stationary Gaussian process. {\it ESAIM
Probab. Statist.}
{\bf6}, 177--184.
\endOrigBibText\fi
\bptok{imsref}%
% NOT OUTPUTTED:
% doi = http://dx.doi.org/10.1051/ps:2002010
% fjournal = European Series in Applied and Industrial Mathematics.
%Probability and Statistics
\endbibitem

%b7 ###
%b7 #&#
\bibitem[\protect\citeauthoryear{Aza{\"{\i}}s and Delmas}{2002}]{AD02}
%
\begin{barticle}[mr]
\bauthor{\bsnm{Aza{\"{\i}}s},~\bfnm{Jean-Marc}\binits{J.-M.}} \AND
\bauthor{\bsnm{Delmas},~\bfnm{C{\'e}line}\binits{C.}}
(\byear{2002}).
\btitle{Asymptotic expansions for the distribution of the maximum of
{G}aussian random fields}.
\bjournal{Extremes}
\bvolume{5}
\bpages{181--212}.
\bid{doi={10.1023/A:1022123321967}, issn={1386-1999}, mr={1965978}}
\end{barticle}
%
\iffalse\OrigBibText
J.-M. Aza\"is and C. Delmas (2002), Asymptotic expansions for the
distribution of the
maximum of Gaussian random fields. {\it Extremes.} {\bf5}, 181--212.
\endOrigBibText\fi
\bptok{imsref}%
% NOT OUTPUTTED:
% number = 2
% doi = http://dx.doi.org/10.1023/A:1022123321967
% fjournal = Extremes. Statistical Theory and Applications in Science,
%Engineering and Economics
\endbibitem

%b8 ###
%b8 #&#
\bibitem[\protect\citeauthoryear{Aza{\"{\i}}s and Wschebor}{2005}]{AW05}
%
\begin{barticle}[mr]
\bauthor{\bsnm{Aza{\"{\i}}s},~\bfnm{Jean-Marc}\binits{J.-M.}} \AND
\bauthor{\bsnm{Wschebor},~\bfnm{Mario}\binits{M.}}
(\byear{2005}).
\btitle{On the distribution of the maximum of a {G}aussian field with
{$d$} parameters}.
\bjournal{Ann. Appl. Probab.}
\bvolume{15}
\bpages{254--278}.
\bid{doi={10.1214/105051604000000602}, issn={1050-5164}, mr={2115043}}
\end{barticle}
%
\iffalse\OrigBibText
J.-M. Aza\"is and M. Wschebor (2005), On the distribution of the
maximum of a Gaussian
field with $d$ parameters. {\it Ann. Appl. Probab.} {\bf15}, 254--278.
\endOrigBibText\fi
\bptok{imsref}%
% NOT OUTPUTTED:
% number = 1A
% doi = http://dx.doi.org/10.1214/105051604000000602
% fjournal = The Annals of Applied Probability
\endbibitem

%b9 ###
%b9 #&#
\bibitem[\protect\citeauthoryear{Aza{\"{\i}}s and Wschebor}{2008}]{AzaisW08}
%
\begin{barticle}[mr]
\bauthor{\bsnm{Aza{\"{\i}}s},~\bfnm{Jean-Marc}\binits{J.-M.}} \AND
\bauthor{\bsnm{Wschebor},~\bfnm{Mario}\binits{M.}}
(\byear{2008}).
\btitle{A general expression for the distribution of the maximum of a
{G}aussian field and the approximation of the tail}.
\bjournal{Stochastic Process. Appl.}
\bvolume{118}
\bpages{1190--1218}.
\bid{doi={10.1016/j.spa.2007.07.016}, issn={0304-4149}, mr={2428714}}
\end{barticle}
%
\iffalse\OrigBibText
J.-M. Aza\"is and M. Wschebor (2008), A general expression for the
distribution of the maximum of a
Gaussian field and the approximation of the tail. {\it Stoch. Process. Appl.}
{\bf118}, 1190--1218.
\endOrigBibText\fi
\bptok{imsref}%
% NOT OUTPUTTED:
% number = 7
% doi = http://dx.doi.org/10.1016/j.spa.2007.07.016
% coden = STOPB7
% fjournal = Stochastic Processes and their Applications
\endbibitem

%b10 ###
%b10 #&#
\bibitem[\protect\citeauthoryear{Aza{\"{\i}}s and Wschebor}{2009}]{AW09}
%
\begin{bbook}[mr]
\bauthor{\bsnm{Aza{\"{\i}}s},~\bfnm{Jean-Marc}\binits{J.-M.}} \AND
\bauthor{\bsnm{Wschebor},~\bfnm{Mario}\binits{M.}}
(\byear{2009}).
\btitle{Level Sets and Extrema of Random Processes and Fields}.
\bpublisher{Wiley},
\blocation{Hoboken, NJ}.
\bid{doi={10.1002/9780470434642}, mr={2478201}}
\end{bbook}
%
\iffalse\OrigBibText
J.-M. Aza\"is and M. Wschebor (2009), { \it Level Sets and Extrema of
Random Processes
and Fields}. John Wiley \& Sons, Hoboken, NJ.
\endOrigBibText\fi
\bptok{imsref}%
% NOT OUTPUTTED:
% doi = http://dx.doi.org/10.1002/9780470434642
% isbn = 978-0-470-40933-6
% fpage = xii+393
\endbibitem

%b11 ###
%b11 #&#
\bibitem[\protect\citeauthoryear{Cheng}{2013}]{ChengPhD}
%
\begin{bmisc}[mr]
\bauthor{\bsnm{Cheng},~\bfnm{Dan}\binits{D.}}
(\byear{2013}).
\bhowpublished{The excursion probability of {G}aussian and
asymptotically {G}aussian random fields.
Ph.D. thesis, Michigan State Univ.,
ProQuest LLC, Ann Arbor, MI.}
\bid{mr={3187373}}
\end{bmisc}
%
\iffalse\OrigBibText
D. Cheng (2013), {\it Excursion Probability of Gaussian and
Asymptotically Gaussian
Random Fields}. Ph.D. Thesis, Michigan State University.
\endOrigBibText\fi
\bptok{imsref}%
% NOT OUTPUTTED:
% isbn = 978-1303-28687-2
% url =
%http://gateway.proquest.com/openurl?url_ver=Z39.88-2004&rft_val_fmt=info:ofi/fmt:kev:mtx:dissertation&res_dat=xri:pqm&rft_dat=xri:pqdiss:3589663
% fpage = 261
\endbibitem

%b12 ###
%b12 #&#
\bibitem[\protect\citeauthoryear{Cram{\'e}r and Leadbetter}{1967}]{CramerL67}
%
\begin{bbook}[mr]
\bauthor{\bsnm{Cram{\'e}r},~\bfnm{Harald}\binits{H.}} \AND
\bauthor{\bsnm{Leadbetter},~\bfnm{M.~R.}\binits{M.~R.}}
(\byear{1967}).
\btitle{Stationary and Related Stochastic Processes. {S}ample Function
Properties and Their Applications}.
\bpublisher{Wiley},
\blocation{New York}.
\bid{mr={0217860}}
\end{bbook}
%
\iffalse\OrigBibText
H. Cram\'er and M. R. Leadbetter (1967), {\it Stationary and Related
Stochastic Processes.} Wiley, New York.
\endOrigBibText\fi
\bptok{imsref}%
% NOT OUTPUTTED:
% fpage = xii+348
\endbibitem

%b13 ###
%b13 #&#
\bibitem[\protect\citeauthoryear{Cuzick}{1977}]{Cuzick77}
%
\begin{barticle}[mr]
\bauthor{\bsnm{Cuzick},~\bfnm{Jack}\binits{J.}}
(\byear{1977}).
\btitle{A lower bound for the prediction error of stationary
{G}aussian processes}.
\bjournal{Indiana Univ. Math. J.}
\bvolume{26}
\bpages{577--584}.
\bid{issn={0022-2518}, mr={0438452}}
\end{barticle}
%
\iffalse\OrigBibText
J. Cuzick (1977), A lower bound for the prediction error of stationary
Gaussian processes.
{\it Indiana Univ. Math. J.} {\bf26}, 577--584.
\endOrigBibText\fi
\bptok{imsref}%
% NOT OUTPUTTED:
% number = 3
% fjournal = Indiana University Mathematics Journal
\endbibitem

%b14 ###
%b14 #&#
\bibitem[\protect\citeauthoryear{Matheron}{1973}]{Matheron73}
%
\begin{barticle}[mr]
\bauthor{\bsnm{Matheron},~\bfnm{G.}\binits{G.}}
(\byear{1973}).
\btitle{The intrinsic random functions and their applications}.
\bjournal{Adv. in Appl. Probab.}
\bvolume{5}
\bpages{439--468}.
\bid{issn={0001-8678}, mr={0356209}}
\end{barticle}
%
\iffalse\OrigBibText
G. Matheron (1973), The intrinsic random functions and their applications.
{\it Adv. Appl. Probab.} {\bf5}, 439--468.
\endOrigBibText\fi
\bptok{imsref}%
% NOT OUTPUTTED:
% fjournal = Advances in Applied Probability
\endbibitem

%b15 ###
%b15 #&#
\bibitem[\protect\citeauthoryear{Piterbarg}{1996a}]{Piterbarg96book}
%
\begin{bbook}[mr]
\bauthor{\bsnm{Piterbarg},~\bfnm{Vladimir~I.}\binits{V.~I.}}
(\byear{1996}a).
\btitle{Asymptotic Methods in the Theory of {G}aussian Processes and Fields}.
\bseries{Translations of Mathematical Monographs}
\bvolume{148}.
\bpublisher{Amer. Math. Soc.},
\blocation{Providence, RI}.
%\bnote{Translated from the Russian by V. V. Piterbarg, Revised by the
%author}.
\bid{mr={1361884}}
\end{bbook}
%
\iffalse\OrigBibText
V. I. Piterbarg (1996a), {\it Asymptotic Methods in the Theory of
Gaussian Processes and Fields}.
Translations of Mathematical Monographs, Vol. 148, American
Mathematical Society, Providence, RI.
\endOrigBibText\fi
\bptok{imsref}%
% NOT OUTPUTTED:
% isbn = 0-8218-0423-5
% fpage = xii+206
\endbibitem

%b16 ###
%b16 #&#
\bibitem[\protect\citeauthoryear{Piterbarg}{1996b}]{Piterbarg96}
%
\begin{bmisc}[auto:parserefs-M02]
\bauthor{\bsnm{Piterbarg},~\bfnm{V.~I.}\binits{V.~I.}}
(\byear{1996}b).
\bhowpublished{Rice's method for large excursions of Gaussian random fields.
Technical Report No. 478,
Center for Stochastic Processes, Univ. North Carolina,  Chapel Hill, NC}.
\end{bmisc}
%
\iffalse\OrigBibText
V. I. Piterbarg (1996b), Rice's method for large excursions of Gaussian
random fields.
Technical Report NO. 478, Center for Stochastic Processes, Univ. North Carolina.
\endOrigBibText\fi
\bptok{imsref}%
\endbibitem

%b17 ###
%b17 #&#
\bibitem[\protect\citeauthoryear{Potthoff}{2010}]{Potthoff10}
%
\begin{barticle}[mr]
\bauthor{\bsnm{Potthoff},~\bfnm{J{\"u}rgen}\binits{J.}}
(\byear{2010}).
\btitle{Sample properties of random fields III: Differentiability}.
\bjournal{Commun. Stoch. Anal.}
\bvolume{4}
\bpages{335--353}.
\bid{issn={0973-9599}, mr={2668626}}
\end{barticle}
%
\iffalse\OrigBibText
J. Potthoff (2010), Sample properties of random fields III:
differentiability. {\it Commun. Stoch. Anal.}
{\bf4}, 335--353.
\endOrigBibText\fi
\bptok{imsref}%
% NOT OUTPUTTED:
% number = 3
% fjournal = Communications on Stochastic Analysis
\endbibitem

%b18 ###
%b18 #&#
\bibitem[\protect\citeauthoryear{Stein}{1999}]{Stein99}
%
\begin{bbook}[mr]
\bauthor{\bsnm{Stein},~\bfnm{Michael~L.}\binits{M.~L.}}
(\byear{1999}).
\btitle{Interpolation of Spatial Data: Some Theory for Kriging}.
%\bseries{Springer Series in Statistics}.
\bpublisher{Springer},
\blocation{New York}.
\bid{doi={10.1007/978-1-4612-1494-6}, mr={1697409}}
\end{bbook}
%
\iffalse\OrigBibText
M. L. Stein (1999), {\it Interpolation of Spatial Data: Some
Theory for Kriging}. Springer, New York.
\endOrigBibText\fi
\bptok{imsref}%
% NOT OUTPUTTED:
% doi = http://dx.doi.org/10.1007/978-1-4612-1494-6
% isbn = 0-387-98629-4
% fpage = xviii+247
\endbibitem

%b19 ###
%b19 #&#
\bibitem[\protect\citeauthoryear{Stein}{2013}]{Stein12}
%
\begin{barticle}[mr]
\bauthor{\bsnm{Stein},~\bfnm{Michael~L.}\binits{M.~L.}}
(\byear{2013}).
\btitle{On a class of space--time intrinsic random functions}.
\bjournal{Bernoulli}
\bvolume{19}
\bpages{387--408}.
\bid{doi={10.3150/11-BEJ405}, issn={1350-7265}, mr={3037158}}
\end{barticle}
%
\iffalse\OrigBibText
M. L. Stein (2013), On a class of space-time intrinsic random
functions. {\it Bernoulli} {\bf19}, 387--408.
\endOrigBibText\fi
\bptok{imsref}%
% NOT OUTPUTTED:
% number = 2
% doi = http://dx.doi.org/10.3150/11-BEJ405
% fjournal = Bernoulli. Official Journal of the Bernoulli Society for
%Mathematical Statistics and Probability
\endbibitem

%b20 ###
%b20 #&#
\bibitem[\protect\citeauthoryear{Sun}{1993}]{Sun93}
%
\begin{barticle}[mr]
\bauthor{\bsnm{Sun},~\bfnm{Jiayang}\binits{J.}}
(\byear{1993}).
\btitle{Tail probabilities of the maxima of {G}aussian random fields}.
\bjournal{Ann. Probab.}
\bvolume{21}
\bpages{34--71}.
\bid{issn={0091-1798}, mr={1207215}}
\end{barticle}
%
\iffalse\OrigBibText
J. Sun (1993), Tail probabilities of the maxima of Gaussian random
fields. {\it Ann. Probab.}
{\bf21}, 34--71.
\endOrigBibText\fi
\bptok{imsref}%
% NOT OUTPUTTED:
% url =
%http://links.jstor.org/sici?sici=0091-1798(199301)21:1<34:TPOTMO>2.0.CO;2-F&origin=MSN
% number = 1
% coden = APBYAE
% fjournal = The Annals of Probability
\endbibitem

%b21 ###
%b21 #&#
\bibitem[\protect\citeauthoryear{Takemura and Kuriki}{2002}]{TakKurik02}
%
\begin{barticle}[mr]
\bauthor{\bsnm{Takemura},~\bfnm{Akimichi}\binits{A.}} \AND
\bauthor{\bsnm{Kuriki},~\bfnm{Satoshi}\binits{S.}}
(\byear{2002}).
\btitle{On the equivalence of the tube and {E}uler characteristic
methods for the distribution of the maximum of {G}aussian fields over
piecewise smooth domains}.
\bjournal{Ann. Appl. Probab.}
\bvolume{12}
\bpages{768--796}.
\bid{doi={10.1214/aoap/1026915624}, issn={1050-5164}, mr={1910648}}
\end{barticle}
%
\iffalse\OrigBibText
A. Takemura and S. Kurki (2002), On the equivalence of the tube and
Euler characteristic
methods for the distribution of the maximum of Gaussian fields over
piecewise smooth
domains. {\it Ann. Appl. Probab.} {\bf12}, 768--796.
\endOrigBibText\fi
\bptok{imsref}%
% NOT OUTPUTTED:
% number = 2
% doi = http://dx.doi.org/10.1214/aoap/1026915624
% fjournal = The Annals of Applied Probability
\endbibitem

%b22 ###
%b22 #&#
\bibitem[\protect\citeauthoryear{Taylor and Adler}{2003}]{TTA03}
%
\begin{barticle}[mr]
\bauthor{\bsnm{Taylor},~\bfnm{Jonathan~E.}\binits{J.~E.}} \AND
\bauthor{\bsnm{Adler},~\bfnm{Robert~J.}\binits{R.~J.}}
(\byear{2003}).
\btitle{Euler characteristics for {G}aussian fields on manifolds}.
\bjournal{Ann. Probab.}
\bvolume{31}
\bpages{533--563}.
\bid{doi={10.1214/aop/1048516527}, issn={0091-1798}, mr={1964940}}
\end{barticle}
%
\iffalse\OrigBibText
J. E. Taylor and R. J. Adler (2003), Euler characteristics for Gaussian
fields on manifolds.
{\it Ann. Probab.} {\bf31}, 533--563.
\endOrigBibText\fi
\bptok{imsref}%
% NOT OUTPUTTED:
% number = 2
% doi = http://dx.doi.org/10.1214/aop/1048516527
% coden = APBYAE
% fjournal = The Annals of Probability
\endbibitem

%b23 ###
%b23 #&#
\bibitem[\protect\citeauthoryear{Taylor, Takemura and Adler}{2005}]{TTA05}
%
\begin{barticle}[mr]
\bauthor{\bsnm{Taylor},~\bfnm{Jonathan}\binits{J.}},
\bauthor{\bsnm{Takemura},~\bfnm{Akimichi}\binits{A.}} \AND
\bauthor{\bsnm{Adler},~\bfnm{Robert~J.}\binits{R.~J.}}
(\byear{2005}).
\btitle{Validity of the expected {E}uler characteristic heuristic}.
\bjournal{Ann. Probab.}
\bvolume{33}
\bpages{1362--1396}.
\bid{doi={10.1214/009117905000000099}, issn={0091-1798}, mr={2150192}}
\end{barticle}
%
\iffalse\OrigBibText
J. E. Taylor, A. Takemura and R. J. Adler (2005), Validity of the
expected Euler
characteristic heuristic. {\it Ann. Probab.} {\bf33}, 1362--1396.
\endOrigBibText\fi
\bptok{imsref}%
% NOT OUTPUTTED:
% number = 4
% doi = http://dx.doi.org/10.1214/009117905000000099
% coden = APBYAE
% fjournal = The Annals of Probability
\endbibitem

%b24 ###
%b24 #&#
\bibitem[\protect\citeauthoryear{Wong}{2001}]{W01}
%
\begin{bbook}[mr]
\bauthor{\bsnm{Wong},~\bfnm{R.}\binits{R.}}
(\byear{2001}).
\btitle{Asymptotic Approximations of Integrals}.
\bseries{Classics in Applied Mathematics}
\bvolume{34}.
\bpublisher{SIAM},
\blocation{Philadelphia, PA}.
%\bnote{Corrected reprint of the 1989 original}.
\bid{doi={10.1137/1.9780898719260}, mr={1851050}}
\end{bbook}
%
\iffalse\OrigBibText
R. Wong (2001), {\it Asymptotic Approximations of Integrals}.
SIAM, Philadelphia, PA.
\endOrigBibText\fi
\bptok{imsref}%
% NOT OUTPUTTED:
% doi = http://dx.doi.org/10.1137/1.9780898719260
% isbn = 0-89871-497-4
% fpage = xviii+543
\endbibitem

%b25 ###
%b25 #&#
\bibitem[\protect\citeauthoryear{Xiao}{2009}]{X09}
%
\begin{bincollection}[mr]
\bauthor{\bsnm{Xiao},~\bfnm{Yimin}\binits{Y.}}
(\byear{2009}).
\btitle{Sample path properties of anisotropic {G}aussian random fields}.
In \bbooktitle{A Minicourse on Stochastic Partial Differential Equations}
(\beditor{\binits{D.}\bfnm{D.} \bsnm{Khoshnevisan}}
\AND
\beditor{\binits{F.}\bfnm{F.} \bsnm{Rassoul-Agha}}, eds.).
\bseries{Lecture Notes in Math.}
\bvolume{1962}
\bpages{145--212}.
\bpublisher{Springer},
\blocation{Berlin}.
\bid{doi={10.1007/978-3-540-85994-9_5}, mr={2508776}}
\end{bincollection}
%
\iffalse\OrigBibText
Y. Xiao (2009), Sample path properties of anisotropic Gaussian random fields.
In: {\it A Minicourse on Stochastic Partial Differential Equations},
(D. Khoshnevisan and F. Rassoul-Agha, editors), {\it Lecture Notes in Math.}
{\bf1962}, pages 145--212.
Springer, New York.
\endOrigBibText\fi
\bptok{imsref}%
% NOT OUTPUTTED:
% doi = http://dx.doi.org/10.1007/978-3-540-85994-9_5
\endbibitem

%b26 ###
%b26 #&#
\bibitem[\protect\citeauthoryear{Xue and Xiao}{2011}]{XX09}
%
\begin{barticle}[mr]
\bauthor{\bsnm{Xue},~\bfnm{Yun}\binits{Y.}} \AND
\bauthor{\bsnm{Xiao},~\bfnm{Yimin}\binits{Y.}}
(\byear{2011}).
\btitle{Fractal and smoothness properties of space--time {G}aussian models}.
\bjournal{Front. Math. China}
\bvolume{6}
\bpages{1217--1248}.
\bid{doi={10.1007/s11464-011-0126-9}, issn={1673-3452}, mr={2862654}}
\end{barticle}
%
\iffalse\OrigBibText
Y. Xue and Y. Xiao (2011), Fractal and smoothness properties of
space-time Gaussian models.
{\it Frontiers Math. China} {\bf6}, 1217--1246.
\endOrigBibText\fi
\bptok{imsref}%
% NOT OUTPUTTED:
% number = 6
% doi = http://dx.doi.org/10.1007/s11464-011-0126-9
% fjournal = Frontiers of Mathematics in China
\endbibitem

%b27 ###
%b27 #&#
\bibitem[\protect\citeauthoryear{Yaglom}{1957}]{Yaglom57}
%
\begin{barticle}[auto:parserefs-M02]
\bauthor{\bsnm{Yaglom},~\bfnm{A.~M.}\binits{A.~M.}}
(\byear{1957}).
\btitle{Some classes of random fields in $n$-dimensional space,
related to stationary random processes}.
\bjournal{Theory Probab. Appl.}
\bvolume{2}
\bpages{273--320}.
\end{barticle}
%
\iffalse\OrigBibText
A. M. Yaglom (1957), Some classes of random fields in
$n$-dimensional space, related to stationary random processes. {\it
Th. Probab. Appl.} {\bf2}, 273--320.
\endOrigBibText\fi
\bptok{imsref}%
\endbibitem
\end{thebibliography}
\end{document}